\newtheorem{theorem}{\sc Theorem}[section]
\newtheorem{proposition}[theorem]{\sc Proposition}
\newtheorem{lemma}[theorem]{\sc Lemma}
\newtheorem{corollary}[theorem]{\sc Corollary}
\theoremstyle{definition}
\newtheorem{definition}[theorem]{\sc Definition}
\newtheorem{example}[theorem]{\sc Example}
\theoremstyle{remark}
\newtheorem{remark}[theorem]{\sc Remark}
\newtheorem{remarks}[theorem]{\sc Remarks}
\def\x1{x_1}
\def\x2{x_2}
\def\a1{a_1}
\def\a2{a_2}
\begin{document}
\title[Cocycle deformations]{Cocycle deformations for Hopf algebras with a
coalgebra projection}

\author{Alessandro Ardizzoni}
\address{Department of Mathematics, University of Ferrara, Via Machiavelli
35, Ferrara I-44121, Italy} \email{alessandro.ardizzoni@unife.it}
\urladdr{http://www.unife.it/utenti/alessandro.ardizzoni}

\author{Margaret Beattie}
\address{Department of Mathematics and Computer Science, Mount Allison
University, Sackville, NB E4L 1E6, Canada} \email{mbeattie@mta.ca}
\urladdr{http://www.mta.ca/~mbeattie/}

\author{Claudia Menini}
\address{Department of Mathematics, University of Ferrara, Via Machiavelli
35, Ferrara I-44121, Italy} \email{men@dns.unife.it}
\urladdr{http://www.unife.it/utenti/claudia.menini}

\thanks{M. Beattie's research was supported by an NSERC Discovery Grant.
Thanks to the University of Ferrara and to Mount Allison
University for the warm hospitality extended to M. Beattie and to
A. Ardizzoni during their visits of 2007 and 2008 respectively.
This paper was written while A. Ardizzoni and C. Menini were
members of G.N.S.A.G.A. with partial financial support from
M.I.U.R. (PRIN 2007).}

\subjclass{  Primary 16W30; Secondary 16S40}

\begin{abstract}
Let $H$ be a Hopf algebra over a field $K$ of characteristic $0$
and let $A$ be a bialgebra or Hopf algebra such that $H$ is
isomorphic to a sub-Hopf algebra of $A$ and there is an
$H$-bilinear coalgebra projection $\pi$ from $A$ to $H$ which
splits the inclusion. Then $A \cong R \#_\xi H$ where $R$ is the
pre-bialgebra of coinvariants. In this paper we study the
deformations of $A$ by an $H$-bilinear cocycle. If $\gamma$ is a
cocycle for $A$, then $\gamma$ can be restricted to a cocycle
$\gamma_R$ for $R$, and $A^\gamma \cong R^{\gamma_R}
\#_{\xi_\gamma} H$. As examples, we consider liftings of
$\mathcal{B}(V) \# K[\Gamma]$ where $\Gamma$ is a finite abelian
group, $V$ is a quantum plane and $\mathcal{B}(V)$ is its Nichols
algebra, and explicitly construct the cocycle which twists the
Radford biproduct into the lifting.
\end{abstract}

\keywords{Hopf algebra; coalgebra projection; Radford biproduct;
cocycle twist.}

\maketitle \tableofcontents

\section{Introduction}

Let $A$, $H$ be Hopf algebras over a field $K$ of characteristic 0 and
suppose that $\sigma :H\hookrightarrow A$ embeds $H$ as a sub-Hopf algebra
of $A$. If there is a Hopf algebra projection $\pi :A\rightarrow H$ such
that $\pi \circ \sigma $ is the identity, then $A$ is isomorphic to a
Radford biproduct $R\#H$ \cite{Rad} of the algebra of co-invariants $%
R=A^{co\pi }$ and the Hopf algebra $H$. In this setting $R$ is not a
sub-Hopf algebra of $A$ but is a Hopf algebra in the Yetter-Drinfeld
category $_{H}^{H}\mathcal{YD}$.

Suppose $H$ is a Hopf algebra over $K$, $A$ a bialgebra, $\sigma: H
\hookrightarrow A$ a bialgebra embedding and $\pi$ an $H$-bilinear coalgebra
homomorphism from $A$ to $H$ that splits $\sigma$. Then the 4-tuple $%
(A,H,\pi,\sigma)$ is called a splitting datum. If $\pi$ is an algebra
homomorphism, then $A$ is a Radford biproduct as above. More generally, $A =
R \#_\xi H$, where $R$ is the set of $\pi$-coinvariants and is a coalgebra
in $_{H}^{H}\mathcal{YD}$ which is not a bialgebra but what was termed in
\cite{A.M.S.} a pre-bialgebra with cocycle $\xi$. If $\pi$ is only left $H$%
-linear, then $\pi$ was called a weak projection by Schauenburg \cite{Scha};
he showed that bicrossproducts, double crossproducts and all quantized
universal enveloping algebras are examples of this situation.

Using the machinery from \cite{A.M.S.}, \cite{A.M.St.-Small} and \cite%
{AM-Small2}, we explore the relationship between the associated
pre-bialgebras for a splitting datum $(A,H,\pi,\sigma)$ and the splitting
datum $(A^\gamma, H,\pi,\sigma)$ where $A^\gamma$ is a cocycle deformation
of $A$. Cocycle deformations of Hopf algebras are of interest in the problem
of the classification of Hopf algebras. In particular, it has recently been
proved (see \cite[Theorem 4.3]{Grunenfelder-Mastnak} and \cite[Appendix]%
{Mas2}) that the families of finite-dimensional pointed Hopf algebras with
the same associated graded Hopf algebra $\mathcal{B}(V) \# K[\Gamma]$
classified by Andruskiewitsch and Schneider in \cite{AS2} are cocycle
deformations of a Radford biproduct. We define the notion of a cocycle twist
for a pre-bialgebra $(R,\xi)$ and show that given a splitting datum as
above, if $A^\gamma$ is a cocycle twist of $A$ then $A^\gamma \cong
R^{\gamma_R} \#_{\xi_{\gamma}} H$ where $R^{\gamma_R}$ is a cocycle twist of
$R$.

This paper is organized in the following way. In a preliminary section we
first recall basic facts about coalgebras in the Yetter-Drinfeld category $%
_{H}^{H} \mathcal{YD}$, prove some key lemmas, and review the basic theory
of pre-bialgebras with a cocycle in $_{H}^{H} \mathcal{YD}$ from \cite%
{A.M.S.} and \cite{A.M.St.-Small}, ending with some examples. In general a
pre-bialgebra with cocycle $(R,\xi)$ does not have associative
multiplication for $\xi$ nontrivial. In Section \ref{sec: xi} we show that
if $R$ is connected, the sufficient conditions for $(R,\xi)$ to have
associative multiplication from Section \ref{sec: pre-bialg} are also
necessary. Section \ref{sec: main} contains the main results of this paper.
Here we review the notion of a cocycle twist for a Hopf algebra $A$, and
define cocycle twists for pre-bialgebras. We show that for $(R,\xi)$ a
pre-bialgebra with cocycle associated to a splitting datum $(A,H,\pi,
\sigma) $, then the set of $H$-bilinear cocycles on $A$ is in one-one
correspondence with the left $H$-linear cocycles on $R$. Furthermore, a
cocycle twist of $A$, say $A^\gamma \cong (R \# _\xi H)^\gamma $, is
isomorphic to $R^{\gamma_R} \#_{\xi_\gamma} H$ where $\gamma_R$ is the
cocycle on $R \otimes R$ corresponding to the cocycle $\gamma$ for $A$ and $%
(R^{\gamma_R}, \xi_\gamma) $ is the pre-bialgebra with cocycle corresponding
to $A^\gamma$. In Section \ref{sec: qlps}, we explicitly describe the
cocycle which twists the Radford product $\mathcal{B}(V) \# K[\Gamma]$ of
the group algebra of a finite abelian group and the Nichols algebra of a
quantum plane to the liftings of this pointed Hopf algebra. Examples include
the three families of non-isomorphic pointed Hopf algebras of dimension $32$
described in \cite{grana} and the pointed Hopf algebras of dimension $81$
which were among the first counterexamples to Kaplansky's Tenth Conjecture.

Throughout $H$ will denote a Hopf algebra over a field $K$. All maps are
assumed to be over $K$. We assume for simplicity of the exposition that our
ground field $K$ has characteristic zero. Anyway we point out that many
results below are valid under weaker hypotheses.

\section{Preliminaries}

We will use the Heyneman-Sweedler notation for the comultiplication in a $K$%
-coalgebra $C$ but with the summation sign omitted, namely $\Delta
(x)=x_{(1)}\otimes x_{(2)}$ for $x \in C$. For $C$ a coalgebra and $A$ an
algebra the convolution multiplication in $\mathrm{Hom}(C,A)$ will be
denoted $*$. Composition of functions will be denoted by $\circ$ or possibly
by juxtaposition when the meaning is clear.

A Hopf algebra $H$ is a left $H$-module under the adjoint action $h
\rightharpoonup m = h_{(1)}m S(h_{(2)})$ and has a similar right adjoint
action. Recall \cite[Definition 2.7]{A.M.S.} that a left and right integral $%
\lambda \in H^*$ for $H$ is called ad-invariant if $\lambda(1) = 1$ and $%
\lambda$ is a left and right $H$-module map with respect to the left and
right adjoint actions. If $H$ is semisimple and cosemisimple, then the total
integral for $H$ is ad-invariant; see either \cite[Proposition 1.12, b)]{SvO}
or \cite[Theorem 2.27]{A.M.S.}. If $H$ has an ad-invariant integral, then $H$
is cosemisimple.

We assume familiarity with the general theory of Hopf algebras; good
references are \cite{Sw}, \cite{Mo}.

\subsection{Coalgebras in a category of Yetter-Drinfeld modules}

\label{sec: ydcoalgebras}

Let $H$ be a Hopf algebra over $K$. Coalgebras in $_{H}^{H} \mathcal{YD}$,
the category of left-left Yetter-Drinfeld modules over $H$, will play a
central role in this paper. For $(V, \cdot)$ a left $H$-module, we write $hv$
for $h \cdot v$, the action of $h$ on $v$, if the meaning is clear. The left
$H$-module $H$ with the left adjoint action is denoted $(H,\rightharpoonup)$%
; the left and right actions of $H$ on $H$ induced by multiplication will be
denoted by juxtaposition. For $(V, \rho)$ a left $H$-comodule, for $v \in V$
we write $\rho(v) = v_{\langle -1 \rangle} \otimes v_{\langle 0 \rangle}$
for the coaction. Recall that if $V$ is a left $H$-module and a left $H$%
-comodule, then $V$ is an object in $_{H}^{H} \mathcal{YD}$ if for all $v
\in V$, $h \in H$,
\begin{equation*}
\rho(h\cdot v) = h_{(1)}v_{\langle -1 \rangle}S(h_{(3)}) \otimes h_{(2)}
\cdot v_{\langle 0 \rangle}.
\end{equation*}
The field $K$ is a left-left Yetter-Drinfeld module with $\rho(1) = 1
\otimes 1$ and trivial left $H$ action. As well, $(H, \rightharpoonup,
\Delta_H)$ is a left-left Yetter-Drinfeld module. If $V,W$ are objects in $%
_{H}^{H} \mathcal{YD}$, so is $V \otimes W$ with $H$ action given by $h(r
\otimes t) = h_{(1)}r \otimes h_{(2)}t$, and $H$-coaction given by $\rho(r
\otimes t) = r_{\langle -1 \rangle}t_{\langle -1 \rangle} \otimes r_{\langle
0 \rangle} \otimes t_{\langle 0 \rangle}$ for all $r \in V$, $t \in W$, $h
\in H$. A map $f \in \mathrm{Hom}(V,W)$ is called (left) $H$-linear if $f(h
\cdot v) = h \cdot f(v)$, for all $h\in H,v\in V$. For example $f \in
\mathrm{Hom}(V,K)$ is left $H$-linear if $f(h \cdot v ) = \varepsilon(h)f(v)$
and $f \in \mathrm{Hom}(V,H)$ is left $H$-linear if $f(h \cdot v) =
h_{(1)}f(v) S(h_{(2)})$. The category $_{H}^{H} \mathcal{YD}$ is braided
with braiding $c_{V,W}: V\otimes W \rightarrow W \otimes V$ given by $%
c_{V,W}(v \otimes w) = v_{\langle -1 \rangle}w \otimes v_{\langle 0\rangle}$.

For $C$ a coalgebra in $_{H}^{H}\mathcal{YD}$, we use a modified version of
the Heyneman-Sweedler notation, writing superscripts instead of subscripts,
so that comultiplication is written
\begin{equation*}
\Delta _{C}(x)=\Delta (x)\text{ }=x^{(1)}\otimes x^{(2)}\text{, for every }%
x\in C\text{.}
\end{equation*}

If $C$ and $D$ are coalgebras in $_{H}^{H} \mathcal{YD}$, so is $C
\underline{\otimes} D$ defined as follows. As a Yetter-Drinfeld module, $C
\underline{\otimes} D = C \otimes D$ with $H$-action and coaction as
described above. The counit is $\varepsilon_{C \underline{\otimes} D} =
\varepsilon_C \otimes \varepsilon_D$ and the comultiplication is $\Delta_{C
\underline{\otimes} D} = (C \otimes c_{C,D} \otimes D) \circ (\Delta_C
\otimes \Delta_D)$ , so that
\begin{eqnarray*}  \label{form: DeltaRR}
\Delta_{C \underline{\otimes} D}(x \otimes y) &=& (x^{(1)} \otimes
x^{(2)}_{\langle -1 \rangle }y^{(1)}) \otimes (x^{(2)}_{\langle0\rangle}
\otimes y^{(2)}). \\
\Delta _{C\underline{\otimes} D\underline{\otimes} E}\left( x\otimes
y\otimes z\right) &=& (x^{\left( 1\right) }\otimes x_{\left\langle
-2\right\rangle }^{\left( 2\right) }y^{\left( 1\right) }\otimes
x_{\left\langle -1\right\rangle }^{\left( 2\right) }y_{\left\langle
-1\right\rangle }^{\left( 2\right) }z^{\left( 1\right) }) \otimes
(x_{\left\langle 0\right\rangle }^{\left( 2\right) }\otimes y_{\left\langle
0\right\rangle }^{\left( 2\right) }\otimes z^{\left( 2\right) }).
\end{eqnarray*}
When it is clear from the context (and from the superscript versus subscript
notation) $\underline{\otimes}$ is written simply as $\otimes$. \vspace{2mm}

For a $K$-coalgebra $C$ and a map $u_C: K \rightarrow C$, the coalgebra $%
(C,u_C)$ is called \textbf{coaugmented} if $1_C := u_C(1_K)$ is a grouplike
element. For $C$ a coalgebra in $_{H}^{H} \mathcal{YD}$, then $u_C$ is also
required to be a map in the Yetter-Drinfeld category, i.e., for all $h \in H$%
,
\begin{equation}
{h}\cdot 1_C =\varepsilon _{H}(h)1_C \qquad \text{and}\qquad \rho _{C}(1_C)
=1_{H}\otimes 1_C.  \label{eq:YD0'}
\end{equation}
A coaugmented coalgebra $C$ is called connected if $C_0 = K1_C$.

\vspace{1mm}

The next definitions and lemmas will be key in later computations.

\begin{definition}
\label{def: Psi} For $M$ a left $H$-comodule, define $\Psi: \mathrm{Hom}%
(M,K) \rightarrow \mathrm{Hom}^{H,-}(M,H)$, the left $H$-comodule maps from $%
M$ to $H$, by
\begin{equation*}
\Psi \left( \alpha \right) =\left( H\otimes \alpha \right) \rho _{M}.
\end{equation*}
\end{definition}

\begin{remark}
\label{rem: hope1.8} Let $f: M \rightarrow L$ be a morphism of left $H$%
-comodules and $\alpha \in \mathrm{Hom}(L,K)$. Then $\Psi \left( \alpha
\right) \circ f =\Psi \left( \alpha \circ f \right).$ To see this, let $m
\in M$ and then
\begin{equation*}
(\Psi(\alpha)\circ f) (m) = \Psi(\alpha)(f(m)) = f(m)_{\left\langle
-1\right\rangle }\alpha(f(m)_{\left\langle 0\right\rangle }) =
m_{\left\langle -1\right\rangle }\alpha(f(m_{\left\langle 0\right\rangle }))
= \Psi(\alpha \circ f)(m).
\end{equation*}
\end{remark}

\begin{lemma}
\label{lem: Psi} For $C$ a left $H$-comodule coalgebra, $\Psi :\mathrm{Hom}%
\left( C,K\right) \rightarrow \mathrm{Hom}^{H,-}\left( C,H\right)$ is an
algebra isomorphism.
\end{lemma}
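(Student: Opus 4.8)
The plan is to verify that $\Psi$ is a well-defined, unital, multiplicative bijection, handling each property in turn and exhibiting an explicit inverse. First I would confirm that $\Psi(\alpha)$ genuinely lands in $\mathrm{Hom}^{H,-}(C,H)$: writing $\Psi(\alpha)(c)=c_{\langle -1\rangle}\alpha(c_{\langle 0\rangle})$ and using comodule coassociativity in the form $(c_{\langle -1\rangle})_{(1)}\otimes (c_{\langle -1\rangle})_{(2)}\otimes c_{\langle 0\rangle}=c_{\langle -1\rangle}\otimes (c_{\langle 0\rangle})_{\langle -1\rangle}\otimes (c_{\langle 0\rangle})_{\langle 0\rangle}$, one sees at once that $\rho_H\circ\Psi(\alpha)=(H\otimes\Psi(\alpha))\circ\rho_C$, so $\Psi(\alpha)$ is left $H$-colinear. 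This step uses only that $C$ is a comodule, matching the generality of Definition \ref{def: Psi}, and linearity of $\Psi$ in $\alpha$ is immediate.

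Next I would pin down the two algebra structures: on $\mathrm{Hom}(C,K)$ and on $\mathrm{Hom}^{H,-}(C,H)$ the product is convolution $*$ (with values in $K$, resp.\ in $H$), and one checks that the convolution of two $H$-colinear maps into $H$ is again $H$-colinear — since multiplication $H\otimes H\to H$ is $H$-colinear for $\rho_H=\Delta_H$ — so the target is a subalgebra of $\mathrm{Hom}(C,H)$. For the unit, the fact that $\varepsilon_C$ is a left $H$-comodule map (part of the comodule-coalgebra structure) gives $\Psi(\varepsilon_C)(c)=c_{\langle -1\rangle}\varepsilon_C(c_{\langle 0\rangle})=\varepsilon_C(c)1_H$, which is precisely the unit of $\mathrm{Hom}^{H,-}(C,H)$. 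For multiplicativity I would expand $(\Psi(\alpha)*\Psi(\beta))(c)=(c_{(1)})_{\langle -1\rangle}(c_{(2)})_{\langle -1\rangle}\,\alpha((c_{(1)})_{\langle 0\rangle})\beta((c_{(2)})_{\langle 0\rangle})$, pulling the (central) scalars out of $H$, and then invoke the defining compatibility of a left $H$-comodule coalgebra, namely that $\rho_C$ is a coalgebra morphism:
\[
c_{\langle -1\rangle}\otimes (c_{\langle 0\rangle})_{(1)}\otimes (c_{\langle 0\rangle})_{(2)}=(c_{(1)})_{\langle -1\rangle}(c_{(2)})_{\langle -1\rangle}\otimes (c_{(1)})_{\langle 0\rangle}\otimes (c_{(2)})_{\langle 0\rangle}.
\]
Applying $H\otimes\alpha\otimes\beta$ turns the right-hand side into the expression above and the left-hand side into $c_{\langle -1\rangle}(\alpha*\beta)(c_{\langle 0\rangle})=\Psi(\alpha*\beta)(c)$, giving $\Psi(\alpha)*\Psi(\beta)=\Psi(\alpha*\beta)$.

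For bijectivity I would produce the inverse directly, setting $\Phi(f)=\varepsilon_H\circ f$ for $f\in\mathrm{Hom}^{H,-}(C,H)$. The composite $\Phi\circ\Psi$ is the identity by the comodule counit axiom, since $\varepsilon_H(c_{\langle -1\rangle})\alpha(c_{\langle 0\rangle})=\alpha(\varepsilon_H(c_{\langle -1\rangle})c_{\langle 0\rangle})=\alpha(c)$. Conversely, for $\Psi\circ\Phi$ I would use that an $H$-colinear $f$ satisfies $(f(c))_{(1)}\otimes (f(c))_{(2)}=c_{\langle -1\rangle}\otimes f(c_{\langle 0\rangle})$, whence $\Psi(\Phi(f))(c)=c_{\langle -1\rangle}\varepsilon_H(f(c_{\langle 0\rangle}))=(f(c))_{(1)}\varepsilon_H((f(c))_{(2)})=f(c)$ by the counit axiom in $H$.

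I expect the only substantive step to be multiplicativity: this is the single place where the full comodule-coalgebra compatibility (that $\rho_C$ respects comultiplication) is genuinely needed, whereas well-definedness, unitality and bijectivity rest only on the comodule coassociativity and counit axioms together with the colinearity of $\varepsilon_C$. The one point demanding care is extracting the scalar outputs of $\alpha$ and $\beta$ from the $H$-valued products in the right order, but as these are central scalars this is routine.
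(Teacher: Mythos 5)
Your proof is correct and follows essentially the same route as the paper: direct verification of multiplicativity via the compatibility of $\rho_C$ with $\Delta_C$, colinearity of $\Psi(\alpha)$ via comodule coassociativity, and the explicit inverse $f\mapsto\varepsilon_H\circ f$. You are in fact slightly more thorough than the paper, which merely asserts the inverse and leaves both composite verifications (and the fact that $\mathrm{Hom}^{H,-}(C,H)$ is closed under convolution) implicit, whereas you check them; the paper's only additional content is the remark that $\Psi$ arises from the adjunction between $(-)\otimes H$ and the forgetful functor, which it does not actually use.
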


\begin{proof}
Let ${^{H}\mathfrak{M}}$ denote the monoidal category of left $H$-comodules.
Then the functor $\left( -\right) \otimes H:Vec\left( K\right) \rightarrow {%
^{H}\mathfrak{M}}$ is right adjoint to the forgetful functor. The canonical
isomorphism defining the adjunction yields $\Psi $. Explicitly, let $\alpha
,\beta \in \mathrm{Hom}\left( C,K\right) $ and let $z\in C$. We have%
\begin{eqnarray*}
\left[ \Psi \left( \alpha \right) \ast \Psi \left( \beta \right) \right]
\left( z\right) &=&\left[ \left( H\otimes \alpha \right) \rho _{C}\right]
\ast \left[ \left( H\otimes \beta \right) \rho _{C}\right] \left( z\right) \\
&=&\left( H\otimes \alpha \right) \rho _{C}\left( z^{\left( 1\right)
}\right) \left[ \left( H\otimes \beta \right) \rho _{C}\right] \left(
z^{(2)}\right) \\
&=&z_{\left\langle -1\right\rangle }^{\left( 1\right) }\alpha \left(
z_{\left\langle 0\right\rangle }^{\left( 1\right) }\right) z_{\left\langle
-1\right\rangle }^{\left( 2\right) }\beta \left( z_{\left\langle
0\right\rangle }^{\left( 2\right) }\right) \\
&=&z_{\left\langle -1\right\rangle }^{\left( 1\right) }z_{\left\langle
-1\right\rangle }^{\left( 2\right) }\alpha \left( z_{\left\langle
0\right\rangle }^{\left( 1\right) }\right) \beta \left( z_{\left\langle
0\right\rangle }^{\left( 2\right) }\right) \\
&=&z_{\left\langle -1\right\rangle }\alpha \left[ \left( z_{\left\langle
0\right\rangle }\right) ^{\left( 1\right) }\right] \beta \left[ \left(
z_{\left\langle 0\right\rangle }\right) ^{\left( 2\right) }\right] \\
&=&z_{\left\langle -1\right\rangle }\left( \alpha \ast \beta \right) \left(
z_{\left\langle 0\right\rangle }\right) =\Psi \left( \alpha \ast \beta
\right) \left( z\right)
\end{eqnarray*}%
and also $\Psi \left( \varepsilon _{C}\right) \left( z\right) =\left(
H\otimes \varepsilon _{C}\right) \rho _{C}=u_{H}\varepsilon _{C}$. Thus $%
\Psi $ is an algebra homomorphism.

For $\alpha \in \mathrm{Hom}\left( C,K\right) $, then $\Psi \left( \alpha
\right)$ is a morphism in ${^{H}\mathfrak{M}}$ since
\begin{equation*}
\Delta _{H}\Psi \left( \alpha \right) \left( z\right) =\sum \Delta
_{H}\left( z_{\left\langle -1\right\rangle }\right) \alpha \left(
z_{\left\langle 0\right\rangle }\right) =\sum z_{\left\langle
-2\right\rangle }\otimes z_{\left\langle -1\right\rangle }\alpha \left(
z_{\left\langle 0\right\rangle }\right) =\sum z_{\left\langle
-1\right\rangle }\otimes \Psi \left( z_{\left\langle 0\right\rangle }\right)
.
\end{equation*}%
Finally, the composition inverse of $\Psi $ is $\Psi ^{-1}$ where $\Psi
^{-1} \left( \sigma \right) :=\varepsilon _{H}\sigma .$
\end{proof}

\vspace{2mm}

\begin{remark}
\label{rem: v e v^-1 lin}

(i) Let $(C,\Delta _{C},\varepsilon _{C},u_{C})$ be a coaugmented connected
coalgebra, $A$ an algebra and $f:C\rightarrow A$ a map such that $f(1_C) =
1_A$, i.e., $f = u_A \varepsilon_C$ on the coradical of $C$. Then by \cite[%
Lemma 5.2.10]{Mo}, $f$ is convolution invertible with inverse $%
\sum_{n=0}^{\infty}\gamma^n$ where $\gamma = u_A \varepsilon_C - f$. Then
since $\gamma^{n+1} = 0$ on $C_n$, it is also true that $f^{-1}=u_A
\varepsilon_C $ on $C_0$.

(ii) For $C$ a left $H$-module coalgebra and $\upsilon: C \rightarrow K$
convolution invertible, then $\upsilon$ is left $H$-linear if and only if $%
\upsilon^{-1}$ is. For suppose $\upsilon$ is left $H$-linear. Then
\begin{equation*}
\upsilon^{-1}(hz)=\upsilon^{-1}(hz^{\left( 1\right) })\upsilon( z^{\left(
2\right) }) \upsilon^{-1}( z^{\left( 3\right) }) =\upsilon^{-1}(h_{\left(
1\right) }z^{\left( 1\right) })\upsilon ( h_{\left( 2\right) }z^{\left(
2\right) }) \upsilon^{-1}( z^{\left( 3\right) }) =\varepsilon _{H}\left(
h\right)\upsilon^{-1}\left( z\right),
\end{equation*}
and so $\upsilon^{-1}$ is left $H$-linear also. \qed
\end{remark}

The next condition is part of the definition of a cocycle $\xi$ for a
pre-bialgebra {\ \cite{A.M.St.-Small} (see Section \ref{sec: pre-bialg} )
but makes sense for any coalgebra $C$ in $_{H}^{H}\mathcal{YD}$. }

\begin{definition}
\label{de: dualSweedler 1 cocycle} Let $C$ be a coalgebra in $_{H}^{H}%
\mathcal{YD}$ and $\alpha \in \mathrm{Hom}(C,H)$. Then we say that $\alpha$
is a dual normalized Sweedler 1-cocycle if $\Delta _{H}\alpha =(m_{H}\otimes
\alpha )(\alpha \otimes \rho _{C})\Delta _{C}$ and $\varepsilon _{H}\alpha =
\varepsilon_C$. Thus for $x \in C$,
\begin{equation}  \label{eq: Sweedler 1-cocycle}
\alpha(x)_{(1)} \otimes \alpha(x)_{(2)} = \alpha(x^{(1)}){x^{(2)}}_{\langle
-1 \rangle} \otimes \alpha({x^{(2)}}_{\langle 0 \rangle}) \quad \text{and}%
\quad \varepsilon_H(\alpha(x)) = \varepsilon_C(x).
\end{equation}
\end{definition}

\vspace{1mm} If $\alpha: C \rightarrow H$ is a dual normalized Sweedler
1-cocycle, then $\alpha$ is convolution invertible and its inverse can be
described explicitly.

\begin{proposition}
\label{pro: inverse xi}Let $C$ be a coalgebra in $_{H}^{H}\mathcal{YD}$ and
let $\alpha: C \rightarrow H$ satisfy (\ref{eq: Sweedler 1-cocycle}). Then $%
\alpha^\prime$ is the convolution inverse of $\alpha$ where
\begin{equation*}
\alpha ^{\prime}:=m_{H}\circ \left( H\otimes S_H \circ \alpha \right) \circ
\rho _{C}.
\end{equation*}
\end{proposition}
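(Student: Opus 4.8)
The claim is that $\alpha'$ is a two-sided convolution inverse of $\alpha$ in the algebra $(\mathrm{Hom}(C,H),\ast)$, whose unit is $u_H\varepsilon_C$; unwinding the definition, $\alpha'(x)=x_{\langle-1\rangle}S_H(\alpha(x_{\langle0\rangle}))$. So the plan is to check the two identities $\alpha\ast\alpha'=u_H\varepsilon_C$ and $\alpha'\ast\alpha=u_H\varepsilon_C$ separately, the first being a direct computation and the second requiring a short structural argument.

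The identity $\alpha\ast\alpha'=u_H\varepsilon_C$ is the direction for which $\alpha'$ was tailored. Substituting the definition of $\alpha'$ gives
\[
(\alpha\ast\alpha')(x)=\alpha(x^{(1)})\,x^{(2)}_{\langle-1\rangle}\,S_H(\alpha(x^{(2)}_{\langle0\rangle})),
\]
and the cocycle condition (\ref{eq: Sweedler 1-cocycle}) says precisely that $\alpha(x^{(1)})\,x^{(2)}_{\langle-1\rangle}\otimes\alpha(x^{(2)}_{\langle0\rangle})=\Delta_H(\alpha(x))$. Hence the right-hand side collapses to $\alpha(x)_{(1)}S_H(\alpha(x)_{(2)})=\varepsilon_H(\alpha(x))1_H$, which by the normalization $\varepsilon_H\alpha=\varepsilon_C$ equals $\varepsilon_C(x)1_H$. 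This half uses only (\ref{eq: Sweedler 1-cocycle}) and the antipode axiom $m_H(H\otimes S_H)\Delta_H=u_H\varepsilon_H$, and it shows simultaneously that $\alpha'$ is a right inverse of $\alpha$ and that $\alpha$ is a left inverse of $\alpha'$.

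The reverse identity $\alpha'\ast\alpha=u_H\varepsilon_C$ is the main obstacle: it does not collapse in the same way, because the cocycle condition is asymmetric, producing a comodule leg only on the second tensorand, so in $\alpha'(x^{(1)})\alpha(x^{(2)})$ the coaction sits on the wrong factor and no single application of (\ref{eq: Sweedler 1-cocycle}) finishes the job. Rather than force a direct cancellation I would argue as follows. I would first verify that $\alpha'$ is again a dual normalized Sweedler 1-cocycle. The normalization $\varepsilon_H\alpha'=\varepsilon_C$ is immediate from $\varepsilon_H\circ S_H=\varepsilon_H$ and the counit axiom of the comodule $C$; the cocycle identity for $\alpha'$ is obtained by computing $\Delta_H\alpha'$ via $\Delta_H\circ S_H=(S_H\otimes S_H)\circ\tau\circ\Delta_H$, inserting the cocycle identity for $\alpha$, and then using the comodule coassociativity together with the left $H$-colinearity of $\Delta_C$ (available since $C$ is a coalgebra in ${}_H^H\mathcal{YD}$) to reorganize the iterated coactions. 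This colinearity step is the technical heart of the argument. Granting it, the first half applied to $\alpha'$ in place of $\alpha$ produces a right inverse $(\alpha')'$ of $\alpha'$. Then $\alpha'$ has a left inverse, namely $\alpha$, and a right inverse, namely $(\alpha')'$, so the elementary identity $\alpha=\alpha\ast(\alpha'\ast(\alpha')')=(\alpha\ast\alpha')\ast(\alpha')'=(\alpha')'$ forces the two one-sided inverses to coincide and be two-sided; in particular $\alpha'\ast\alpha=\alpha'\ast(\alpha')'=u_H\varepsilon_C$, as required. As a sanity check, in the connected case one sees directly that $\alpha(1_C)$ is grouplike, hence convolution-invertible on the coradical, and the standard coradical-filtration criterion then already forces $\alpha$ to be convolution-invertible, so that the right inverse $\alpha'$ is automatically the two-sided inverse.
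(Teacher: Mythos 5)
Your first identity, $\alpha\ast\alpha'=u_H\varepsilon_C$, is exactly the paper's computation and is correct. The gap is in your treatment of $\alpha'\ast\alpha$: the intermediate claim that $\alpha'$ is again a dual normalized Sweedler 1-cocycle is false in general, so "the first half applied to $\alpha'$" does not produce a right inverse $(\alpha')'$, and the left-inverse/right-inverse argument has nothing to stand on. If you carry out the computation you sketch (anti-comultiplicativity of $S_H$, then (\ref{eq: Sweedler 1-cocycle}) for $\alpha$, then $H$-colinearity of $\Delta_C$ and coassociativity of $\rho_C$), what comes out is the \emph{mirrored} identity
\begin{equation*}
\Delta _{H}\left( \alpha ^{\prime }(x)\right) =x_{\langle -1\rangle
}^{(1)}\,\alpha ^{\prime }(x^{(2)})\otimes \alpha ^{\prime }\bigl(x_{\langle
0\rangle }^{(1)}\bigr),
\end{equation*}
in which the roles of $x^{(1)}$ and $x^{(2)}$ and the order of multiplication are interchanged relative to (\ref{eq: Sweedler 1-cocycle}); this is not the cocycle condition for $\alpha'$. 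That the two conditions genuinely differ is visible already in the degenerate case where $C$ carries the trivial Yetter-Drinfeld structure (trivial action and trivial coaction, which is a legitimate coalgebra in $_{H}^{H}\mathcal{YD}$): there (\ref{eq: Sweedler 1-cocycle}) says precisely that $\alpha$ is a counital coalgebra map, and $\alpha'=S_H\circ\alpha$, which is a coalgebra map into $H^{cop}$ but not into $H$ unless cocommutativity intervenes. For instance, $C=H$ as a coalgebra with trivial Yetter-Drinfeld structure, $H$ a Taft algebra, and $\alpha=\mathrm{Id}_H$ satisfies (\ref{eq: Sweedler 1-cocycle}), while $\alpha'=S_H$ does not. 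So your detour rests on a false lemma.

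Moreover, the obstacle that motivated the detour is illusory: the direct computation does collapse, and it is the paper's proof. Write $(\alpha'\ast\alpha)(x)=x_{\langle -1\rangle }^{(1)}S_H(\alpha(x_{\langle 0\rangle }^{(1)}))\alpha(x^{(2)})$, insert $x_{\langle -2\rangle }^{(2)}S_H(x_{\langle -1\rangle }^{(2)})$ (the antipode axiom read backwards, at the cost of one extra coaction leg on $x^{(2)}$), use the $H$-colinearity of $\Delta_C$ --- this is exactly the Yetter-Drinfeld ingredient you correctly identified, but deployed on the correct statement --- to contract $x_{\langle -1\rangle }^{(1)}x_{\langle -2\rangle }^{(2)}$ into a single coaction leg $x_{\langle -1\rangle }$ and push the remaining structure onto $x_{\langle 0\rangle }$, combine the two antipode factors by anti-multiplicativity of $S_H$, and then a single application of (\ref{eq: Sweedler 1-cocycle}) to the element $x_{\langle 0\rangle }$ reduces everything to $x_{\langle -1\rangle }S_H\bigl(\alpha(x_{\langle 0\rangle })_{(1)}\bigr)\alpha(x_{\langle 0\rangle })_{(2)}=x_{\langle -1\rangle }\varepsilon _{C}(x_{\langle 0\rangle })1_H=u_H\varepsilon _{C}(x)$. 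Finally, your closing "sanity check" is correct but only for connected coaugmented $C$; the proposition assumes nothing about the coradical of $C$, so it cannot substitute for the general argument.
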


\begin{proof}
For any $x \in C $, we have
\begin{eqnarray*}
\alpha \ast \alpha ^{\prime }(x) &=& \alpha(x^{(1)})\alpha^\prime(x^{(2)}) =
\alpha(x^{(1)})x^{(2)}_{\langle -1 \rangle} S_H(\alpha(x^{(2)}_{\langle
0\rangle})) \overset{\text{(\ref{eq: Sweedler 1-cocycle})}}{=}
\alpha(x)_{(1)}S_H(\alpha(x)_{(2)}) \\
&=& \varepsilon_H \circ \alpha (x) \overset{\text{(\ref{eq: Sweedler
1-cocycle})}}{=} u_{H}\circ \varepsilon _{C}(x), \hspace{2mm} \text{ and }
\end{eqnarray*}
\begin{eqnarray*}
\alpha^{\prime } * \alpha(x) &=& \alpha^{\prime }(x^{(1)})\alpha(x^{(2)}) =
x^{(1)}_{\langle-1\rangle}S_H(\alpha(x^{(1)}_{\langle 0
\rangle}))\alpha(x^{(2)}) \\
&=& x^{(1)}_{\langle-1\rangle} x^{(2)}_{\langle-2\rangle} S_H(
x^{(2)}_{\langle-1\rangle})S_H( \alpha ((x^{(1)})_{\langle 0 \rangle}
))\alpha((x^{(2)})_{\langle 0 \rangle}) \\
&=& x_{\langle-1\rangle} S_H(( x^{(2)}_{\langle
0\rangle})_{\langle-1\rangle})S_H(\alpha(x_{\langle 0
\rangle}^{(1)}))\alpha((x^{(2)}_{\langle 0\rangle})_{\langle 0\rangle}) \\
&=& x_{\langle-1\rangle} S_H[\alpha(x_{\langle 0 \rangle}^{(1)})(
x^{(2)}_{\langle 0\rangle})_{\langle-1\rangle}] \alpha ((x^{(2)}_{\langle
0\rangle})_{\langle 0\rangle}) \\
&\overset{\text{(\ref{eq: Sweedler 1-cocycle})}}{=}& x_{\langle-1\rangle}
S_H(\alpha(x_{(0)})_{(1)})\alpha(x_{(0)})_{(2)}\\
&=& x_{\langle-1\rangle} \varepsilon_H(\alpha(x_{(0)}) =
x_{\langle-1\rangle} \varepsilon_{C}(x_{(0)}) \\
&=& u_H \circ \varepsilon_{C}(x).
\end{eqnarray*}
Thus $\alpha^\prime $ is the convolution inverse of $\alpha$ as claimed.
\end{proof}

The next definition/lemma will be useful in upcoming computations.

\begin{lemma}
\label{lem: Phi} Let $C $ be a coalgebra and let $(M, \mu)$ be a left $H$%
-module. Define
\begin{equation*}
\Phi :\mathrm{Hom} \left( C,H\right) \rightarrow \mathrm{End} \left(
C\otimes M\right) \text{ by } \Phi \left( \alpha \right) :=\left( C\otimes
\mu _{M}\right) \left[ \left( C\otimes \alpha \right) \Delta _{C}\otimes M%
\right],
\end{equation*}
for $\alpha \in \mathrm{Hom}(C,H)$. The map $\Phi $ is an algebra
homomorphism.
\end{lemma}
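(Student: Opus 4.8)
The plan is to verify directly that $\Phi$ is $K$-linear, preserves the unit, and sends the convolution product on $\mathrm{Hom}(C,H)$ to composition in $\mathrm{End}(C\otimes M)$. Linearity is immediate, since $\alpha\mapsto (C\otimes\alpha)$ is $K$-linear and the remaining maps entering the definition of $\Phi(\alpha)$ are fixed, so only multiplicativity and preservation of units require work. The first concrete step is to record the explicit action of $\Phi(\alpha)$ on elements. Unwinding the definition and writing $h\cdot m$ for $\mu_M(h\otimes m)$, one obtains $\Phi(\alpha)(c\otimes m)=c_{(1)}\otimes \alpha(c_{(2)})\cdot m$ for all $c\in C$, $m\in M$. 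Every subsequent identity will be read off from this formula.

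For the unit, recall that the identity of the convolution algebra $\mathrm{Hom}(C,H)$ is $u_H\varepsilon_C$. Applying the formula together with the counit axiom of $C$ and the fact that $M$ is a unital module gives $\Phi(u_H\varepsilon_C)(c\otimes m)=c_{(1)}\varepsilon_C(c_{(2)})\otimes 1_H\cdot m=c\otimes m$, so $\Phi(u_H\varepsilon_C)=\mathrm{id}_{C\otimes M}$.

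The main computation is multiplicativity, $\Phi(\alpha\ast\beta)=\Phi(\alpha)\circ\Phi(\beta)$. Starting from the right-hand side, I would apply the element formula twice: $\Phi(\beta)$ first produces $c_{(1)}\otimes\beta(c_{(2)})\cdot m$, and then applying $\Phi(\alpha)$ with $c_{(1)}$ in the $C$-slot and invoking coassociativity of $\Delta_C$ yields $c_{(1)}\otimes\alpha(c_{(2)})\cdot(\beta(c_{(3)})\cdot m)$. The left-hand side is $c_{(1)}\otimes(\alpha\ast\beta)(c_{(2)})\cdot m$, which after expanding the convolution and using coassociativity equals $c_{(1)}\otimes(\alpha(c_{(2)})\beta(c_{(3)}))\cdot m$. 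The two expressions are then identified by the single key point, the associativity axiom of the left $H$-module $M$, namely $\alpha(c_{(2)})\cdot(\beta(c_{(3)})\cdot m)=(\alpha(c_{(2)})\beta(c_{(3)}))\cdot m$.

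This is essentially a bookkeeping exercise in Sweedler notation; the only genuine inputs are coassociativity of $\Delta_C$ and the module associativity of $M$, so I expect no real obstacle beyond keeping the indices aligned. I would emphasize that no compatibility between $C$, $H$ and $M$ as Yetter--Drinfeld data is used here: $C$ enters only as a coalgebra and $M$ only as a left $H$-module, which explains why the lemma can be stated at this level of generality.
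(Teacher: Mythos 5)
Your proof is correct and follows essentially the same route as the paper: both write out the element formula $\Phi(\alpha)(c\otimes m)=c_{(1)}\otimes\alpha(c_{(2)})\cdot m$ and verify multiplicativity and unitality by direct Sweedler-notation computation, using coassociativity of $\Delta_C$ and the associativity of the $H$-action on $M$. Your remark that no Yetter--Drinfeld compatibility is needed is a nice observation, but it is the same argument at the same level of generality as the paper's.
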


\begin{proof}
By definition,
\begin{equation*}
\Phi(\alpha)(x \otimes m) = x_{(1)} \otimes \alpha(x_{(2)})m \text{ for } x
\in C, m \in M.
\end{equation*}
Then for $\alpha ,\beta \in \mathrm{Hom} \left( C ,H\right)$,
\begin{eqnarray*}
(\Phi(\alpha)\circ\Phi(\beta))(x \otimes m) &=& \Phi(\alpha)\ (x_{(1)}
\otimes \beta(x_{(2)})m) = x_{(1)} \otimes \alpha(x_{(2)})\beta(x_{(3)})m \\
&=& x_{(1)} \otimes (\alpha * \beta)(x_{(2)})m = \Phi(\alpha * \beta)(x
\otimes m),
\end{eqnarray*}
and
\begin{equation}
\Phi(u_H \circ \varepsilon_C)(x \otimes m ) = x_{(1)} \otimes (u_H \circ
\varepsilon_C)(x_{(2)}) m = x \otimes m.  \notag
\end{equation}
\end{proof}

\begin{remark}
\label{re: Phi} Note that $\Phi(\alpha) = \text{Id}_{C \otimes M}$ if and
only if the action of $\alpha(C)$ on $M$ is trivial. For if $x \otimes m
=x_{(1)} \otimes \alpha(x_{(2)})m$, apply $\varepsilon_C$ to the left hand
tensorand to see that $\alpha(C)$ acts trivially on $M$. The converse is
obvious. \qed
\end{remark}

\subsection{Pre-bialgebras with cocycle}

In this section, we recall the notion of a pre-bialgebra with cocycle in $%
_{H}^{H}\mathcal{YD}$ and explain how a pre-bialgebra with cocycle is
associated to a splitting datum. Throughout this section, $H$ will denote a
Hopf algebra over $K$.

\begin{definition}
(cf. \cite[Definition 1.8]{AM-Small2}) A \textbf{splitting datum} $\left(
A,H,\pi ,\sigma \right) $ consists of a bialgebra $A,$ a bialgebra
homomorphism $\sigma :H\rightarrow A$ and an $H$-bilinear coalgebra
homomorphism $\pi :A\rightarrow H$ such that $\pi \sigma =\mathrm{Id}_{H}$.
Note that $H$-bilinear here means $\pi(\sigma(h)x \sigma(h^{\prime})) =
h\pi(x)h^{\prime}$ for all $h,h^{\prime}\in H$ and $x\in A$. We say that a
splitting datum is \textbf{trivial} whenever $\pi $ is a bialgebra
homomorphism.
\end{definition}

\begin{example}
Let $H$ be a Hopf algebra and let $A=R\#H$ be the usual Radford biproduct of
a bialgebra $R$ in $^H_H\mathcal{YD}$ and $H$. Let $\sigma:H\rightarrow
A,\sigma(h)=1_R\# h,$ and let $\pi:A\rightarrow
H,\pi(r\#h)=r\varepsilon_H(h).$ Then $(A,H,\pi,\sigma)$ is a trivial
splitting datum. Conversely, for $A$ a Hopf algebra, if $(A,H,\pi,\sigma)$
is a trivial splitting datum, then $A$ is isomorphic to a Radford biproduct
or bosonization of $H$ (identified with $\sigma(H)$) and the $K$-algebra $R$
of $\pi$-coinvariants, a Hopf algebra in the category ${\ _{H}^{H}\mathcal{YD%
}}$.
\end{example}

If $\pi$ is not an algebra map, then $R = \{ x \in A| (A \otimes
\pi)\Delta(x) = x \otimes 1 \}$ need not be a Hopf algebra in ${_{H}^{H}%
\mathcal{YD}}$ but instead will be a \textit{pre-bialgebra with a cocycle in
$^H_H\mathcal{YD}$}.

\subsubsection{Definition of a pre-bialgebra and a pre-bialgebra with cocycle%
}

\label{sec: pre-bialg} Following \cite[Definition 2.3, Definitions 3.1]%
{A.M.St.-Small}, we define the following. A pre-bialgebra $R$ = $%
(R,m_R,u_R,\Delta_R ,\varepsilon_R )$ in ${_{H}^{H}\mathcal{YD}}$ is a
coaugmented coalgebra $\left( R,\Delta_R ,\varepsilon_R, u_R \right) $ in
the category ${_{H}^{H}\mathcal{YD}} $ together with a left $H$-linear map $%
m_R:R\otimes R\rightarrow R $ such that $m_{R}$ is a coalgebra homomorphism,
i.e,
\begin{equation}
\Delta_R m_{R}=(m_{R}\otimes m_{R})\Delta _{R\otimes R}\qquad \text{and}
\qquad \varepsilon_R m_{R}=m_{K}(\varepsilon_R \otimes \varepsilon_R ),
\end{equation}

and $u_R$ is a unit for $m_{R}$, i.e.,
\begin{equation}
m_{R}(R\otimes u_R)=R=m_{R}(u_R \otimes R).  \label{eq:YD9'}
\end{equation}

When clear from the context, the subscript $R$ on the structure maps above
is omitted. \vspace{1mm}

Essentially a pre-bialgebra differs from a bialgebra in $_H^H\mathcal{YD}$
in that \textbf{the multiplication need not be associative and need not be a
morphism of $H$-comodules}, see Example \ref{ex: pbw}. \vspace{2mm}

A pre-bialgebra \emph{with cocycle }in ${_{H}^{H}\mathcal{YD}}$ is a pair $%
(R,\xi )$ where $R=(R,m,u,\Delta ,\varepsilon )$ is a pre-bialgebra in ${%
_{H}^{H}\mathcal{YD}}$ and $\xi: C=R \otimes R \rightarrow H $ is a
normalized dual Sweedler 1-cocycle (\ref{eq: Sweedler 1-cocycle}), left $H$%
-linear with respect to the left adjoint action of $H$ on $H$, and, for all $%
r,s\in R$ and $h\in H$, the following hold:
\begin{eqnarray}
&&c_{R,H}(m\otimes \xi )\Delta _{R\otimes R}=(m_{H}\otimes m_{R})(\xi
\otimes \rho _{R\otimes R})\Delta _{R\otimes R};  \label{eq:YD6'} \\
&& m_{R}(R\otimes m_{R})= m_{R}(R\otimes \mu _{R})[(m_{R}\otimes \xi )\Delta
_{R\otimes R}\otimes R] = m_R(m_R \otimes R)\Phi(\xi);  \label{eq:YD7'} \\
&&m_{H}(\xi \otimes H)[R\otimes (m_{R}\otimes \xi )\Delta _{R\otimes
R}]=m_{H}(\xi \otimes H)(R\otimes c_{H,R})[(m_{R}\otimes \xi )\Delta
_{R\otimes R}\otimes R];  \label{eq:YD8'} \\
&&\xi (R\otimes u)=\xi (u\otimes R)=\varepsilon 1_{H}.  \label{eq:YD10'}
\end{eqnarray}

By definition \cite{AM-Small2}, a map $f:(\left( R,m,u,\delta ,\varepsilon
\right) ,\xi )\rightarrow \left( (R^{\prime },m^{\prime },u^{\prime },\delta
^{\prime },\varepsilon ^{\prime }),\xi ^{\prime }\right) $\ of
pre-bialgebras with a cocycle in ${_{H}^{H}\mathcal{YD}}$, is a morphism of
pre-bialgebras with cocycle if $f$ is a coalgebra homomorphism $f:\left(
R,\delta ,\varepsilon \right) \rightarrow \left( R^{\prime },\delta ^{\prime
},\varepsilon ^{\prime }\right) $ in the category $({_{H}^{H}\mathcal{YD}}%
,\otimes ,K)$ such that%
\begin{equation*}
f\circ m=m^{\prime }\circ \left( f\otimes f\right) ,\qquad f\circ
u=u^{\prime }\qquad \text{and}\qquad \xi ^{\prime }\circ \left( f\otimes
f\right) =\xi .
\end{equation*}%
\vspace{1.5mm}

\begin{remark}
\label{rem: associativity}In (\ref{eq:YD7'}) the map $\Phi$ from Lemma \ref%
{lem: Phi} is used with $C=R \otimes R$ and $M=R$. Thus if $\Phi(\xi)$ is
the identity, or, equivalently, if $\xi(R \otimes R)\subseteq H$ acts
trivially on $R$, then $m_R$ is associative. We will see in Section \ref%
{sec: xi} that if $R$ is connected, the converse holds. \qed
\end{remark}

\vspace{1.5mm} Let $(R,\xi)$ be a pre-bialgebra with cocycle in ${_{H}^{H}%
\mathcal{YD}}$. Since $\xi: C = R \otimes R \rightarrow H $ is a dual
Sweedler 1-cocycle, by Proposition \ref{pro: inverse xi}, $\xi$ is
convolution invertible with convolution inverse
\begin{equation}  \label{form: xi
inverse}
\xi^{-1} = m_H \circ (H \otimes S_H \circ \xi) \circ \rho_{R \otimes R}.
\end{equation}

Thus equation (\ref{eq:YD7'}) is equivalent to:
\begin{equation}
m_{R}\circ \left( m_{R}\otimes R\right) = m_{R}\circ (R\otimes m_{R})\circ
\Phi \left( \xi ^{-1}\right).  \notag
\end{equation}

\vspace{1.5mm}

If, as well, $R$, and thus $R \otimes R$ is connected, since $\xi(1_{R
\otimes R}) = u_H(1_K)$, by Remark \ref{rem: v e v^-1 lin}(i) with $C = R
\otimes R$ and $A=H$, then another form for $\xi^{-1}$ is
\begin{equation*}
\xi^{-1} = \sum_{n=0}^{\infty} ( u_H \circ \varepsilon_C - \xi)^n.
\end{equation*}

\vspace{2mm}

\subsubsection{The splitting datum associated to a pre-bialgebra with cocycle%
}

\label{sec: splittingdatum} To every $(R, \xi)$, we can associate a
splitting datum $(A:=R \#_\xi H ,H,\pi,\sigma)$ where the bialgebra $R
\#_\xi H$ is constructed as follows (see \cite[Theorem 3.62]{A.M.S.} and
\cite[Definitions 3.1]{A.M.St.-Small} ). As a vector space, $A=R\otimes H$
with coalgebra and algebra structures given below.

Let $r,s \in R$, $h,h^\prime \in H$. The coalgebra structures are $%
\varepsilon _{A}\left( r\#h\right) = \varepsilon_R \left( r\right)
\varepsilon _{H}\left( h\right)$, and
\begin{equation}
\Delta _{A}\left( r\#h\right) = r^{(1)}\#r_{\langle -1\rangle
}^{(2)}h_{(1)}\otimes r_{\langle 0\rangle }^{(2)}\#h_{\left( 2\right)
},\qquad \text{where }\Delta_R (r)= r^{(1)}\otimes r^{(2)}.
\label{eq: DeltaA}
\end{equation}
In other words, as a coalgebra, $A$ is the smash coproduct of $R$ and $H$.

For future calculations, we note:
\begin{eqnarray}  \label{form: DeltaAr}
\Delta _{A}\left( r\#1_{H}\right) &=& r^{(1)}\#r_{\langle -1\rangle
}^{(2)}\otimes r_{\langle 0\rangle }^{(2)}\#1_{H} \\
\Delta _{A}^{2}\left( r\#1_{H}\right) &=& r^{(1)}\#r_{\langle -1\rangle
}^{(2)}r_{\langle -2\rangle }^{(3)}\otimes r_{\langle 0\rangle
}^{(2)}\#r_{\langle -1\rangle }^{(3)}\otimes r_{\langle 0\rangle
}^{(3)}\#1_{H}.  \label{form: Delta2Ar}
\end{eqnarray}

The unit is $u_{A}(1) = 1_R\#1_{H}$ and the multiplication is given by
\begin{equation*}
m_{A} = \left( R\otimes m_{H}\right) \left[ \left( m_{R}\otimes \xi \right)
\Delta _{R\otimes R}\otimes m_{H}\right] \left( R\otimes c_{H,R}\otimes
H\right)
\end{equation*}
so that for $r,s \in R$, $h, h^\prime \in H$,
\begin{equation}
m_{A}(r \#h \otimes s \#h^{\prime }) = m_R(r^{(1)} \otimes r_{\langle -1
\rangle}^{(2)}(h_{(1)}s)^{(1)}) \# \xi(r^{(2)}_{\langle 0 \rangle} \otimes
(h_{(1)}s)^{(2)})h_{(2)}h^{\prime }.  \label{form: multi
smash xi}
\end{equation}
Using the map $\Phi$ from Lemma \ref{lem: Phi} with $C = R \otimes R$ and $M
= (H,m_H)$, we write:
\begin{eqnarray}
m_{R \#_\xi H} &= & ( m_R \otimes m_{H})\circ (\Phi(\xi)\otimes H) \circ
\left( R\otimes c_{H,R}\otimes H\right)  \notag \\
& =& (m_R \otimes H )\circ \Phi(\xi) \circ (R\otimes R \otimes m_H) \circ
\left( R\otimes c_{H,R}\otimes H\right) .  \label{eq:
BmultPhi}
\end{eqnarray}
Here, unless $\xi(R \otimes R) = K$, the action of $\xi(R \otimes R)$ will
not be trivial and $\Phi(\xi)$ will not be the identity. It will be useful
to have the following formulas:
\begin{eqnarray}  \label{form: varepsilonHmA}
(R \otimes \varepsilon_H)m_A(r \# h \otimes s \# h^{\prime }) &=& m_R(r
\otimes hs)\varepsilon(h^{\prime }); \\
(\varepsilon_R \otimes H)m_A(r \# h \otimes s \# h^{\prime }) &=& \xi(r
\otimes h_{(1)}s)h_{(2)}h^{\prime }.  \label{form:
varepsilonRmA}
\end{eqnarray}

Note that the canonical injection $\sigma :H\hookrightarrow R\#_{\xi }H$ is
a bialgebra homomorphism. \label{lem: sigma bialg}Furthermore
\begin{equation*}
\pi :R\#_{\xi }H\rightarrow H:r\#h\longmapsto \varepsilon \left( r\right) h
\end{equation*}
is an $H$-bilinear coalgebra retraction of $\sigma $.

\subsubsection{The pre-bialgebra with cocycle associated to a splitting datum%
}

\label{sec: A co H}

Suppose that $(A,H,\pi ,\sigma )$ is a splitting datum. In this subsection
we describe $(R,\xi )$ , the associated pre-bialgebra with cocycle in $%
_{H}^{H}\mathcal{YD}$ \cite[Definitions 3.2]{A.M.St.-Small}. As when $\pi $
is a bialgebra morphism and $A$ is a Radford biproduct, set
\begin{equation*}
R=A^{co\pi }=\left\{ a\in A\mid a_{\left( 1\right) }\otimes \pi \left(
a_{\left( 2\right) }\right) =a\otimes 1_{H}\right\} ,
\end{equation*}%
and let
\begin{equation*}
\tau :A\rightarrow R,\hspace{1mm}\tau \left( a\right) =a_{\left( 1\right)
}\sigma S\pi \left( a_{\left( 2\right) }\right) .
\end{equation*}%
Define a left-left Yetter-Drinfeld structure on $R$ by
\begin{equation*}
h\cdot r=hr=\sigma \left( h_{\left( 1\right) }\right) r\sigma S_{H}\left(
h_{\left( 1\right) }\right) ,\text{\qquad }\rho \left( r\right) =\pi \left(
r_{\left( 1\right) }\right) \otimes r_{\left( 2\right) },
\end{equation*}%
and define a coalgebra structure in $_{H}^{H}\mathcal{YD}$ on $R$ by
\begin{equation}
\Delta _{R}(r)=r^{(1)}\otimes r^{(2)}=r_{(1)}\sigma S\pi (r_{(2)})\otimes
r_{(3)}=\tau \left( r_{\left( 1\right) }\right) \otimes r_{\left( 2\right) },%
\text{\qquad }\varepsilon =\varepsilon _{A\mid R}.  \label{form: comulti R}
\end{equation}

The map
\begin{equation*}
\omega :R\otimes H\rightarrow A\text{, }\omega (r\otimes h)=r\sigma (h)
\end{equation*}%
is an isomorphism of $K$-vector spaces, the inverse being defined by%
\begin{equation*}
\omega ^{-1}:A\rightarrow R\otimes H\text{, }\omega ^{-1}(a)=a_{\left(
1\right) }\sigma S_{H}\pi \left( a_{\left( 2\right) }\right) \otimes \pi
\left( a_{\left( 3\right) }\right) =\tau \left( a_{\left( 1\right) }\right)
\otimes \pi \left( a_{\left( 2\right) }\right) .
\end{equation*}%
Clearly $A$ defines, via $\omega $, a bialgebra structure on $R\otimes H$
that will depend on $\sigma $ and $\pi $. As shown in \cite[6.1]{Scha} and
\cite[Theorem 3.64]{A.M.S.}, $(R,m,u,\Delta ,\varepsilon )$ is a
pre-bialgebra in ${_{H}^{H}\mathcal{YD}}$ with cocycle $\xi $ where the maps
$u:K\rightarrow R$ and $m:R\otimes R\rightarrow R$, are defined by%
\begin{equation*}
u=u_{A}^{\mid R},\text{\qquad }m(r\otimes s)=r_{(1)}s_{(1)}\sigma S\pi
(r_{(2)}s_{(2)})=\tau \left( r\cdot _{A}s\right)
\end{equation*}%
and the cocycle $\xi :R\otimes R\rightarrow H$ is the map defined by
\begin{equation*}
\xi (r\otimes s)=\pi (r\cdot _{A}s).
\end{equation*}%
Then $(R,\xi)$ is the pre-bialgebra with cocycle in $_{H}^{H} \mathcal{YD}$
associated to $\left( A,H,\pi ,\sigma \right) $.

We note that the map $\tau $ above is a surjective coalgebra homomorphism
and satisfies the following where $a\in A,h\in H$ and $r,s\in R.$ \cite[%
Proposition 3.4]{A.M.St.-Small}:
\begin{eqnarray*}
\tau \left[ a\sigma \left( h\right) \right] &=&\tau \left( a\right)
\varepsilon _{H}\left( h\right) ,\qquad \tau \left[ \sigma \left( h\right) a%
\right] =\text{ }h\cdot \tau \left( a\right) , \\
r\cdot _{R}s &=&\tau \left( r\cdot _{A}s\right) ,\qquad \tau \left( a\right)
\cdot _{R}\tau \left( b\right) =\tau \left[ \tau \left( a\right) \cdot _{A}b%
\right] .
\end{eqnarray*}
Note that $h\cdot r=\tau \left( \sigma \left( h\right) r\right) $ for all $%
h\in H,r\in R.$

\subsubsection{The correspondence between splitting data and pre-bialgebras
with cocycle}

\label{sec: correspondence}

If we start with a splitting datum $(A,H,\pi,\sigma)$, and construct $%
(R,\xi) $ as in Section \ref{sec: A co H}, and then construct the splitting
datum $(R \#_\xi H, H, \pi,\sigma)$ associated to $(R,\xi)$ as in Section %
\ref{sec: splittingdatum}, then (cf. \cite[6.1]{Scha}) $\omega :R\#_{\xi
}H\rightarrow A$ is a bialgebra isomorphism.

Conversely, we start with a pre-bialgebra with cocycle $(R,\xi )$ in ${%
_{H}^{H}\mathcal{YD}}$ and construct the splitting datum $(R\#_{\xi }H,H,\pi
,\sigma )$
\begin{equation*}
\sigma :H\hookrightarrow R\#_{\xi }H\text{\qquad and\qquad }\pi :R\#_{\xi
}H\rightarrow H
\end{equation*}%
as in Section \ref{sec: splittingdatum}. Then $(R\#_{\xi }H)^{co\pi} =R \# K
$ and so the pre-bialgebra in ${_{H}^{H}\mathcal{YD}}$ associated to $\left(
R\#_{\xi }H,H,\pi ,\sigma \right) $ constructed in Section \ref{sec: A co H}
is $R \otimes K$ which is isomorphic to $R$ as a coalgebra in $_{H}^{H}%
\mathcal{YD}$ via the map $\theta:R\otimes K\rightarrow R $ where $\theta
(r\otimes 1)=r$. The corresponding cocycle is $\xi ^{\prime }$ where {$\xi
^{\prime }=$}$\xi \left(\theta\otimes \theta\right) $. Clearly $\theta$
induces an isomorphism of pre-bialgebras with cocycle between $\left(
R\otimes K,{\xi ^{\prime }}\right) $ and $\left( R,{\xi }\right).$

In this situation we note that $\tau:(R\#_{\xi }H) \rightarrow (R\#_{\xi
}H)^{co\pi}$ is given by $R \otimes \varepsilon$. For we have that
\begin{eqnarray*}
\tau(r \#h) &\overset{(\ref{eq: DeltaA})}{=}& (r^{(1)} \# r^{(2)}_{\langle
-1 \rangle}h_{(1)}) \sigma S \pi(r^{(2)}_{\langle 0 \rangle}\# h_{(2)}) \\
&=& (r^{(1)} \# r^{(2)}_{\langle -1 \rangle}h_{(1)}) \sigma S(h_{(2)})
\varepsilon(r^{(2)}_{\langle 0 \rangle}) \\
&=& r \# \varepsilon(h).
\end{eqnarray*}

\section{Associativity of $(R,\protect\xi)$}

\label{sec: xi} In general, the multiplication in a pre-bialgebra $R$
associated to a splitting datum $(A,H,\pi, \sigma)$ need not be associative.
It was noted in the previous section that $m_R$ is associative if $\Psi(\xi)$
is the identity, or, equivalently, if $\xi(R \otimes R) \subset H$ acts
trivially on $R$. First we consider some examples, and then show that the
converse statement holds if $R$ is connected.

\begin{example}
\textit{Thin pre-bialgebras} A pre-bialgebra $R$ is called \textit{thin} if $%
R$ is connected and the space of primitives of $R$ is also one-dimensional.
By \cite[Theorem 3.14]{A.M.St.-Small}, a finite dimensional thin
pre-bialgebra $(R,\xi)$ has associative multiplication, even for nontrivial $%
\xi$, but need not be a bialgebra in $_H^H\mathcal{YD}$ by \cite[Example 6.4]%
{AM-Small2}.
\end{example}

For $\Gamma$ a finite abelian group, $V \in {_{\Gamma }^{\Gamma }\mathcal{YD}%
}$ (where we write ${_{\Gamma }^{\Gamma }\mathcal{YD}}$ for ${_{H }^{H}%
\mathcal{YD}}$ with $H = K[\Gamma]$), and $\mathcal{B}(V)$ the Nichols
algebra of $V$, then all Hopf algebras whose associated graded Hopf algebra
is $\mathcal{B}(V) \# K[\Gamma]$, called the liftings of $\mathcal{B}(V) \#
K[\Gamma]$, are well-known if $V$ is a quantum linear space. Recall that for
$g \in G$ and $\chi \in \widehat{\Gamma}$, $V_{g }^{\chi }$ is the set of $v
\in V$ such that $\rho(v) = v_{\langle -1 \rangle} \otimes v_{\langle 0
\rangle} = g \otimes v$ and $h \cdot v = \chi(h)v$ for all $h \in G$.

\begin{definition}
\label{de: qls} $V=\oplus _{i=1}^{t}Kv_{i}\in {_{\Gamma }^{\Gamma }\mathcal{%
YD}}$ with $0\neq v_{i}\in V_{g_{i}}^{\chi _{i}}$ with $g_i \in \Gamma$, $%
\chi_i \in \widehat{\Gamma}$ is called a quantum linear space when
\begin{equation*}
\chi _{i}(g_{j})\chi _{j}(g_{i})=1\text{ for }i\neq j\mbox{ and
}\chi _{i}(g_{i})\text{ is a primitive }r_{i}th\mbox{ root of
}1,r_{i}>1.
\end{equation*}
\end{definition}

The following is proved in \cite{AS1} or \cite{bdgconstructing}.

\begin{proposition}
\label{pr: lift} For $V$ a quantum linear space with $\chi_i(g_i)$ a
primitive $r_i$th root of 1, all liftings $A:= A(a_i,a_{ij}| 1 \leq i,j\leq
t)$ of $\mathcal{B}(V)\#K[\Gamma]$ are Hopf algebras generated by the
grouplikes and by $(1,g_i)$-primitives $x_i$, $1 \leq i \leq t$ where
\begin{eqnarray*}
hx_i &=& \chi_i(h)x_ih; \\
x_i^{r_i} &=& a_i(1 - g_i^{r_i} ) \text{ where } a_i = 0 \text{ if }
g_i^{r_i}=1 \mbox{ or } \chi_i^{r_i} \neq \varepsilon; \\
x_ix_j &=& \chi_j(g_i)x_jx_i + a_{ij}(1 - g_ig_j ) \mbox{ where } a_{ij} = 0 %
\mbox{ if }g_ig_j=1 \mbox{ or } \chi_i\chi_j \neq \varepsilon.
\end{eqnarray*}
\end{proposition}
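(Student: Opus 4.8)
The statement being proposed is a classification result: it asserts that every lifting $A$ of $\mathcal{B}(V)\#K[\Gamma]$, for $V$ a quantum linear space, is generated by the grouplikes and the $(1,g_i)$-primitives $x_i$ subject to the three families of relations displayed. My plan is to follow the standard lifting method of Andruskiewitsch and Schneider, as referenced in \cite{AS1}. The starting point is that $A$ is a finite-dimensional pointed Hopf algebra with coradical $K[\Gamma]$ and associated graded Hopf algebra $\operatorname{gr}A \cong \mathcal{B}(V)\#K[\Gamma]$. From this I extract the generators: the grouplikes of $A$ form precisely the group $\Gamma$ (since the coradical is $K[\Gamma]$), and each degree-one generator $v_i \in V_{g_i}^{\chi_i}$ of the Nichols algebra lifts to an element $x_i \in A$ which is a $(1,g_i)$-skew-primitive, i.e.\ $\Delta(x_i) = x_i \otimes 1 + g_i \otimes x_i$. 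That these generate $A$ as an algebra follows from the fact that $\operatorname{gr}A$ is generated in degrees $0$ and $1$ (a defining feature of the Nichols algebra situation) together with a standard filtered-to-graded lifting argument.

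The heart of the proof is verifying the three relations. First I would establish the action relation $hx_i = \chi_i(h)x_i h$: this is immediate from the Yetter-Drinfeld grading $v_i \in V_{g_i}^{\chi_i}$, since the conjugation action of $h\in\Gamma$ on the lift $x_i$ is governed by the character $\chi_i$, and this passes from $\operatorname{gr}A$ to $A$ because it is a degree-preserving relation. The substantive relations are the ``quantum Serre''-type relations $x_ix_j - \chi_j(g_i)x_jx_i$ and the power relations $x_i^{r_i}$. In $\mathcal{B}(V)$ these expressions vanish; in the lifting $A$ they need not vanish but must lie in lower filtration degree. The key computation is to show that each of $x_ix_j - \chi_j(g_i)x_jx_i$ and $x_i^{r_i}$ is a $(1,g_ig_j)$- respectively $(1,g_i^{r_i})$-skew-primitive element of $A$. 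One computes $\Delta$ of these expressions directly using the comultiplication of the $x_i$ and the $q$-binomial theorem with $q=\chi_i(g_i)$ a primitive $r_i$th root of unity, so that the intermediate $q$-binomial coefficients vanish and only the two outer terms survive.

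Once skew-primitivity is established, I invoke the structure of the space of $(1,g)$-skew-primitives modulo the trivial ones $K(1-g)$: for a pointed Hopf algebra with abelian coradical, any nontrivial $(1,g)$-primitive contributes a degree-one piece to $\operatorname{gr}A$, which is controlled by $V$. Since $g_ig_j$ (resp.\ $g_i^{r_i}$) is grouplike, the skew-primitive $x_ix_j - \chi_j(g_i)x_jx_i$ must equal a scalar multiple $a_{ij}(1-g_ig_j)$, with the scalar forced to be $0$ precisely when the grading obstructs a nontrivial primitive, i.e.\ when $g_ig_j=1$ or $\chi_i\chi_j\neq\varepsilon$ (the character/grouplike matching condition for the skew-primitive to be allowed to be nontrivial). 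The power relation is handled identically with $a_i(1-g_i^{r_i})$. The main obstacle, and the step requiring the most care, is this last one: correctly identifying when the relevant space of nontrivial skew-primitives is forced to vanish, which is exactly the compatibility condition between the character $\chi_i^{r_i}$ (resp.\ $\chi_i\chi_j$) and the grouplike $g_i^{r_i}$ (resp.\ $g_ig_j$), and ensuring the resulting relations are consistent and exhaust all relations so that the dimension matches $\dim \mathcal{B}(V)\#K[\Gamma]$. Since this is the content of \cite{AS1} and \cite{bdgconstructing}, I would cite those sources for the consistency and dimension count rather than redo the full diamond-lemma verification here.
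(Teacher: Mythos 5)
The paper offers no proof of this proposition at all: it is quoted from the literature, the preceding sentence reading ``The following is proved in \cite{AS1} or \cite{bdgconstructing}.'' So the only meaningful comparison is with the lifting-method proof in those references, which is indeed the route you sketch, and most of your skeleton matches it: generation follows because $\mathrm{gr}\,A$ is generated in degrees $0$ and $1$; the relation $hx_i=\chi_i(h)x_ih$ comes from choosing the lift $x_i$ inside the $\chi_i$-eigenspace of the (diagonalizable) conjugation action of $\Gamma$ on the $(1,g_i)$-skew-primitives; the scalar $a_{ij}$ is killed when $\chi_i\chi_j\neq\varepsilon$ by comparing $\Gamma$-eigenvalues of $x_ix_j-\chi_j(g_i)x_jx_i$ and of $1-g_ig_j$, and when $g_ig_j=1$ because a finite-dimensional Hopf algebra in characteristic $0$ has no nonzero primitives; and the consistency and dimension count are legitimately deferred to the same sources the paper cites. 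One bookkeeping slip: the $q$-binomial theorem (all $\binom{r_i}{k}_q=0$ for $0<k<r_i$) is what gives skew-primitivity of $x_i^{r_i}$; skew-primitivity of the $q$-commutator is instead a direct computation whose cancellation uses the quantum-linear-space axiom $\chi_i(g_j)\chi_j(g_i)=1$, not the $q$-binomial theorem.

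The genuine gap is the step where you conclude that the skew-primitive $z_{ij}:=x_ix_j-\chi_j(g_i)x_jx_i$ ``must equal a scalar multiple $a_{ij}(1-g_ig_j)$'' merely because ``$g_ig_j$ is grouplike.'' What the structure theory actually gives is that $(1,g)$-skew-primitives modulo $K(1-g)$ inject into the degree-one part of $\mathrm{gr}\,A$, i.e.\ into $V$ (primitives of a Nichols algebra sit in degree one). Hence a priori $z_{ij}=a_{ij}(1-g_ig_j)+\sum_k c_kx_k$, the sum running over those $k$ with $(g_k,\chi_k)=(g_ig_j,\chi_i\chi_j)$, and likewise $x_i^{r_i}$ could contain terms $c_kx_k$ with $(g_k,\chi_k)=(g_i^{r_i},\chi_i^{r_i})$; the proposition as stated would be false if such terms survived. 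Ruling them out is exactly where the quantum-linear-space axioms do real work: if $(g_k,\chi_k)=(g_ig_j,\chi_i\chi_j)$ with $k\notin\{i,j\}$, then $\chi_k(g_i)\chi_i(g_k)=1$ and $\chi_k(g_j)\chi_j(g_k)=1$, combined with $\chi_i(g_j)\chi_j(g_i)=1$, force $\chi_i(g_i)^2=\chi_j(g_j)^2=1$ and then $\chi_k(g_k)=\chi_i(g_i)\chi_j(g_j)=1$, contradicting that $\chi_k(g_k)$ is a primitive $r_k$-th root of unity with $r_k>1$ (the cases $k\in\{i,j\}$ force $g_j=1$ or $g_i=1$, equally impossible); and if $(g_k,\chi_k)=(g_i^{r_i},\chi_i^{r_i})$ one gets $\chi_k(g_k)=1$ the same way, using $\chi_i(g_i)^{r_i}=1$. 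This eigenvalue analysis is carried out in \cite{AS1}; your sketch needs it, and it cannot be absorbed into the ``consistency and dimension count'' you defer, since it is what pins down the form of the relations in the first place.
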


One sees directly from Proposition \ref{pr: lift} that $a_{ji} = -
\chi_j(g_i)^{-1}a_{ij} = - \chi_i(g_j)a_{ij}$. By rescaling, we
may assume that the $a_i$ are 0 or 1.\vspace{2mm}

\begin{example}
\label{ex: pbw} Using the notation of Proposition \ref{pr: lift}, let $%
A:=A(a_1,a_2,a_{12}=a)$ be a nontrivial lifting of $B(V) \# K[\Gamma]$ where
$V =Kx_1 \oplus Kx_2$ is a quantum plane as above. Then $A$ has PBW basis $%
\{gx_1^ix_2^j |g \in \Gamma, 0 \leq i,j \leq r-1 \}$, and the map $\pi: A
\rightarrow H= K[\Gamma]$ defined by $\pi(gx_1^ix_2^j ) = \delta_{0, i+j}g$
is an $H$-bilinear coalgebra homomorphism that splits the inclusion $H
\overset{\sigma}{\hookrightarrow }A $. Thus $(A,H,\pi,i)$ is a splitting
datum and so $A \cong R \#_\xi H$ for some pre-bialgebra with cocycle $%
(R,\xi)$. Since $R = A^{co\pi}$ then $R$ has $K$-basis $\{x_1^ix_2^j| 0 \leq
i,j \leq r-1 \}$. In general, $(R,\xi)$ is not associative.
\end{example}


The next example shows that for $A = A(1,1,a)$ as above, with $r >2$, there
is no choice of an $H$-bilinear projection $\pi$ splitting the inclusion
which will make the associated pre-bialgebra $(R,\xi)$ associative.

\begin{example}
\label{ex: qlp}

Let $A:= A(1,1,a)$ be the Hopf algebra described in Proposition \ref{pr:
lift} with $t=2$, $a_1 = a_2 =1$, $a_{12}=a \neq 0$ and $r:=r_1=r_2 >2$.
Then $\chi_1 = \chi_2^{-1}$, and $1 = \chi_1(g_2)\chi_2(g_1) $ implies that $%
\chi_1(g_1) = \chi_2(g_2)^{-1}$. Let $q$ denote $\chi_1(g_1)$. Then $x_2x_1
= q x_1x_2 + a(1-g_1g_2 )$. We show that there is no $H$-bilinear coalgebra
morphism $\pi: A \rightarrow K[\Gamma] = H$ splitting the inclusion $\sigma$
such that $R= A^{co\pi}$ is associative. The proof is by contradiction.

Suppose that $\pi$ is such a morphism and $R = A^{co\pi}$ is associative.
Then since $g_1$ is an invertible element, and
\begin{equation*}
g_1\pi(x_1^nx_2^m) = \pi(g_1 x_1^nx_2^m) = q^{n-m} \pi(x_1^nx_2^m g_1) =
q^{n-m} \pi(x_1^nx_2^m) g_1 = q^{n-m} g_1\pi(x_1^nx_2^m ),
\end{equation*}
then
\begin{equation}  \label{ndifferentfromm}
\pi(x_1^nx_2^m)=0 \text{ if } n \neq m, \text{ and } 0 \leq n,m \leq r-1.
\end{equation}

Next note that if $\pi :A\rightarrow H$ is as above, and if $\pi
(x_{1}^{i}x_{2}^{j})=0$ for all $0<i+j<u+v$, then $\pi
(x_{1}^{u}x_{2}^{v})=\beta _{u,v}(g_{1}^{u}g_{2}^{v}-1)$, i.e., $\pi
(x_{1}^{u}x_{2}^{v})$ is $(1,g_{1}^{u}g_{2}^{v})$-primitive. For by the
quantum binomial theorem \cite{Kassel}, for scalars $\omega(i,j)$,
\begin{equation*}
\Delta (x_{1}^{u}x_{2}^{v})=g_{1}^{u}g_{2}^{v}\otimes
x_{1}^{u}x_{2}^{v}+x_{1}^{u}x_{2}^{v}\otimes 1+\sum_{\substack{ 0\leq i\leq
u,0\leq j\leq v,  \\ 0<i+j<u+v}}\omega(i,j)
g_{1}^{i}x_{1}^{u-i}g_{2}^{j}x_{2}^{v-j}\otimes x_{1}^{i}x_{2}^{j}
\end{equation*}%
and applying $\pi \otimes \pi $ to this expression, we obtain that
\begin{equation*}
\Delta (\pi \left( x_{1}^{u}x_{2}^{v}\right) )=g_{1}^{u}g_{2}^{v}\otimes \pi
(x_{1}^{u}x_{2}^{v})+\pi (x_{1}^{u}x_{2}^{v})\otimes 1.
\end{equation*}
Using this argument with $0 <i+j = 1$ yields $\pi (x_{1}x_{2})=\beta
(g_{1}g_{2}-1)$ for some scalar $\beta$. We now test associativity on $%
x_{1},x_{2},x_{2}$. First we have that
\begin{eqnarray*}
x_{1}\cdot _{R}x_{2} &=& \tau (x_{1}x_{2}) =g_{1}g_{2}\sigma(S_{H}(\pi
(x_{1}x_{2})))+0+0+x_{1}x_{2} \\
&=& \beta g_{1}g_{2}((g_{1}g_{2})^{-1}-1)+x_{1}x_{2} =x_{1}x_{2}-\beta
(g_{1}g_{2}-1),
\end{eqnarray*}%
so that
\begin{equation*}
(x_{1}\cdot _{R}x_{2})\cdot _{R}x_{2} =\tau ( x_{1}x_{2}x_{2}-\beta
(g_{1}g_{2}-1)x_{2}) = \tau (x_{1}x_{2}^{2})-\beta (q ^{-2}-1)x_{2}.
\end{equation*}%
On the other hand, since by (\ref{ndifferentfromm}), $\pi (x_{2})=\pi
(x_{2}^{2})=0$, then
\begin{equation*}
x_{2}\cdot _{R}x_{2}=\tau (x_{2}^{2})=x_{2}^{2},
\end{equation*}%
and thus
\begin{equation*}
x_{1}\cdot _{R}(x_{2}\cdot _{R}x_{2})=x_{1}\cdot _{R}x_{2}^{2}=\tau
(x_{1}x_{2}^{2}).
\end{equation*}%
If $R$ is associative then these expressions must be equal and thus $\beta
=0 $. Now consider multiplication in $R$ of the elements $x_{2},x_{1},x_{1}$%
. First we compute
\begin{eqnarray*}
x_{2}\cdot _{R}x_{1} &=&\tau (x_{2}x_{1})=\tau (q
x_{1}x_{2}+a(g_{1}g_{2}-1)) = q \tau (x_{1}x_{2})=q x_{1}x_{2}\mbox{ since }%
\beta =0.
\end{eqnarray*}%
Thus $(x_{2}\cdot _{R}x_{1})\cdot _{R}x_{1}=\tau (q x_{1}x_{2}x_{1})$. On
the other hand
\begin{eqnarray*}
&& x_{2}\cdot _{R}(x_{1}\cdot _{R}x_{1}) = x_{2}\cdot _{R}x_{1}^{2}=\tau
(x_{2}x_{1}^{2}) \\
&=& \tau (q x_{1}x_{2}x_{1}+a(1-g_{1}g_{2})x_{1}) = \tau (q
x_{1}x_{2}x_{1})+a(q ^{2}-1)x_{1}.
\end{eqnarray*}%
This contradicts the choice of $a\neq 0$.\qed
\end{example}

\begin{remark}
In the above example, it is key that $r >2$ and $x_i^2 \neq 0$. In the
examples of dimension $32$ in Section \ref{sec: qlps}, $R$ is not thin, $\xi$
is nontrivial, but $R$ is a bialgebra in $_H^H\mathcal{YD}$ since the image
of $\xi$ lies in the centre of $A$.
\end{remark}

\vspace{2mm}

We now prove the converse to the observation in Remark \ref{rem:
associativity} in the case that $R$ is connected.

\begin{theorem}
\label{te: Rassoc} Let $(R ,\xi )$ be a pre-bialgebra with cocycle in ${%
_{H}^{H}\mathcal{YD}}$. If $R$ is connected, then the following are
equivalent.

\begin{itemize}
\item[(i)] $m_R$ is associative.

\item[(ii)] $\xi \left( z\right) t=\varepsilon \left( z\right) t,$ for every
$z\in R\otimes R,t\in R$.

\item[(iii)] $\Phi \left( \xi \right) =\mathrm{Id} _{R^{\otimes 3}}.$
\end{itemize}
\end{theorem}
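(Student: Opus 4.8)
The plan is to prove the cycle (i)$\Rightarrow$(ii)$\Rightarrow$(iii)$\Rightarrow$(i), of which two directions are immediate. For (ii)$\Leftrightarrow$(iii): since $\xi$ is a dual normalized Sweedler $1$-cocycle we have $\varepsilon_H\circ\xi=\varepsilon_{R\otimes R}$, so condition (ii) says exactly that each element of $\xi(R\otimes R)$ acts on $R$ by $h\cdot t=\varepsilon_H(h)t$, i.e. the action of $\xi(R\otimes R)$ on $R$ is trivial; by Remark \ref{re: Phi} (applied with $C=R\otimes R$, $M=R$) this is equivalent to $\Phi(\xi)=\mathrm{Id}_{R^{\otimes 3}}$. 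For (iii)$\Rightarrow$(i): substituting $\Phi(\xi)=\mathrm{Id}$ into (\ref{eq:YD7'}) gives $m_R(R\otimes m_R)=m_R(m_R\otimes R)$, which is associativity, exactly as observed in Remark \ref{rem: associativity}.

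The substance is (i)$\Rightarrow$(ii), and this is where connectedness is used. Write $C=R\otimes R$, let $\mu_C:=m_R\colon C\to R$, and set $T:=m_R(m_R\otimes R)=m_R(\mu_C\otimes R)\colon C\otimes R\to R$. Combining associativity (i) with (\ref{eq:YD7'}) yields $T=T\circ\Phi(\xi)$. Using the explicit form of $\Phi$ from Lemma \ref{lem: Phi}, for $w\in C$ and $t\in R$ this identity reads
\begin{equation*}
m_R\bigl(\mu_C(w_{(1)})\otimes(\xi(w_{(2)})\cdot t)\bigr)=m_R\bigl(\mu_C(w)\otimes t\bigr),
\end{equation*}
where $w_{(1)}\otimes w_{(2)}=\Delta_C(w)$. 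Because $\xi$ and $\varepsilon_C$ are linear and $\xi(1_C)=1_H$ by (\ref{eq:YD10'}), it suffices to show that $\xi(w)\cdot t=0$ for all $t\in R$ and all $w\in\ker\varepsilon_C$; condition (ii) then follows by splitting $w=\varepsilon_C(w)1_C+\bigl(w-\varepsilon_C(w)1_C\bigr)$.

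I would prove $\xi(w)\cdot t=0$ by induction on the coradical filtration $\{C_n\}$ of $C$, which is available since $R$, and hence $C=R\otimes R$, is connected. The case $w\in C_0\cap\ker\varepsilon_C=0$ is vacuous. For $w\in C_n\cap\ker\varepsilon_C$ with $n\ge1$, use the reduced coproduct $\Delta_C(w)=w\otimes 1_C+1_C\otimes w+\sum w'_{(1)}\otimes w'_{(2)}$ with $w'_{(1)},w'_{(2)}\in\ker\varepsilon_C$ of filtration degree $<n$. Substituting into the displayed identity and using $\xi(1_C)=1_H$, $\mu_C(1_C)=1_R$, $1_H\cdot t=t$, and the unit axiom $m_R(1_R\otimes-)=\mathrm{Id}$, the two copies of $m_R(\mu_C(w)\otimes t)$ cancel and one is left with
\begin{equation*}
\xi(w)\cdot t=-\sum m_R\bigl(\mu_C(w'_{(1)})\otimes(\xi(w'_{(2)})\cdot t)\bigr).
\end{equation*}
By the inductive hypothesis $\xi(w'_{(2)})\cdot t=0$, so the right-hand side vanishes and $\xi(w)\cdot t=0$, closing the induction.

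The main obstacle is organizational rather than computational: the statement must be phrased on $\ker\varepsilon_C$ so that the counit-vanishing, lower-filtration legs of the reduced coproduct feed the inductive hypothesis, and one must invoke that the braided tensor coalgebra $C=R\otimes R$ is connected with reduced coproduct strictly decreasing the filtration. Once the single relation $T=T\circ\Phi(\xi)$ is extracted from (\ref{eq:YD7'}) and associativity, everything else reduces to bookkeeping with the normalizations of $\xi$ and $u_R$.
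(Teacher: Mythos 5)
Your proposal is correct, and the decisive implication (i)$\Rightarrow$(ii) is argued by a genuinely different route than the paper's. Both proofs dispose of (ii)$\Leftrightarrow$(iii) and (iii)$\Rightarrow$(i) the same way, via Remark \ref{re: Phi}, Remark \ref{rem: associativity} and (\ref{eq:YD7'}), and both prove (i)$\Rightarrow$(ii) by an induction resting on connectedness, a reduced-coproduct decomposition, and the normalization (\ref{eq:YD10'}). The difference is where the computation lives. The paper works inside the smash product $A=R\#_{\xi}H$: it inducts on $u+v$ with $r\in R_{u}$, $s\in R_{v}$, expands $\left( r\#1_{H}\right)\left( s\#1_{H}\right)$ by (\ref{form: multi smash xi}) using the reduced coproducts of $r$ and $s$ separately, and compares $r\cdot _{R}\left( s\cdot _{R}t\right)$ with $\left( r\cdot _{R}s\right)\cdot _{R}t$ through $\left( R\otimes \varepsilon _{H}\right)$ and the associativity of $m_{A}$, extracting $\xi \left( r\otimes s\right) t$ from the leftover terms, which the inductive hypothesis kills. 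You instead combine hypothesis (i) with the axiom (\ref{eq:YD7'}) directly to obtain the single identity $m_{R}\left( m_{R}\otimes R\right) =m_{R}\left( m_{R}\otimes R\right)\circ \Phi \left( \xi \right)$, and then induct on the coradical filtration of $C=R\underline{\otimes }R$ itself; the cancellation closes in one line. Your route is shorter, never leaves $R$, and isolates exactly which axioms are used ((\ref{eq:YD7'}), (\ref{eq:YD10'}), connectedness); the paper's route stays inside the splitting-datum formalism and reuses formulas such as (\ref{form: varepsilonHmA}) that it needs elsewhere anyway. When writing yours up, the two facts you invoke as standard should be made explicit: that $R$ connected forces the braided tensor coalgebra $R\underline{\otimes }R$ to be connected (the paper asserts this in Section \ref{sec: pre-bialg}), and that for $w\in C_{n}\cap \ker \varepsilon _{C}$ the decomposition of \cite[Lemma 5.3.2]{Mo} can be arranged with both legs of the reduced coproduct in $C_{n-1}\cap \ker \varepsilon _{C}$ (compose each leg with $\mathrm{Id}-1_{C}\varepsilon _{C}$, which preserves $C_{n-1}$); with these in place your induction is complete exactly as written.
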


\begin{proof}
By Remark \ref{re: Phi} and Remark \ref{rem: associativity} it remains only
to show that $(i)$ implies $(ii)$, i.e., to prove that if $m_R$ is
associative, then
\begin{equation}
\xi \left( r\otimes s\right) t=\varepsilon _{R}\left( r\right) \varepsilon
_{R}\left( s\right) t,\text{ for every }r,s,t\in R.
\label{form: trivial action}
\end{equation}
The argument is by induction on $u + v$ where $r\in R_u$ and $s \in R_v$.
For $u+v=0,1,$ then either $u=0$ or $v=0$ and by (\ref{eq:YD10'}), there is
nothing to show.

Since $R$ is connected, by \cite[Lemma 5.3.2, 2)]{Mo}, for every $n>0$ and $%
r\in R_{n}$ there exists a finite set $I$ and $r_{i},r^{i}\in R_{n-1},$ for
every $i\in I$, such that
\begin{equation*}
\Delta \left( r\right) =1_{R}\otimes r+r\otimes 1_{R}+\sum_{i\in
I}r_{i}\otimes r^{i},
\end{equation*}
and thus
\begin{equation}
\sum_{i\in I}r_{i}\varepsilon _{R}\left( r^{i}\right) =-\varepsilon \left(
r\right) 1_{R} = \sum_{i\in I}\varepsilon _{R}\left( r_{i}\right) r^{i}.
\end{equation}
Recall that by (\ref{form: DeltaAr})
\begin{eqnarray*}
\Delta _{A}\left( r\# 1_H \right) &=& r^{\left( 1\right) }\#\ r^{\left(
2\right) } _{\left\langle -1\right\rangle } \otimes r^{\left( 2\right) }
_{\left\langle 0\right\rangle }\#1_{H} \\
&=& (1_{R}\#r_{\left\langle -1\right\rangle } \otimes r_{\left\langle
0\right\rangle }\#1_{H}) + (r\#\ 1_{R} \otimes 1_{R} \#1_{H}) + \sum_{i\in
I}( r_{i}\#\ r^{i} _{\left\langle -1\right\rangle } \otimes r^{i}
_{\left\langle 0\right\rangle }\#1_{H}).
\end{eqnarray*}

Suppose that the statement holds for $u + v -1$ and let $r\in R_{u}$ with
comultiplication as above and $s\in R_{v} $ with $\Delta_R \left( s\right)
=1_{R}\otimes s+s\otimes 1_{R}+\sum_{j\in J}s_{j}\otimes s^{j} $. Let us
compute $r\cdot _{R}s.$ We have%
\begin{eqnarray*}
&&\left( r\#1_{H}\right) \left( s\#1_{H}\right) \overset{\left( \ref{form:
multi smash xi}\right)}{=} r^{\left( 1\right) }\cdot _{R}\left(
r_{\left\langle -1\right\rangle }^{\left( 2\right) }s^{\left( 1\right)
}\right) \#\xi \left( r_{\left\langle 0\right\rangle }^{\left( 2\right)
}\otimes s^{\left( 2\right) }\right) \\
&=&r_{\left\langle -1\right\rangle }s^{\left( 1\right) }\#\xi \left(
r_{\left\langle 0\right\rangle }\otimes s^{\left( 2\right) }\right) +r\cdot
_{R}s^{\left( 1\right) }\#\xi \left( 1_{R}\otimes s^{\left( 2\right)
}\right) +\sum_{i\in I}r_{i}\cdot _{R}\left( r_{\left\langle -1\right\rangle
}^{i}s^{\left( 1\right) }\right) \#\xi \left( r_{\left\langle 0\right\rangle
}^{i}\otimes s^{\left( 2\right) }\right) \\
&=& r\cdot _{R}s\#1_{H}+r_{\left\langle -1\right\rangle }s^{\left( 1\right)
}\#\xi \left( r_{\left\langle 0\right\rangle }\otimes s^{\left( 2\right)
}\right) +\sum_{i\in I}r_{i}\cdot _{R}\left( r_{\left\langle -1\right\rangle
}^{i}s^{\left( 1\right) }\right) \#\xi \left( r_{\left\langle 0\right\rangle
}^{i}\otimes s^{\left( 2\right) }\right) \\
&=&r\cdot _{R}s\#1_{H}+\left[
\begin{array}{c}
r_{\left\langle -1\right\rangle }1_{R}\#\xi \left( r_{\left\langle
0\right\rangle }\otimes s\right) + \\
+r_{\left\langle -1\right\rangle }s\# \xi \left( r_{\left\langle
0\right\rangle }\otimes 1_{R}\right) + \\
+\sum_{j\in J}r_{\left\langle -1\right\rangle }s_{j}\# \xi \left(
r_{\left\langle 0\right\rangle }\otimes s^{j}\right)%
\end{array}%
\right] +\left[
\begin{array}{c}
\sum_{i\in I}r_{i}\cdot _{R}\left( r_{\left\langle -1\right\rangle
}^{i}1_{R}\right) \#\xi \left( r_{\left\langle 0\right\rangle}^{i}\otimes
s\right) + \\
+\sum_{i\in I}r_{i}\cdot _{R}\left( r_{\left\langle -1\right\rangle
}^{i}s\right) \#\xi \left( r_{\left\langle 0\right\rangle }^{i}\otimes
1_{R}\right) + \\
+\sum_{\substack{ i\in I  \\ j\in J}}r_{i}\cdot _{R}\left( r_{\left\langle
-1\right\rangle }^{i}s_{j}\right) \#\xi \left( r_{\left\langle
0\right\rangle }^{i}\otimes s^{j}\right)%
\end{array}%
\right] \\
&=& r\cdot _{R}s\#1_{H}+\left[
\begin{array}{c}
1_{R}\#\xi \left( r\otimes s\right) + \\
+\varepsilon _{R}\left( r\right) s\#1_{H}+ \\
+\sum_{j\in J}r_{\left\langle -1\right\rangle }s_{j}\#\xi \left(
r_{\left\langle 0\right\rangle }\otimes s^{j}\right)%
\end{array}%
\right] +\left[
\begin{array}{c}
\sum_{i\in I}r_{i}\#\xi \left( r^{i}\otimes s\right) + \\
+\sum_{i\in I}r_{i}\varepsilon _{R}\left( r^{i}\right) \cdot _{R}s\#1_{H}+
\\
+\sum_{\substack{ i\in I  \\ j\in J}}r_{i}\cdot _{R}\left( r_{\left\langle
-1\right\rangle }^{i}s_{j}\right) \#\xi \left( r_{\left\langle
0\right\rangle }^{i}\otimes s^{j}\right)%
\end{array}%
\right]
\end{eqnarray*}

so that%
\begin{equation*}
\left( r\#1_{H}\right) \left( s\#1_{H}\right) =\left[
\begin{array}{c}
r\cdot _{R}s\#1_{H}+1_{R}\#\xi \left( r\otimes s\right) +\sum_{i\in
I}r_{i}\#\xi \left( r^{i}\otimes s\right) + \\
+\sum_{j\in J}r_{\left\langle -1\right\rangle }s_{j}\#\xi \left(
r_{\left\langle 0\right\rangle }\otimes s^{j}\right) +\sum_{\substack{ i\in
I  \\ j\in J}}r_{i}\cdot _{R}\left( r_{\left\langle -1\right\rangle
}^{i}s_{j}\right) \#\xi \left( r_{\left\langle 0\right\rangle }^{i}\otimes
s^{j}\right)%
\end{array}%
\right] .
\end{equation*}

Note that $\left( R\otimes \varepsilon _{H}\right) \left[ \left(
r\#1_{H}\right) \left( s\#h\right) \right] =r\cdot _{R}\left( R\otimes
\varepsilon _{H}\right) \left( s\#h\right) $ so that
\begin{equation*}
\left( R\otimes \varepsilon _{H}\right) \left[ \left( r\#1_{H}\right) \left(
s\#1_{H}\right) \left( t\#1_{H}\right) \right] =r\cdot _{R}\left( R\otimes
\varepsilon _{H}\right) \left[ \left( s\#1_{H}\right) \left( t\#1_{H}\right) %
\right] =r\cdot _{R}\left( s\cdot _{R}t\right)
\end{equation*}%
Then, we have%
\begin{eqnarray*}
0 &=&r\cdot _{R}\left( s\cdot _{R}t\right) -\left( r\cdot _{R}s\right) \cdot
_{R}t \\
&=&\left( R\otimes \varepsilon _{H}\right) \left[ \left( r\#1_{H}\right)
\left( s\#1_{H}\right) \left( t\#1_{H}\right) \right] -\left( R\otimes
\varepsilon _{H}\right) \left[ \left( r\cdot _{R}s\#1_{H}\right) \left(
t\#1_{H}\right) \right] \\
&=&\left( R\otimes \varepsilon _{H}\right) \left\{ \left[ \left(
r\#1_{H}\right) \left( s\#1_{H}\right) -r\cdot _{R}s\#1_{H}\right] \left(
t\#1_{H}\right) \right\} \\
&=&\left( R\otimes \varepsilon _{H}\right) \left[
\begin{array}{c}
\left( 1_{R}\#\xi \left( r\otimes s\right) \right) \left( t\#1_{H}\right) +
\\
+\sum_{i\in I}\left[ r_{i}\#\xi \left( r^{i}\otimes s\right) \right] \left(
t\#1_{H}\right) + \\
+\sum_{j\in J}\left[ r_{\left\langle -1\right\rangle }s_{j}\#\xi \left(
r_{\left\langle 0\right\rangle }\otimes s^{j}\right) \right] \left(
t\#1_{H}\right) + \\
+\sum_{\substack{ i\in I  \\ j\in J}}\left[ r_{i}\cdot _{R}\left(
r_{\left\langle -1\right\rangle }^{i}s_{j}\right) \#\xi \left(
r_{\left\langle 0\right\rangle }^{i}\otimes s^{j}\right) \right] \left(
t\#1_{H}\right)%
\end{array}%
\right].
\end{eqnarray*}

The first term in this sum is clearly $\xi(r \otimes s)t$ and it remains to
show that the other terms add to $-\varepsilon_R(r)\varepsilon_R(s)t$.

Since $r^i \in R_{u-1}$, the second term in the sum above is
\begin{eqnarray*}
&& \left( R\otimes \varepsilon _{H}\right)\left[\sum_{i\in I} (r_{i}\#\xi
\left( r^{i}\otimes s\right) )(t \# 1) \right] \\
&\overset{ (\ref{form: varepsilonHmA}) }{=}& \sum_{i \in I} r_i \cdot_R
(\xi(r^i \#s)t ) \\
&=& \sum_{i \in I} r_i \cdot_R (\varepsilon_R(r^i)\varepsilon_R(s)t) \text{
by the induction hypothesis} \\
&\overset{(\ref{form: trivial action})}{=}&
-\varepsilon_R(r)\varepsilon_R(s)t.
\end{eqnarray*}

A similar computation shows that $(R \otimes \varepsilon_H) (\sum_{j\in J}%
\left[ r_{\left\langle -1\right\rangle }s_{j}\#\xi \left( r_{\left\langle
0\right\rangle }\otimes s^{j}\right) \right] \left( t\#1_{H}\right)) =
-\varepsilon_R(r)\varepsilon_R(s)t$ and that $(R \otimes \varepsilon_H)(\sum
_{\substack{ i\in I  \\ j\in J}}\left[ r_{i}\cdot _{R}\left( r_{\left\langle
-1\right\rangle }^{i}s_{j}\right) \#\xi \left( r_{\left\langle
0\right\rangle }^{i}\otimes s^{j}\right) \right] \left( t\#1_{H}\right)) =
\varepsilon_R(r)\varepsilon_R(s)t$ so that
\begin{equation*}
0 =\xi \left( r\otimes s\right) t-\varepsilon _{R}\left( r\right)
\varepsilon _{H}\left( s\right) t-\varepsilon _{R}\left( r\right)
\varepsilon _{R}\left( s\right) t+\varepsilon _{H}\left( r\right)
\varepsilon _{H}\left( s\right) t = \xi \left( r\otimes s\right)
t-\varepsilon _{R}\left( r\right) \varepsilon _{H}\left( s\right) t
\end{equation*}
and the proof is finished.
\end{proof}

\section{Cocycle deformations of splitting data}

\label{sec: main}

Let $(A,H,\pi,\sigma)$ be a splitting datum with associated pre-bialgebra
with cocycle $(R,\xi)$. In this section, we extend the notion of a cocycle
deformation of $A$ to a cocycle deformation of $R$ and show how these are
related. For $\Gamma$ a finite abelian group, $V$ a crossed $k[\Gamma]$
module and $A = \mathcal{B}(V) \# K[\Gamma]$, then the results we present
should be compared to those in \cite[Section 4]{Grunenfelder-Mastnak}

Recall that if $A$ is a bialgebra, a convolution invertible map $\gamma
:A\otimes A\rightarrow K$ is called a unital (or normalized) 2-cocycle for $%
A $ when for all $x,y,z\in A$,
\begin{eqnarray}
\gamma \left( y_{\left( 1\right) }\otimes z_{\left( 1\right) }\right) \gamma
\left( x\otimes y_{\left( 2\right) }z_{\left( 2\right) }\right) &=&\gamma
\left( x_{\left( 1\right) }\otimes y_{\left( 1\right) }\right) \gamma \left(
x_{\left( 2\right) }y_{\left( 2\right) }\otimes z\right) ,
\label{form: cocycle1 Hopf} \\
\gamma (x\otimes 1) &=&\gamma (1\otimes x)=\varepsilon _{A}(x).
\label{form: cocycle3}
\end{eqnarray}

Note that (\ref{form: cocycle1 Hopf}) holds for all $x,y,z\in A$ if and only
if%
\begin{equation*}
(\varepsilon _{A}\otimes \gamma )\ast \gamma (A\otimes m_{A})=(\gamma
\otimes \varepsilon _{A})\ast \gamma (m_{A}\otimes A).
\label{form:cocycle1 HopfImplicit}
\end{equation*}
  For  a bialgebra $A$ with a subHopf algebra $H$, we denote
by $Z_{H}^{2}\left( A,K\right) $ the space of $H$-bilinear
$2$-cocycles for $A $, i.e., the set of cocycles as defined above
which are also $H$-bilinear.  If $H=K$ we write $Z^{2}(A,K) $
instead of $Z_H^{2}(A,K)$.

\par One may deform or twist $A$ by   any $\gamma \in Z^{2}(A,K)
$
  to get a new bialgebra $A^{\gamma }$%
. (See, for example, \cite[Theorem 1.6]{Doi-braided}.) As a coalgebra, $%
A^{\gamma }=A$, but the multiplication in $A^{\gamma }$ is given by
\begin{equation*}
x\cdot _{\gamma }y=x\cdot _{A^{\gamma }}y:=\gamma \left( x_{\left( 1\right)
}\otimes y_{\left( 1\right) }\right) x_{\left( 2\right) }y_{\left( 2\right)
}\gamma ^{-1}\left( x_{\left( 3\right) }\otimes y_{\left( 3\right) }\right) ,
\end{equation*}%
for all $x,y\in A$. By (\ref{form: cocycle3}), $A^{\gamma }$ has unit $1_{A}$
and condition (\ref{form: cocycle1 Hopf}) implies that the multiplication in
$A^{\gamma }$ is associative if and only if the multiplication in $A$ is
associative. If $A$ is a Hopf algebra with antipode $S$, by \cite[1.6(a5)]%
{Doi-braided} then $\gamma (x_{\left( 1\right) }\otimes S(x_{\left(
2\right) }))\gamma ^{-1}(S(x_{\left( 3\right) })\otimes x_{\left(
4\right) })=\varepsilon (x)$, so that $A^{\gamma }$ is also a Hopf
algebra with antipode given by
\begin{equation*}
S^{\gamma }(x)=\gamma (x_{\left( 1\right) }\otimes S(x_{\left( 2\right)
}))S(x_{\left( 3\right) })\gamma ^{-1}(S(x_{\left( 4\right) })\otimes
x_{\left( 5\right) }).
\end{equation*}

By \cite[1.6(a3)]{Doi-braided}, $\gamma ^{-1}$ is a cocycle for $A^{cop}$
and, as algebras, $A^{\gamma }=(A^{cop})^{\gamma ^{-1}}$.

\begin{definition}
{Let $A$ be a bialgebra and let $\beta ,\gamma :A\otimes A\rightarrow K$ be
  $K$-bilinear maps. Denote by }$_{\beta }A_{\gamma }$ the vector space $A$
endowed with the following not necessarily associative multiplication%
\begin{equation*}
x\cdot _{_{\beta }A_{\gamma }}y=\beta (x_{(1)}\otimes
y_{(1)})x_{(2)}y_{(2)}\gamma (x_{(3)}\otimes y_{(3)}),\text{ for all }x,y\in
A\text{.}
\end{equation*}
\end{definition}

\begin{remark}
\label{re: cocycle iff assoc} For $A,\gamma,\beta$ as above, if
$\gamma
  =\varepsilon_{A\otimes A} $, then we denote ${_{\beta }A_{\gamma
  }}$ simply by
  ${_{\beta }A}$. The multiplication of $_{\beta }A $ is just
denoted by $\ast _{\beta }$ where   $x\ast _{\beta }y=\beta
(x_{(1)}\otimes y_{(1)})x_{(2)}y_{(2)}.$  Then  $\beta$ satisfies
 \eqref{form: cocycle1 Hopf} and \eqref{form: cocycle3}  if and only
if $  (A,\ast _{\beta })$ is an associative algebra with
$1_{A}=1_{(A,\ast _{\beta }) }$. The condition on $1_A$ is
equivalent to $(\ref{form: cocycle3})$. The associativity
statement follows from computing  \begin{eqnarray*}
\lefteqn{(x\ast _{\beta} y)\ast _{\beta} z=\beta (x_{(1)}\otimes
y_{(1)})\beta
(x_{(2)}y_{(2)}\otimes z_{(1)})x_{(3)}y_{(3)}z_{(2)}} \\
&=&[(\beta \otimes \varepsilon _{A})\ast \beta (m_{A}\otimes
A)](x_{(1)}\otimes y_{(1)}\otimes z_{(1)})x_{(2)}y_{(2)}z_{(2)}
\end{eqnarray*}%
and
\begin{eqnarray*}
\lefteqn{x\ast _{\beta}(y\ast _{\beta} z)=\beta (y_{(1)}\otimes
z_{(1)})\beta
(x_{(1)}\otimes y_{(2)}z_{(2)})x_{(2)}y_{(3)}z_{(3)}} \\
&=&[(\varepsilon _{A}\otimes \beta )\ast \beta (A\otimes
m_{A})](x_{(1)}\otimes y_{(1)}\otimes
z_{(1)})x_{(2)}y_{(2)}z_{(2)}
\end{eqnarray*} %
Thus clearly if $\beta $ satisfies (\ref{form: cocycle1 Hopf}),
then $\ast _{\beta }$ is an associative operation, and, applying
$\varepsilon $ to the expressions above, we see that the converse
holds.

Similarly,  if $\beta =\varepsilon _{A\otimes A},$
  we denote
$A_{\gamma }:={_{\beta }A_{\gamma }}.$  The multiplication of $
A_{\gamma }$ will be simply denoted by $\ast ^{\gamma }$  so that
it is defined by $x\ast ^{\gamma }y=x_{(1)}y_{(1)}\gamma
(x_{(2)}\otimes y_{(2)}).$  Then $\ast
^{\gamma }$ is an associative operation if and only if $\gamma $ satisfies (%
\ref{form: cocycle1 Hopf}) for $A^{cop}$. Then   if $A$ is a
bialgebra, $\gamma$ satisfies (\ref{form: cocycle1 Hopf}) and
(\ref{form: cocycle3}) for $A^{cop}$ if and only if $A_{\gamma
}:=(A,\ast ^{\gamma })$ is associative with unit $1_A$.
\par Observe that, for   $\gamma \in Z^{2}(A,K),$
 one has $A^{\gamma
}={_{\gamma }A_{\gamma ^{-1}}}$ as an algebra. \qed
\end{remark}

 The next lemma will be useful in building examples in
the last section of this paper.
\begin{lemma}
\label{lm: building cocycles} For $A$ a bialgebra, let $\beta
,\gamma :A\otimes A\rightarrow K$ be $K$-bilinear convolution
invertible maps. Suppose that $_{\beta }A_{\gamma }$ is an
associative unitary algebra.   Then $\beta\in  Z^{2}(A,K)$ if and
only if $\gamma ^{-1}\in  Z^{2}(A,K)$.
\end{lemma}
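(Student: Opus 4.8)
The plan is to turn the associativity of $_{\beta}A_{\gamma}$ into a scalar identity in $\mathrm{Hom}(A^{\otimes 3},K)$ that compares the cocycle condition for $\beta$ with that for $\gamma^{-1}$. Write $\mu$ for the multiplication of $_{\beta}A_{\gamma}$; as a map $A\otimes A\to A$ it is the convolution product $\mu=\beta\ast m_{A}\ast\gamma$. Associativity of $\mu$ is an identity valued in $A$, and I would apply $\varepsilon_{A}$ to it. Using that $\Delta_{A}$ is an algebra map and that $\varepsilon_{A}m_{A}=m_{K}(\varepsilon_{A}\otimes\varepsilon_{A})$, both composites $\varepsilon_{A}\mu(\mu\otimes A)$ and $\varepsilon_{A}\mu(A\otimes\mu)$ collapse to products of four scalar factors, yielding in $\mathrm{Hom}(A^{\otimes 3},K)$ the identity
\begin{equation*}
P_{\beta}\ast T_{L}=Q_{\beta}\ast T_{R},
\end{equation*}
where $P_{\beta}=(\beta\otimes\varepsilon_{A})\ast\beta(m_{A}\otimes A)$, $Q_{\beta}=(\varepsilon_{A}\otimes\beta)\ast\beta(A\otimes m_{A})$, $T_{L}=\gamma(m_{A}\otimes A)\ast(\gamma\otimes\varepsilon_{A})$ and $T_{R}=\gamma(A\otimes m_{A})\ast(\varepsilon_{A}\otimes\gamma)$. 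Only this $\varepsilon_{A}$-image of associativity will be used; I do not need its converse.

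Next I would use that $\beta$ and $\gamma$ are convolution invertible, so that $P_{\beta},Q_{\beta},T_{L},T_{R}$ are all invertible in $\mathrm{Hom}(A^{\otimes 3},K)$. Recall the reformulation of \eqref{form: cocycle1 Hopf}: a convolution invertible $\delta:A\otimes A\to K$ satisfies \eqref{form: cocycle1 Hopf} if and only if $(\delta\otimes\varepsilon_{A})\ast\delta(m_{A}\otimes A)=(\varepsilon_{A}\otimes\delta)\ast\delta(A\otimes m_{A})$, i.e. $P_{\delta}=Q_{\delta}$. The crucial point is that the $\gamma$-tails are exactly the inverses of the $\gamma^{-1}$-cocycle maps: since inversion reverses a convolution product,
\begin{equation*}
T_{L}^{-1}=(\gamma^{-1}\otimes\varepsilon_{A})\ast\gamma^{-1}(m_{A}\otimes A)=P_{\gamma^{-1}},\qquad T_{R}^{-1}=(\varepsilon_{A}\otimes\gamma^{-1})\ast\gamma^{-1}(A\otimes m_{A})=Q_{\gamma^{-1}}.
\end{equation*}
Hence $\gamma^{-1}$ satisfies \eqref{form: cocycle1 Hopf} if and only if $T_{L}=T_{R}$. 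Cancelling in $P_{\beta}\ast T_{L}=Q_{\beta}\ast T_{R}$ now gives both directions at once: if $P_{\beta}=Q_{\beta}$ then cancelling $P_{\beta}$ on the left yields $T_{L}=T_{R}$, and if $T_{L}=T_{R}$ then cancelling $T_{L}$ on the right yields $P_{\beta}=Q_{\beta}$. Thus $\beta$ satisfies \eqref{form: cocycle1 Hopf} if and only if $\gamma^{-1}$ does.

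It remains to transfer the normalization \eqref{form: cocycle3}, and here I would use that $1_{A}$ is the unit of $_{\beta}A_{\gamma}$. Setting $p=\beta(1_{A}\otimes-)$ and $q=\gamma(1_{A}\otimes-)$ in $A^{\ast}$, the equality $\mu(1_{A}\otimes y)=y$ reads $p\ast\mathrm{Id}_{A}\ast q=\mathrm{Id}_{A}$ in $(\mathrm{End}(A),\ast)$; if $p=\varepsilon_{A}$ then $\mathrm{Id}_{A}\ast q=\mathrm{Id}_{A}$, and applying $\varepsilon_{A}$ forces $q=\varepsilon_{A}$, while the reverse implication is symmetric. The same argument applied to $\mu(y\otimes 1_{A})=y$ relates $\beta(-\otimes 1_{A})$ and $\gamma(-\otimes 1_{A})$. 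Since $\gamma$ is normalized if and only if $\gamma^{-1}$ is (a direct convolution check), this shows $\beta$ satisfies \eqref{form: cocycle3} if and only if $\gamma^{-1}$ does. Combining with the previous paragraph gives $\beta\in Z^{2}(A,K)$ if and only if $\gamma^{-1}\in Z^{2}(A,K)$. I expect the main obstacle to be the bookkeeping in the first step—organizing the eight Sweedler factors so that $\varepsilon_{A}\mu(\mu\otimes A)$ and $\varepsilon_{A}\mu(A\otimes\mu)$ factor cleanly as $P_{\beta}\ast T_{L}$ and $Q_{\beta}\ast T_{R}$—together with spotting that $T_{L}^{-1}=P_{\gamma^{-1}}$ and $T_{R}^{-1}=Q_{\gamma^{-1}}$, which is precisely what makes the cancellation yield the biconditional.
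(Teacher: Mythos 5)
Your proof is correct and is essentially the paper's own argument: your $P_{\beta}$, $Q_{\beta}$, $T_{L}$, $T_{R}$ are exactly the paper's $X(\beta)$, $Y(\beta)$, $\left[X(\gamma^{-1})\right]^{-1}$, $\left[Y(\gamma^{-1})\right]^{-1}$, and both proofs apply $\varepsilon_{A}$ to associativity, cancel invertible factors in the convolution algebra $\mathrm{Hom}(A^{\otimes 3},K)$ to get the biconditional for \eqref{form: cocycle1 Hopf}, and use the unit axiom plus $\varepsilon_{A}$ for \eqref{form: cocycle3}. The only cosmetic difference is that the paper deduces the identity $\beta(1\otimes -)=\gamma^{-1}(1\otimes -)$ outright, while you derive the equivalent conditional statement and transfer normalization from $\gamma$ to $\gamma^{-1}$.
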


\begin{proof}
 For any $K$-bilinear map $\sigma: A\otimes
A\rightarrow K,$  we define maps $X(\sigma),Y(\sigma): A \otimes A
\otimes A \rightarrow K$ by
\begin{eqnarray*}
&& X\left( \sigma \right) \left( a\otimes b\otimes c\right) :=\sigma
(a_{(1)}\otimes b_{(1)})\sigma (a_{(2)}b_{(2)}\otimes c), \\
&& Y\left( \sigma \right) \left( a\otimes b\otimes c\right) :=\sigma
(b_{(1)}\otimes c_{(1)})\sigma (a\otimes b_{(2)}c_{(2)}),
\end{eqnarray*}
 for all $a,b,c \in A$.
Thus $\sigma $ {satisfies (\ref{form: cocycle1 Hopf}) if and only if }$%
X\left( \sigma \right) =Y\left( \sigma \right) .$
We have
\begin{eqnarray*}
\lefteqn{(a\cdot_{_\beta A_\gamma} b)\cdot_{_\beta A_\gamma} c} \\
&=&(\beta (a_{(1)}\otimes b_{(1)})a_{(2)}b_{(2)}\gamma (a_{(3)}\otimes
b_{(3)}))\cdot_{_\beta A_\gamma} c \\
&=&\beta (a_{(1)}\otimes b_{(1)})\beta (a_{(2)}b_{(2)}\otimes
c_{(1)})a_{(3)}b_{(3)}c_{(2)}\gamma (a_{(4)}b_{(4)}\otimes c_{(3)})\gamma
(a_{(5)}\otimes b_{(5)}) \\
&=&X\left( \beta \right) \left( a_{(1)}\otimes b_{(1)}\otimes c_{(1)}\right)
a_{(2)}b_{(2)}c_{(2)}\left[ X\left( \gamma ^{-1}\right) \right] ^{-1}\left(
a_{(3)}\otimes b_{(3)}\otimes c_{(3)}\right) ,
\end{eqnarray*}
and
\begin{eqnarray*}
\lefteqn{ a  \cdot_{_\beta A_\gamma}  (b\cdot_{_\beta A_\gamma} c)} \\
&=&a\cdot_{_\beta A_\gamma} (\beta (b_{(1)}\otimes
c_{(1)})b_{(2)}c_{(2)}\gamma
(b_{(3)}\otimes c_{(3)})) \\
&=&\beta (b_{(1)}\otimes c_{(1)})\beta (a_{(1)}\otimes
b_{(2)}c_{(2)})a_{(2)}b_{(3)}c_{(3)}\gamma (a_{(3)}\otimes
b_{(4)}c_{(4)})\gamma (b_{(5)}\otimes c_{(5)}) \\
&=&Y\left( \beta \right) \left( a_{(1)}\otimes b_{(1)}\otimes c_{(1)}\right)
a_{(2)}b_{(2)}c_{(2)}\left[ Y\left( \gamma ^{-1}\right) \right] ^{-1}\left(
a_{(3)}\otimes b_{(3)}\otimes c_{(3)}\right)
\end{eqnarray*}%
{where }$\left[ X\left( \gamma ^{-1}\right) \right] ^{-1}$ and
$\left[ Y\left( \gamma ^{-1}\right) \right] ^{-1}$ denote the
convolution inverses of $X\left( \gamma ^{-1}\right) $ and $Y\left(
\gamma ^{-1}\right) $ respectively. Since $(a\cdot_{_\beta A_\gamma}
b)\cdot_{_\beta A_\gamma} c=a\cdot_{_\beta A_\gamma} (b\cdot_{_\beta
A_\gamma} c), $ by applying $\varepsilon _{A}$ to both sides we
obtain
\begin{equation*}
X\left( \beta \right) \ast \left[ X\left( \gamma ^{-1}\right) \right]
^{-1}=Y\left( \beta \right) \ast \left[ Y\left( \gamma ^{-1}\right) \right]
^{-1}
\end{equation*}%
that is,
\begin{equation*}
\left[ X\left( \beta \right) \right] ^{-1}\ast Y\left( \beta \right) =\left[
X\left( \gamma ^{-1}\right) \right] ^{-1}\ast Y\left( \gamma ^{-1}\right) .
\end{equation*}%
It is now clear that {$\beta $ satisfies (\ref{form: cocycle1 Hopf}) if and
only if }$\gamma ^{-1}$ does.

We have%
\begin{equation*}
b=1\cdot_{_\beta A_\gamma} b=\beta (1_{(1)}\otimes
b_{(1)})1_{(2)}b_{(2)}\gamma (1_{(3)}\otimes b_{(3)})
\end{equation*}%
so that
\begin{equation*}
b=\beta (1\otimes b_{(1)})b_{(2)}\gamma (1\otimes b_{(3)}).
\end{equation*}%
By applying $\varepsilon _{A}$ to both sides, we obtain $\varepsilon
_{A}\left(
b\right) =\beta (1\otimes b_{(1)})\gamma (1\otimes b_{(2)})$ which yields%
\begin{equation*}
\beta (1\otimes -)=\gamma ^{-1}(1\otimes -).
\end{equation*}%
Similarly $a=a\cdot_{_\beta A_\gamma} 1$ yields $\beta (-\otimes
1)=\gamma ^{-1}(-\otimes
1).$ Therefore {$\beta $ satisfies (\ref{form: cocycle3}) if and only if }$%
\gamma ^{-1}$ does.
\end{proof}

 \vspace{1mm}

\begin{corollary}
\label{co: beattie} For $A$ a  bialgebra,  let $\beta \in
Z^{2}(A,K)$ and  $\gamma \in Z^{2}(A^{\beta },K).$ Then $\gamma \ast
\beta \in Z^{2}(A,K).$
\end{corollary}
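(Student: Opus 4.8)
The plan is to exhibit $\gamma \ast \beta$ as the left cocycle of a suitable \emph{two-sided} twist of $A$ and then invoke Lemma~\ref{lm: building cocycles}. Write $\delta := \gamma \ast \beta$, so that $\delta(x \otimes y) = \gamma(x_{(1)} \otimes y_{(1)})\beta(x_{(2)} \otimes y_{(2)})$, the convolution being taken in $\mathrm{Hom}(A \otimes A, K)$. Since $\beta$ and $\gamma$ are convolution invertible, so is $\delta$ (with inverse $\beta^{-1} \ast \gamma^{-1}$); thus only the cocycle identities \eqref{form: cocycle1 Hopf} and \eqref{form: cocycle3} for $A$ remain to be checked, and I would obtain these \emph{in one stroke} from the Lemma rather than verifying them directly.

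First I would pass to the bialgebra $A^{\beta}$, which is an honest bialgebra because $\beta \in Z^2(A,K)$. As $\gamma \in Z^2(A^{\beta},K)$, Remark~\ref{re: cocycle iff assoc} applied to $A^{\beta}$ says that ${_{\gamma}(A^{\beta})}$ --- the space $A$ with product $x \ast_{\gamma} y = \gamma(x_{(1)} \otimes y_{(1)})(x_{(2)} \cdot_{\beta} y_{(2)})$ --- is an associative unitary algebra. Now I would rewrite this product: expanding $x_{(2)} \cdot_{\beta} y_{(2)} = \beta(x_{(2)} \otimes y_{(2)})\,x_{(3)}y_{(3)}\,\beta^{-1}(x_{(4)} \otimes y_{(4)})$ and collapsing $\gamma(x_{(1)} \otimes y_{(1)})\beta(x_{(2)} \otimes y_{(2)}) = \delta(x_{(1)} \otimes y_{(1)})$ by coassociativity yields
\[
x \ast_{\gamma} y = \delta(x_{(1)} \otimes y_{(1)})\,x_{(2)}y_{(2)}\,\beta^{-1}(x_{(3)} \otimes y_{(3)}),
\]
which is exactly the multiplication of ${_{\delta}A_{\beta^{-1}}}$. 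Hence ${_{\delta}A_{\beta^{-1}}} = {_{\gamma}(A^{\beta})}$ as algebras, and in particular ${_{\delta}A_{\beta^{-1}}}$ is associative and unitary.

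Finally I would apply Lemma~\ref{lm: building cocycles} to $A$, with its two cocycles taken to be $\delta$ and $\beta^{-1}$ (both convolution invertible): since ${_{\delta}A_{\beta^{-1}}}$ is associative and unitary, the Lemma gives $\delta \in Z^2(A,K)$ if and only if $(\beta^{-1})^{-1} = \beta \in Z^2(A,K)$. As $\beta \in Z^2(A,K)$ by hypothesis, this forces $\gamma \ast \beta = \delta \in Z^2(A,K)$, as desired. The one place demanding care is the bookkeeping of the previous paragraph: one must keep in mind that $A^{\beta} = {_{\beta}A_{\beta^{-1}}}$ is two-sided, so that inserting $\gamma$ on the left produces the two-sided twist ${_{\delta}A_{\beta^{-1}}}$ whose \emph{right} cocycle is precisely $\beta^{-1}$. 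It is exactly this matching that makes the ``if and only if'' of Lemma~\ref{lm: building cocycles} relate $\delta$ to $\beta$, and not tautologically to itself, as would happen for the symmetric twist ${_{\delta}A_{\delta^{-1}}} = A^{\delta}$.
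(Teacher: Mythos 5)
Your proposal is correct and follows essentially the same route as the paper's own proof: both use Remark~\ref{re: cocycle iff assoc} to get associativity of ${_{\gamma}(A^{\beta})}$, identify this algebra with ${_{\gamma\ast\beta}A_{\beta^{-1}}}$, and then conclude via Lemma~\ref{lm: building cocycles} from $\beta\in Z^{2}(A,K)$. The only difference is that you spell out the coassociativity computation behind the identification ${_{\gamma}(A^{\beta})}={_{\gamma\ast\beta}A_{\beta^{-1}}}$, which the paper asserts without proof.
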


\begin{proof}
By {Remark \ref{re: cocycle iff assoc}, }$_{\gamma }\left( A^{\beta
}\right) $ is associative. Now $_{\gamma }\left( A^{\beta }\right)
=${$_{\gamma \ast
\beta }A_{\beta ^{-1}}$ }so that, {by Lemma \ref{lm: building cocycles}, }$%
\gamma \ast \beta ${$\in Z^{2}(A,K)$. }
\end{proof}
\vspace{2mm}

 A map $\gamma\in Hom(A
\otimes A,K)$ is called $H$-balanced if $\gamma : A \otimes_H A
\rightarrow K$, in other words, for all $a,a^{\prime }\in A, h \in
H$,
\begin{equation}
\gamma(a\sigma(h) \otimes a^{\prime}) = \gamma(a \otimes
\sigma(h)a^{\prime}).
\end{equation}

\begin{lemma}
\label{lem: balanced}Let $A$ be a bialgebra, $H$ a Hopf algebra and
$\sigma: H \rightarrow A$ a bialgebra monomorphism. Let $\gamma \in
Z^2_H(A,K)$. Then

\begin{enumerate}
\item[(i)] $\gamma$ is $H$-balanced.
\item[(ii)] $\gamma^{-1}$ is also $H$-bilinear and $H$-balanced.
\end{enumerate}
\end{lemma}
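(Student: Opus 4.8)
The plan is to read off both parts from the cocycle identity (\ref{form: cocycle1 Hopf}), the normalization (\ref{form: cocycle3}), and the convolution-inverse lemma for $H$-linear maps recorded in Remark \ref{rem: v e v^-1 lin}. Throughout I regard $A$ as an $H$-bimodule through $\sigma$, so that the $H$-bilinearity of $\gamma$ means the two \emph{outer} identities
\[
\gamma(\sigma(h)a \otimes a') = \varepsilon_H(h)\gamma(a \otimes a'), \qquad \gamma(a \otimes a'\sigma(h)) = \varepsilon_H(h)\gamma(a \otimes a'),
\]
for all $a,a' \in A$, $h \in H$. (That $H$-bilinearity concerns the outer and not the inner factors is forced by the statement, since $H$-balancedness is exactly the nontrivial identity on the inner factors.) Setting $a=1$, resp. $a'=1$, and using (\ref{form: cocycle3}) gives the two boundary evaluations $\gamma(\sigma(h) \otimes b) = \varepsilon_H(h)\varepsilon_A(b) = \gamma(b \otimes \sigma(h))$, and these are the only facts I feed into the cocycle identity.

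For (i), I would instantiate (\ref{form: cocycle1 Hopf}) at $x=a$, $y=\sigma(h)$, $z=a'$, using that $\sigma$ is a coalgebra map so $\Delta_A\sigma(h) = \sigma(h_{(1)}) \otimes \sigma(h_{(2)})$. On the left-hand side the factor $\gamma(\sigma(h_{(1)}) \otimes a'_{(1)})$ collapses to $\varepsilon_H(h_{(1)})\varepsilon_A(a'_{(1)})$ by the boundary evaluation, and absorbing these counits into $\sigma(h_{(2)})a'_{(2)}$ leaves $\gamma(a \otimes \sigma(h)a')$. On the right-hand side the factor $\gamma(a_{(1)} \otimes \sigma(h_{(1)}))$ collapses to $\varepsilon_A(a_{(1)})\varepsilon_H(h_{(1)})$, leaving $\gamma(a\sigma(h) \otimes a')$. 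Equating the two sides yields precisely the $H$-balanced identity $\gamma(a\sigma(h) \otimes a') = \gamma(a \otimes \sigma(h)a')$.

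For (ii), I would first establish $H$-bilinearity of $\gamma^{-1}$ and then deduce $H$-balancedness from part (i) applied to $A^{cop}$. Observe that $C := A \otimes A$ is a left $H$-module coalgebra under $h\cdot(a \otimes a') = \sigma(h)a \otimes a'$: the module-coalgebra axiom is exactly the statement that $\sigma$ is a coalgebra map. Since $\gamma$ is convolution invertible in $\mathrm{Hom}(C,K)$ and left $H$-linear, Remark \ref{rem: v e v^-1 lin}(ii) shows $\gamma^{-1}$ is left $H$-linear; the symmetric argument, using the right $H$-module coalgebra structure $(a \otimes a')\cdot h = a \otimes a'\sigma(h)$, shows $\gamma^{-1}$ is right $H$-linear. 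Hence $\gamma^{-1}$ is $H$-bilinear. Finally, $\gamma^{-1}$ is a $2$-cocycle for $A^{cop}$ (recalled above), and $\sigma: H^{cop} \hookrightarrow A^{cop}$ is again a bialgebra monomorphism; since $H$-bilinearity is a condition on the (unchanged) multiplication, $\gamma^{-1} \in Z^2_H(A^{cop},K)$. Applying part (i) to $A^{cop}$ gives $\gamma^{-1}(a\sigma(h) \otimes a') = \gamma^{-1}(a \otimes \sigma(h)a')$, i.e. $\gamma^{-1}$ is $H$-balanced.

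The computations are all routine once the two boundary evaluations are in hand, and the only real bookkeeping is keeping track of which tensor factor carries the $H$-action. The main obstacle, such as it is, is confirming that the right-hand analogue of Remark \ref{rem: v e v^-1 lin}(ii) is legitimate (it is, by the evident left-right mirror of its proof) and that passing to $A^{cop}$ preserves $H$-bilinearity -- both hold because $H$-bilinearity and $H$-balancedness involve only the algebra structure of $A$, which $A^{cop}$ shares.
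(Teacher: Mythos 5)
Your proof is correct; part (i) is exactly the paper's argument (instantiate (\ref{form: cocycle1 Hopf}) at $y=\sigma(h)$ and collapse the outer factors using $H$-bilinearity together with (\ref{form: cocycle3})), but part (ii) takes a genuinely different route. The paper proves both halves of (ii) by direct ``sandwich'' computations: it writes $\gamma^{-1}(\sigma(h)a\otimes a'\sigma(h'))$, and then $\gamma^{-1}(a\sigma(h)\otimes a')$, as a triple product $\gamma^{-1}\cdot\gamma\cdot\gamma^{-1}$ evaluated on iterated coproducts, applies the $H$-bilinearity of $\gamma$ (respectively, the balancedness of $\gamma$ from part (i)) to the middle factor, and lets the convolution identities collapse the rest. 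You instead obtain $H$-bilinearity of $\gamma^{-1}$ from Remark \ref{rem: v e v^-1 lin}(ii), observing that $A\otimes A$ with $h\cdot(a\otimes a')=\sigma(h)a\otimes a'$ is a left $H$-module coalgebra (note this uses multiplicativity of $\Delta_A$ as well as $\sigma$ being a coalgebra map), plus its right-handed mirror; and you obtain balancedness by invoking Doi's fact, recalled in Section \ref{sec: main}, that $\gamma^{-1}$ is a cocycle for $A^{cop}$, and re-applying part (i) to $(A^{cop},\sigma)$. Your route is more economical -- the Remark already encapsulates the sandwich trick, so nothing is recomputed -- while the paper's is self-contained. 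Two small points you should make explicit in a final write-up: the normalization condition (\ref{form: cocycle3}) for $\gamma^{-1}$, needed for membership in $Z^2_H(A^{cop},K)$, must be deduced from that of $\gamma$ (routine, by convolving with $\gamma$); and part (i) as stated assumes the embedded algebra is a Hopf algebra, whereas $H^{cop}$ carries an antipode only when $S_H$ is bijective, which is not assumed -- this is harmless because neither your proof of (i) nor the paper's ever uses the antipode, so (i) is really a statement about bialgebra embeddings, but it deserves a sentence.
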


\begin{proof}
\par (i) By applying (\ref{form: cocycle1 Hopf}) with $y=\sigma(h) $ and
using $H$-bilinearity of $\gamma$, we get that $\gamma $ is
$H$-balanced.

\par (ii)  For $a,a^{\prime }\in A,h,h^{\prime }\in H$, we have%
\begin{eqnarray*}
&&\gamma ^{-1}\left( \sigma (h)a\otimes a^{\prime }\sigma (h^{\prime
})\right) \\
&=&\gamma ^{-1}\left( \sigma (h)a_{\left( 1\right) }\otimes
a_{\left( 1\right) }^{\prime }\sigma (h^{\prime })\right) \gamma
\left( a_{\left( 2\right) }\otimes a_{\left( 2\right) }^{\prime
}\right) \gamma ^{-1}\left(
a_{\left( 3\right) }\otimes a_{\left( 3\right) }^{\prime }\right) \\
&=&\gamma ^{-1}\left( \sigma (h)_{\left( 1\right) }a_{\left(
1\right) }\otimes a_{\left( 1\right) }^{\prime }\sigma (h^{\prime
})_{\left( 1\right) }\right) \gamma \left( \sigma (h)_{\left(
2\right) }a_{\left( 2\right) }\otimes a_{\left( 2\right) }^{\prime
}\sigma (h)_{\left( 2\right) }^{\prime }\right) \gamma ^{-1}\left(
a_{\left( 3\right) }\otimes a_{\left( 3\right)
}^{\prime }\right) \\
&=&\varepsilon _{H}\left( h\right) \gamma ^{-1}\left( a\otimes
a^{\prime }\right) \varepsilon _{H}\left( h^{\prime }\right) ,
\end{eqnarray*}%
and so $\gamma ^{-1}$ is $H$-bilinear. Similarly, write $\gamma
^{-1}(a\sigma (h)\otimes a^{\prime })$ as $\gamma ^{-1}(a_{1}\sigma
(h_{1})\otimes a_{1}^{\prime })\gamma (a_{2}\otimes \sigma
(h_{2})a_{2}^{\prime })\gamma ^{-1}(a_{3}\otimes \sigma
(h_{3})a_{3}^{\prime })$, to see that that $\gamma ^{-1}$ is
$H$-balanced.
\end{proof}

\vspace{1mm}
\begin{lemma}
\label{re: splitting} Let $(A,H,\pi ,\sigma )$ be a splitting datum and let $%
\gamma \in Z_{H}^{2}(A,K)$. Then $(A^{\gamma },H,\pi ,\sigma )$ is also a
splitting datum with $A^{co\pi }={A^{\gamma }}^{co\pi }$ as coalgebras in $%
_{H}^{H}\mathcal{YD}$.
\end{lemma}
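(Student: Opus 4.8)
The plan is to show that everything follows from one identity: $H$-bilinearity of the cocycle forces left and right multiplication by $\sigma(H)$ to be undeformed. Precisely, the first step is to prove that for all $h\in H$ and $a\in A$,
\[
\sigma(h)\cdot_\gamma a = \sigma(h)a \qquad\text{and}\qquad a\cdot_\gamma\sigma(h) = a\sigma(h),
\]
where $\cdot_\gamma$ denotes the deformed product. For the left identity I expand $\sigma(h)\cdot_\gamma a = \gamma(\sigma(h)_{(1)}\otimes a_{(1)})\,\sigma(h)_{(2)}a_{(2)}\,\gamma^{-1}(\sigma(h)_{(3)}\otimes a_{(3)})$ and use that $\sigma$ is a coalgebra map, so $\sigma(h)_{(i)}=\sigma(h_{(i)})$. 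Left $H$-linearity of $\gamma$ together with $\gamma(1_A\otimes-)=\varepsilon_A$ (from (\ref{form: cocycle3})) collapses the first factor to $\varepsilon_H(h_{(1)})\varepsilon_A(a_{(1)})$; by Lemma \ref{lem: balanced}(ii), $\gamma^{-1}$ is $H$-bilinear, and it is moreover normalized, being the convolution inverse of the normalized cocycle $\gamma$, so the third factor collapses to $\varepsilon_H(h_{(3)})\varepsilon_A(a_{(3)})$. The counit axioms then reassemble the surviving terms into $\sigma(h)a$. The right identity is symmetric, using right $H$-linearity.

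Granting this, the splitting-datum axioms for $(A^\gamma,H,\pi,\sigma)$ are almost immediate. That $A^\gamma$ is a bialgebra is the cocycle-twist construction recalled at the start of this section. Since $A^\gamma=A$ as a coalgebra, $\pi$ and $\sigma$ remain coalgebra homomorphisms and $\pi\sigma=\mathrm{Id}_H$ automatically. Setting $a=\sigma(h')$ in the displayed identity gives $\sigma(h)\cdot_\gamma\sigma(h')=\sigma(h)\sigma(h')=\sigma(hh')$, so $\sigma$ is also an algebra map, hence a bialgebra map into $A^\gamma$. Finally, the identity says precisely that the $H$-bimodule structure on $A^\gamma$ induced by $\sigma$ agrees with that on $A$; hence $\pi(\sigma(h)\cdot_\gamma x\cdot_\gamma\sigma(h'))=\pi(\sigma(h)x\sigma(h'))=h\pi(x)h'$, so $\pi$ is $H$-bilinear for $A^\gamma$.

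For the identification of coinvariants, I note that $A^{co\pi}=\{a\mid a_{(1)}\otimes\pi(a_{(2)})=a\otimes 1_H\}$ depends only on $\Delta$ and $\pi$, which are unchanged; thus ${A^\gamma}^{co\pi}$ and $A^{co\pi}$ are the same subset of the common underlying coalgebra. Its structure as a coalgebra in $_{H}^{H}\mathcal{YD}$ is built, as in Section \ref{sec: A co H}, from $\Delta$, $\varepsilon$, $\pi$, $\sigma$, the map $\tau(a)=a_{(1)}\sigma S\pi(a_{(2)})$, and the Yetter--Drinfeld action and coaction. The coaction $\rho(r)=\pi(r_{(1)})\otimes r_{(2)}$ and the comultiplication (\ref{form: comulti R}) use only $\Delta$ and $\pi$, while $\tau$ and the action $h\cdot r=\sigma(h_{(1)})r\sigma S(h_{(2)})$ involve only multiplication by $\sigma(H)$; by the displayed identity all of these are the same in $A^\gamma$ as in $A$. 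Hence the two coinvariant objects coincide as coalgebras in $_{H}^{H}\mathcal{YD}$.

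I expect the only genuine computation to be the displayed identity, after which every remaining assertion is formal. The one point requiring care is that $H$-bilinearity (and normalization) must be invoked for $\gamma^{-1}$ as well as for $\gamma$ --- exactly what Lemma \ref{lem: balanced}(ii) provides --- since both factors of the deformed product must be shown to trivialize on $\sigma(H)$.
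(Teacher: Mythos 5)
Your proposal is correct and takes essentially the same route as the paper's proof: both hinge on the identity $\sigma(h)\cdot_\gamma a=\sigma(h)a$ and $a\cdot_\gamma\sigma(h)=a\sigma(h)$, obtained from $H$-bilinearity and normalization of $\gamma$ together with Lemma \ref{lem: balanced}(ii) for $\gamma^{-1}$, after which the splitting-datum axioms and the equality $\tau_\gamma=\tau$ (hence the identification of the coinvariant coalgebras in $_{H}^{H}\mathcal{YD}$) follow formally.
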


\begin{proof}
Since $A^{\gamma }=A$ as coalgebras, in order to prove that $(A^{\gamma
},H,\pi ,\sigma )$ is a splitting datum we have to check that $\sigma $ is
an algebra homomorphism and that $\pi $ is $H$-bilinear. Since both $\gamma $
and $\gamma ^{-1}$ are $H$-bilinear, for $h,h^{\prime }\in H$ and $a\in A,$
we get%
\begin{eqnarray*}
\sigma \left( h\right) \cdot _{\gamma }a &=&\gamma \left( \sigma \left(
h_{\left( 1\right) }\right) \otimes a_{\left( 1\right) }\right) \sigma
\left( h_{\left( 2\right) }\right) a_{\left( 2\right) }\gamma ^{-1}\left(
\sigma \left( h_{\left( 3\right) }\right) \otimes a_{\left( 3\right)
}\right)  \\
&=&\gamma \left( 1_{A}\otimes a_{\left( 1\right) }\right) \sigma \left(
h\right) a_{\left( 2\right) }\gamma ^{-1}\left( 1_{A}\otimes a_{\left(
3\right) }\right) =\sigma \left( h\right) a.
\end{eqnarray*}%
Similarly $a\cdot _{\gamma }\sigma \left( h\right) =a\sigma \left( h\right) .
$ Thus $\sigma \left( h\right) \cdot _{\gamma }\sigma \left( h^{\prime
}\right) =\sigma \left( h\right) \sigma \left( h^{\prime }\right) =\sigma
\left( hh^{\prime }\right) $ and $\pi \left( \sigma \left( h\right) \cdot
_{\gamma }a\cdot _{\gamma }\sigma \left( h^{\prime }\right) \right) =\pi
\left( \sigma \left( h\right) a\sigma \left( h^{\prime }\right) \right)
=h\pi \left( a\right) h^{\prime }$ for all $h,h^{\prime }\in H$ and $a\in A.$
Hence $(A^{\gamma },H,\pi ,\sigma )$ is a splitting datum. The corresponding
map $\tau _{\gamma }:A^{\gamma }\rightarrow R,$ as in \ref{sec: A co H} is
given by
\begin{equation*}
\tau _{\gamma }\left( a\right) =a_{\left( 1\right) }\cdot _{\gamma }\sigma
S\pi \left( a_{\left( 2\right) }\right) =a_{\left( 1\right) }\sigma S\pi
\left( a_{\left( 2\right) }\right) =\tau \left( a\right) .
\end{equation*}%
Using this fact, the last part of the statement follows by definition of the
coalgebra structures of $A^{co\pi }$ and ${A^{\gamma }}^{co\pi }$ in $%
_{H}^{H}\mathcal{YD}$ as given in \ref{sec: A co H}.
\end{proof}

\vspace{1mm}  Now we offer an appropriate definition for a
2-cocycle $  \upsilon :R\otimes R\rightarrow K$.

\begin{definition}
\label{def: Rcocycle} A convolution invertible map $\upsilon: R
\otimes R \rightarrow K$ (where $R\otimes R$ has the coalgebra
structure in \ref{sec: ydcoalgebras}) is called a unital 2-cocycle
for $(R,\xi)$ if for $\Phi(\xi) \in \mathrm{{End}(R \otimes R
\otimes R)}$ from Lemma \ref{lem: Phi},
\begin{eqnarray}
\left( \varepsilon _{R}\otimes \upsilon \right) \ast \upsilon \left(
R\otimes m_{R}\right) &=&\left( \upsilon \otimes \varepsilon
_{R}\right) \ast \left\{ \upsilon \left( m_{R}\otimes R\right) \Phi
\left( \xi \right)
\right\}, \text{ and }  \label{form: cocycle1} \\
\upsilon \left( - \otimes 1_{H}\right) &=&\varepsilon _{H} =\upsilon
\left( 1_{H}\otimes -\right).  \label{form: cocycle2}
\end{eqnarray}
\end{definition}

\vspace{2mm}
\par We will denote by  $Z^2_H(R,K)$   the space of left $H$-linear
2-cocycles for $R$. \vspace{1mm}
\par Given $\upsilon \in Z_H^{2}\left( R,K\right)$, let $R^{\upsilon
}$ be the
coalgebra $R \in {_{H}^{H}\mathcal{YD}}$ with multiplication defined by%
\begin{equation*}
m_{R^{\upsilon }}:=\left( \upsilon \otimes m_{R}\otimes \upsilon
^{-1}\right) \Delta _{R\otimes R}^{2} \text{ and unit } u_{R^\upsilon} = u_R.
\end{equation*}

We will see in Theorem \ref{teo: smash gamma} that $R^\upsilon$ is also a
pre-bialgebra with cocycle.

\vspace{2mm}

\begin{lemma}
\label{lem:RA} Let $(R, \xi)$ be a pre-bialgebra with cocycle and $(A = R
\#_\xi H,H,\pi,\sigma)$ be the associated splitting datum. Let $\phi: A
\otimes A \rightarrow K$ be $H$-bilinear and $H$-balanced. Then for $r,s,t
\in R$, $h \in H$, the following hold.
\begin{eqnarray}
\phi \left[ r\#1_{H}\otimes \left( s\#1_{H}\right) \left( t\#1_{H}\right) %
\right] &=& \phi \left( r\#1_{H}\otimes st \#1_{H}\right) ,
\label{form: utile gamma 2} \\
\phi \left( r\#h\otimes s\#1_{H}\right)&=& \phi \left( r\#1_{H}\otimes
hs\#1_{H}\right) ,  \label{form: utile gamma} \\
\phi(hr \# 1_H \otimes s \# 1_H)&=& \phi(r \# 1_H \otimes S(h) s \# 1_H).
\label{form: utile gamma 1}
\end{eqnarray}
\end{lemma}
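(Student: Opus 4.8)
The plan is to reduce all three identities to a single observation, that a right $H$-linear map only sees the ``$R$-part'' of its second argument, combined with the projection formula \eqref{form: varepsilonHmA}. Throughout I identify $r\in R$ with $r\#1_{H}\in A=R\#_{\xi}H$ and record two elementary product identities in $A$. First, $(r\#1_{H})\sigma(h)=r\#h$: this is immediate from \eqref{form: multi smash xi} once one notes that $1_{R}$ is grouplike in the Yetter--Drinfeld sense \eqref{eq:YD0'} and that $\xi(-\otimes 1_{R})=\varepsilon 1_{H}$ by \eqref{eq:YD10'}. Second, the realization of the module action as an adjoint action, $(h\cdot r)\#1_{H}=\sigma(h_{(1)})(r\#1_{H})\sigma(S(h_{(2)}))$, which is precisely the action defining the Yetter--Drinfeld structure of $R\cong(R\#_{\xi}H)^{co\pi}$ described in Section \ref{sec: A co H}. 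Finally I spell out the hypotheses on $\phi$: being $H$-bilinear means $\phi(\sigma(h)a\otimes a')=\varepsilon_{H}(h)\phi(a\otimes a')$ and $\phi(a\otimes a'\sigma(h))=\phi(a\otimes a')\varepsilon_{H}(h)$, while being $H$-balanced means $\phi(a\sigma(h)\otimes a')=\phi(a\otimes\sigma(h)a')$.

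The key step I would isolate first is the following reduction principle: for every $a\in A$,
\begin{equation*}
\phi(r\#1_{H}\otimes a)=\phi\big(r\#1_{H}\otimes [(R\otimes\varepsilon_{H})(a)]\#1_{H}\big).
\end{equation*}
Indeed, writing $a=\sum_{i}r_{i}\#h_{i}$ and using $r_{i}\#h_{i}=(r_{i}\#1_{H})\sigma(h_{i})$, right $H$-linearity replaces each $\sigma(h_{i})$ by the scalar $\varepsilon_{H}(h_{i})$, and $K$-linearity of $\phi$ in its second argument collects the result into $(R\otimes\varepsilon_{H})(a)=\sum_{i}r_{i}\varepsilon_{H}(h_{i})$. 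Granting this, \eqref{form: utile gamma 2} is immediate: apply the principle to $a=(s\#1_{H})(t\#1_{H})=m_{A}(s\#1_{H}\otimes t\#1_{H})$ and invoke \eqref{form: varepsilonHmA} with $h=h'=1_{H}$ to get $(R\otimes\varepsilon_{H})(a)=m_{R}(s\otimes t)=st$.

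For \eqref{form: utile gamma} I would first move the $H$-component across the tensor by balancedness: since $r\#h=(r\#1_{H})\sigma(h)$, we get $\phi(r\#h\otimes s\#1_{H})=\phi(r\#1_{H}\otimes\sigma(h)(s\#1_{H}))$. Now the reduction principle applies to $a=\sigma(h)(s\#1_{H})=m_{A}(1_{R}\#h\otimes s\#1_{H})$, and \eqref{form: varepsilonHmA} gives $(R\otimes\varepsilon_{H})(a)=m_{R}(1_{R}\otimes hs)=hs$, which is exactly the right-hand side of \eqref{form: utile gamma}. Then \eqref{form: utile gamma 1} follows by combining the adjoint realization with left $H$-linearity and the identity just proved: from $hr\#1_{H}=\sigma(h_{(1)})(r\#1_{H})\sigma(S(h_{(2)}))$, stripping $\sigma(h_{(1)})$ by left $H$-linearity (so that $\varepsilon_{H}(h_{(1)})S(h_{(2)})=S(h)$) gives $\phi(hr\#1_{H}\otimes s\#1_{H})=\phi((r\#1_{H})\sigma(S(h))\otimes s\#1_{H})=\phi(r\#S(h)\otimes s\#1_{H})$, and \eqref{form: utile gamma} with $h$ replaced by $S(h)$ turns this into $\phi(r\#1_{H}\otimes S(h)s\#1_{H})$.

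I expect the only genuine work to lie in verifying the two product identities in $A$ used at the start, namely $(r\#1_{H})\sigma(h)=r\#h$ and the adjoint form of the $H$-action; both are routine unwindings of \eqref{form: multi smash xi}, but they require careful use of the grouplike element $1_{R}$, its coaction \eqref{eq:YD0'}, and the normalization \eqref{eq:YD10'} of $\xi$. Once these and the reduction principle are in place, the three formulas are formal consequences of the $H$-bilinearity and $H$-balancedness of $\phi$, with \eqref{form: varepsilonHmA} performing all of the remaining computation.
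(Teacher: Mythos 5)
Your proposal is correct and takes essentially the same approach as the paper: your ``reduction principle'' is precisely the paper's use of right $H$-linearity together with the identity $(x\#1_{H})\sigma(h)=x\#h$, and your proofs of (\ref{form: utile gamma}) (balancedness, then strip the $H$-leg) and of (\ref{form: utile gamma 1}) (adjoint factorization of $hr\#1_{H}$, bilinearity, then (\ref{form: utile gamma}) applied to $S(h)$) coincide with the paper's. The only cosmetic difference is that you invoke (\ref{form: varepsilonHmA}) where the paper expands the smash products inline.
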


\begin{proof}
The first statement holds since, using right $H$-linearity at the second
step,
\begin{eqnarray*}
\phi \left[ r\#1_{H}\otimes \left( s\#1_{H}\right) \left( t\#1_{H}\right) %
\right] &=&\phi \left[ r\#1_{H}\otimes \left( m_{R}\otimes \xi \right)
\Delta _{R\otimes R}\left( s\otimes t\right) \right] \\
&=&\phi \left[ r\#1_{H}\otimes \left( m_{R}\otimes u_H \varepsilon _{H}\xi
\right) \Delta _{R\otimes R}\left( s\otimes t\right) \right] \\
&=&\phi \left( r\#1_{H}\otimes st\#1_{H}\right)
\end{eqnarray*}%
The second equation follows from:
\begin{eqnarray*}
\phi \left( r\#h\otimes s\#1_{H}\right) &=&\phi \left[ \left(
r\#1_{H}\right) \left( 1_{R}\#h\right) \otimes s\#1_{H}\right] \\
&=&\phi \left[ r\#1_{H}\otimes \left( 1_{R}\#h\right) \left( s\#1_{H}\right) %
\right] \text{ ($\phi$ $H$-balanced)} \\
&=&\phi \left( r\#1_{H}\otimes h_{\left( 1\right) }s\#h_{\left( 2\right)
}\right) \\
&=&\phi \left[ r\#1_{H}\otimes \left( h_{\left( 1\right) }s\#1_{H}\right)
\left( 1_{R}\#h_{\left( 2\right) }\right) \right] \\
&=&\phi \left( r\#1_{H}\otimes h_{\left( 1\right) }s\#1_{H}\right)
\varepsilon _{H}\left( h_{\left( 2\right) }\right) \text{($\phi$ $H$%
-bilinear)} \\
&=&\phi \left( r\#1_{H}\otimes hs\#1_{H}\right)
\end{eqnarray*}%
Finally we check (\ref{form: utile gamma 1}).
\begin{eqnarray*}
\phi \left( hr\#1_{H}\otimes s\#1_{H}\right) &=&\phi \left[ \left(
1_{R}\#h_{\left( 1\right) }\right) \left( r\#S\left( h_{\left( 2\right)
}\right) \right) \otimes s\#1_{H}\right] \\
&=&\varepsilon _{H}\left( h_{\left( 1\right) }\right) \phi \left[ \left(
r\#S\left( h_{\left( 2\right) }\right) \right) \otimes s\#1_{H}\right] \text{%
($H$-balanced)} \\
&=&\phi \left( r\#S\left( h\right) \otimes s\#1_{H}\right) \\
&\overset{(\ref{form: utile gamma})}{=}&\phi \left( r\#1_{H}\otimes S\left(
h\right) s\#1_{H}\right) .
\end{eqnarray*}
\end{proof}

Now let $\mathcal{BB}(A)$ denote the set of $H$-bilinear $H$-balanced maps
from $A \otimes A$ to $K$ and $\mathcal{L}(R)$ the set of left $H$-linear
maps from $R \otimes R$ to $K$. The next proposition sets the stage for our
first theorem.

\begin{proposition}
Let $(R, \xi)$ be a pre-bialgebra with cocycle and $(A = R \#_\xi
H,H,\pi,\sigma)$ be the associated splitting datum. There is a bijective
correspondence between $\mathcal{BB}(A)$ and $\mathcal{L }(R)$ given by:
\begin{eqnarray*}
&& \Omega: Hom(A \otimes A ,K) \rightarrow Hom(R \otimes R,K) \text{ by }
\gamma \mapsto \gamma _{R} :=\gamma _{\mid R\otimes R} \text{ with inverse}
\\
&& \Omega^{\prime }: Hom(R \otimes R,K)\rightarrow Hom(A \otimes A,K) \text{
by } \upsilon \mapsto \upsilon_A:= \upsilon\circ(R \otimes \mu \otimes
\varepsilon_H ), \\
\hspace{1mm} && \text{ so that } \upsilon_A \left( x\#h\otimes y\#h^{\prime
}\right) =\upsilon \left( x\otimes hy\right) \varepsilon _{H}\left(
h^{\prime }\right) \text{ for each } h\in H,r,s\in R.
\end{eqnarray*}

Furthermore $\mathcal{BB}(A)$ and $\mathcal{L}(R)$ are both closed under the
convolution product and $\Omega$ and $\Omega^{\prime }$ preserve convolution.
\end{proposition}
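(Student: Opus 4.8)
The plan is to treat the four assertions in order: that $\Omega$ and $\Omega'$ land in the claimed sets, that they are mutually inverse, and finally that the correspondence is convolution-preserving, with closure of $\mathcal{BB}(A)$ and $\mathcal{L}(R)$ falling out of the same computations. The two structural facts I would use repeatedly, both reading off (\ref{form: multi smash xi}), are $r\#h=(r\#1_{H})\sigma(h)$ and $\sigma(h)(r\#1_{H})=(h_{(1)}r)\#h_{(2)}$ in $A=R\#_{\xi}H$; these let me convert any multiplication by $\sigma(H)$ into an $H$-action on $R$ together with an $H$-slot, and conversely. Together with the three transport identities of Lemma \ref{lem:RA} they reduce every verification to bookkeeping on the $H$-components.

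First I would check $\Omega$ is well defined, i.e. that $\gamma_{R}$ is left $H$-linear for $\gamma\in\mathcal{BB}(A)$. Applying (\ref{form: utile gamma 1}) with $h_{(1)}$ to the first tensorand rewrites $\gamma_{R}(h_{(1)}r\otimes h_{(2)}s)$ as $\gamma(r\#1_{H}\otimes S(h_{(1)})(h_{(2)}\cdot s)\#1_{H})$, and $S(h_{(1)})h_{(2)}=\varepsilon_{H}(h)1_{H}$ collapses this to $\varepsilon_{H}(h)\gamma_{R}(r\otimes s)$. For $\Omega'$ I would verify that $\upsilon_{A}$ is $H$-bilinear and $H$-balanced: left $H$-linearity of $\upsilon_{A}$ follows from $\sigma(k)a=(k_{(1)}x)\#k_{(2)}h$ and the left $H$-linearity of $\upsilon$ for the diagonal action on $R\otimes R$; right $H$-linearity reduces to $a'\sigma(k')=y\#h'k'$ and $\varepsilon_{H}(h'k')=\varepsilon_{H}(h')\varepsilon_{H}(k')$; and the balanced condition amounts to comparing $\upsilon(x\otimes h(ky))$ arising from $a\sigma(k)=x\#hk$ with the one arising from $\sigma(k)a'=(k_{(1)}y)\#k_{(2)}h'$, the two agreeing after $\varepsilon_{H}(k_{(2)})k_{(1)}=k$.

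Next I would establish that the maps are mutual inverses. One direction is immediate, since $\upsilon_{A}(r\#1_{H}\otimes s\#1_{H})=\upsilon(r\otimes 1_{H}\cdot s)=\upsilon(r\otimes s)$, giving $\Omega\Omega'=\mathrm{Id}$. For $\Omega'\Omega=\mathrm{Id}$ I would start from $\gamma(x\#h\otimes y\#h')$, strip $\sigma(h')$ off the right by right $H$-linearity to obtain $\varepsilon_{H}(h')\gamma(x\#h\otimes y\#1_{H})$, and then apply (\ref{form: utile gamma}) to reach $\varepsilon_{H}(h')\gamma_{R}(x\otimes h\cdot y)$, which is exactly $(\gamma_{R})_{A}(x\#h\otimes y\#h')$.

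The main work, and the step I expect to be the real obstacle, is compatibility with convolution, since there the braided coalgebra structure of $R\otimes R$ must be matched against the smash-coproduct comultiplication of $A$. I would prove $\Omega(\gamma\ast\gamma')=\gamma_{R}\ast\gamma'_{R}$ by expanding $(\gamma\ast\gamma')(r\#1_{H}\otimes s\#1_{H})$ through (\ref{form: DeltaAr}) on each factor, so that the $H$-legs $r^{(2)}_{\langle-1\rangle}$ and $s^{(2)}_{\langle-1\rangle}$ appear. I would then absorb $\sigma(s^{(2)}_{\langle-1\rangle})$ into the second convolution factor by right $H$-linearity, using $\varepsilon_{H}(s^{(2)}_{\langle-1\rangle})s^{(2)}_{\langle0\rangle}=s^{(2)}$, and convert the first factor by (\ref{form: utile gamma}) so that $r^{(2)}_{\langle-1\rangle}$ acts on $s^{(1)}$; the result is $\gamma_{R}(r^{(1)}\otimes r^{(2)}_{\langle-1\rangle}s^{(1)})\,\gamma'_{R}(r^{(2)}_{\langle0\rangle}\otimes s^{(2)})$, which is $(\gamma_{R}\ast\gamma'_{R})(r\otimes s)$ by the formula for $\Delta_{R\underline{\otimes}R}$ in (\ref{form: DeltaRR}). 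The delicate point is threading the $\rho$-legs of $r^{(2)}$ and $s^{(2)}$ correctly through both factors. Closure of $\mathcal{BB}(A)$ and $\mathcal{L}(R)$ under convolution I would read off in passing: the former because $\sigma$ is a bialgebra map, so $\Delta_{A}$ respects $\sigma(H)$ on the correct sides and the $\varepsilon_{H}$-identities pass through a convolution product; the latter because $\Delta_{R\otimes R}$ is a morphism in $_{H}^{H}\mathcal{YD}$, hence left $H$-linear. Finally, since $\Omega$ and $\Omega'$ are mutually inverse and $\Omega$ sends $\varepsilon_{A}\otimes\varepsilon_{A}$ to $\varepsilon_{R\otimes R}$, multiplicativity of $\Omega'$ and the assertion that both are convolution-preserving bijections follow immediately.
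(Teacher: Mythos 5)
Your proposal is correct and follows essentially the same route as the paper's proof: the same well-definedness checks via the transport identities of Lemma \ref{lem:RA}, the same inverse-bijection verifications, and the same key convolution computation (you run it from the $A$-side to the $R$-side while the paper runs it from the $R$-side to the $A$-side, but it is the identical chain of equalities, including the subtle re-insertion of the coaction leg $s^{(2)}_{\langle -1\rangle}$ via right $H$-linearity). The only substantive deviation is your closure argument for $\mathcal{L}(R)$: where the paper verifies left $H$-linearity of $\upsilon\ast\upsilon'$ by an explicit Yetter--Drinfeld computation, you invoke the categorical fact that $\Delta_{R\otimes R}$ is a morphism in $_{H}^{H}\mathcal{YD}$, hence left $H$-linear --- a cleaner but equivalent justification.
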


\begin{proof}
Let $\gamma \in \mathcal{BB}(A)$ and we wish to show that $\Omega (\gamma
)=\gamma _{R}$ is in $\mathcal{L}(R)$. By (\ref{form: utile gamma 1}), we
have $\gamma _{R}\left( hr\otimes s\right) =\gamma _{R}\left( r\otimes
S\left( h\right) s\right) $ and thus
\begin{equation}
\gamma _{R}\left( h_{(1)}r\otimes h_{(2)}s\right) =\gamma _{R}\left(
r\otimes S(h_{(1)})h_{(2)}s\right) = \varepsilon_H (h)\gamma _{R}(r\otimes
s),  \notag
\end{equation}
and $\gamma_R$ is left $H$-linear. Conversely suppose $\upsilon \in \mathcal{%
L}(R)$ and check that $\Omega ^{\prime }(\upsilon )=\upsilon _{A}$ is $H$%
-bilinear. For $h,h^{\prime },l,m\in H$ and $x,y\in R$,
\begin{eqnarray*}
\upsilon _{A}\left[ \left( 1_{R}\#l\right) \left( x\#h\right) \otimes \left(
y\#h^{\prime }\right) \left( 1_{R}\#m\right) \right] &=&\upsilon _{A}\left(
l_{\left( 1\right) }x\#l_{\left( 2\right) }h\otimes y\#h^{\prime }m\right) \\
&\overset{\text{defn}}{=}&\upsilon \left( l_{\left( 1\right) }x\otimes
l_{\left( 2\right) }hy\right) \varepsilon _{H}\left( h^{\prime }m\right) \\
&\overset{}{=}&\varepsilon _{H}\left( l\right) \upsilon \left(
x\otimes hy\right) \varepsilon _{H}\left( h^{\prime }m\right)
\\
&=&\varepsilon _{H}\left( l\right) \upsilon _{A}\left( x\#h\otimes
y\#h^{\prime }\right) \varepsilon _{H}\left( m\right) .
\end{eqnarray*}
The fact that $\upsilon_A$ is $H$-balanced follows directly from the
definition.

For $r,s\in R$ and $h,m\in H$, we have that for $\gamma \in \mathcal{BB}(A)$%
,
\begin{equation}
[\Omega ^{\prime }(\gamma _{R})](r\#h\otimes s\#m) =\gamma _{R}(r\otimes hs)
\varepsilon_H (m) =\gamma (r\#1\otimes hs\#m)=\gamma (r\#h\otimes s\#m),
\notag
\end{equation}
and for $\upsilon \in \mathcal{L}(R)$,
\begin{equation}
[\Omega (\upsilon _{A})](r\otimes s) =\upsilon _{A}(r\#1\otimes
s\#1)=\upsilon (r\otimes s).  \notag
\end{equation}

Thus $\Omega $ and $\Omega ^{\prime }$ are inverse bijections. For $\gamma
,\gamma ^{\prime }\in \mathcal{BB}(A)$, it is clear that $\gamma \ast \gamma
^{\prime }$ is $H$-bilinear and $H$-balanced. Also for $\upsilon ,\upsilon
^{\prime }\in \mathcal{L}(R)$, $h\in H$, $r,s\in R$,
\begin{eqnarray*}
(\upsilon \ast \upsilon ^{\prime })(h_{(1)}r\otimes h_{(2)}s) &=&\upsilon
(h_{(1)}r^{(1)}\otimes (h_{(2)}r^{(2)})_{\langle -1\rangle
}h_{(3)}s^{(1)})\upsilon ^{\prime }((h_{(2)}r^{2})_{\langle 0\rangle
}\otimes h_{(4)}s^{(2)}) \\
&=&\upsilon (h_{(1)}r^{(1)}\otimes h_{(2)}r_{\langle -1\rangle
}^{(2)}S(h_{(4)})h_{(5)}s^{(1)})\upsilon ^{\prime }(h_{(3)}r_{\langle
0\rangle }^{2}\otimes h_{(6)}s^{(2)}) \\
&=&\varepsilon _{H}(h)\upsilon (r^{(1)}\otimes r_{\langle -1\rangle
}^{(2)}s^{(1)})\upsilon ^{\prime }(r_{\langle 0\rangle }^{(2)}\otimes
s^{(2)}) \\
&=&\varepsilon _{H}(h)(\upsilon \ast \upsilon ^{\prime })(r\otimes s).
\end{eqnarray*}%
Thus $\mathcal{BB}(A)$ and $\mathcal{L}(R)$ are closed under convolution and
it remains to show that $\Omega ,\Omega ^{\prime }$ are convolution
preserving. First we let $\gamma ,\gamma ^{\prime }\in \mathcal{BB}(A)$ and
we check that $\gamma _{R}\ast \gamma _{R}^{\prime }=(\gamma \ast \gamma
^{\prime })_{R}$. For every $x,y\in R,$ we have
\begin{eqnarray*}
\left( \gamma _{R}\ast \gamma _{R}^{\prime }\right) \left( x\otimes y\right)
&=&\gamma _{R}\left[ \left( x\otimes y\right) ^{\left( 1\right) }\right]
\gamma _{R}^{\prime }\left[ \left( x\otimes y\right) ^{\left( 2\right) }%
\right] \\
&=&\gamma _{R}\left( x^{\left( 1\right) }\otimes x_{\left\langle
-1\right\rangle }^{\left( 2\right) }y^{\left( 1\right) }\right) \gamma
_{R}^{\prime }\left( x_{\left\langle 0\right\rangle }^{\left( 2\right)
}\otimes y^{\left( 2\right) }\right) \\
&=&\gamma \left( x^{\left( 1\right) }\#1_{H}\otimes x_{\left\langle
-1\right\rangle }^{\left( 2\right) }y^{\left( 1\right) }\#1_{H}\right)
\gamma ^{\prime }\left( x_{\left\langle 0\right\rangle }^{\left( 2\right)
}\#1_{H}\otimes y^{\left( 2\right) }\#1_{H}\right) \\
&\overset{(\ref{form: utile gamma})}{=}&\gamma \left[ x^{\left( 1\right)
}\#x_{\left\langle -1\right\rangle }^{\left( 2\right) }\otimes y^{\left(
1\right) }\#1_{H}\right] \gamma ^{\prime }\left( x_{\left\langle
0\right\rangle }^{\left( 2\right) }\#1_{H}\otimes y^{\left( 2\right)
}\#1_{H}\right) \\
&=&\gamma \left[ x^{\left( 1\right) }\#x_{\left\langle -1\right\rangle
}^{\left( 2\right) }\otimes y^{\left( 1\right) }\#y_{\left\langle
-1\right\rangle }^{\left( 2\right) }\right] \gamma ^{\prime }\left(
x_{\left\langle 0\right\rangle }^{\left( 2\right) }\#1_{H}\otimes
y_{\langle0\rangle}^{\left( 2\right) }\#1_{H} \right) \\
&=&\gamma \left[ \left( x\#1_{H}\right) _{\left( 1\right) }\otimes \left(
y\#1_{H}\right) _{\left( 1\right) }\right] \gamma ^{\prime }\left( \left(
x\#1_{H}\right) _{\left( 2\right) }\otimes \left( y\#1_{H}\right) _{\left(
2\right) }\right) \\
&=&\left( \gamma \ast \gamma ^{\prime }\right) \left( x\#1_{H}\otimes
y\#1_{H}\right) =\left( \gamma \ast \gamma ^{\prime }\right) _{R}(x\otimes
y).
\end{eqnarray*}%
Finally, to see that $\Omega ^{\prime }(\upsilon \ast \upsilon ^{\prime
})=\Omega ^{\prime }(\upsilon )\ast \Omega ^{\prime }(\upsilon ^{\prime })$,
apply $\Omega $ to both sides and use the fact that $\Omega $ is one-one and
convolution preserving.
\end{proof}

In fact, $\Omega$ maps 2-cocycles to 2-cocycles.

\begin{theorem}
\label{te:main1} Let $(R,\xi )$ be a pre-bialgebra with cocycle with $%
(A=R\#_{\xi }H,H,\pi ,\sigma )$ the associated splitting datum. Then $\Omega
$ and $\Omega ^{\prime }$ as above are inverse bijections between $%
Z_{H}^{2}\left( A,K\right) $ and $Z_{H}^{2}\left( R,K\right) $.
\end{theorem}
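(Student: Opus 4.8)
The plan is to build on the preceding Proposition, which already provides convolution-preserving inverse bijections $\Omega,\Omega'$ between the larger spaces $\mathcal{BB}(A)$ and $\mathcal{L}(R)$. Since by Lemma \ref{lem: balanced} every $\gamma\in Z_H^2(A,K)$ is $H$-bilinear and $H$-balanced, hence lies in $\mathcal{BB}(A)$, while every $\upsilon\in Z_H^2(R,K)$ is by definition left $H$-linear, hence lies in $\mathcal{L}(R)$, it suffices to show that under the correspondence $\upsilon=\gamma_R=\Omega(\gamma)$ the map $\gamma$ satisfies the $A$-cocycle identities (\ref{form: cocycle1 Hopf}) and (\ref{form: cocycle3}) if and only if $\upsilon$ satisfies the $R$-cocycle identities (\ref{form: cocycle1}) and (\ref{form: cocycle2}). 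Convolution invertibility then transfers for free: a one-line check gives $\Omega(\varepsilon_{A\otimes A})=\varepsilon_{R\otimes R}$ and $\Omega'(\varepsilon_{R\otimes R})=\varepsilon_{A\otimes A}$, so since $\Omega,\Omega'$ preserve convolution, $\gamma$ is convolution invertible precisely when $\gamma_R$ is, with $(\gamma_R)^{-1}=(\gamma^{-1})_R$.

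The normalization equivalence, $(\ref{form: cocycle3})\Leftrightarrow(\ref{form: cocycle2})$, is immediate from the explicit formulas for $\Omega,\Omega'$: for instance $\gamma_R(r\otimes 1_R)=\gamma(r\#1_H\otimes 1_A)=\varepsilon_A(r\#1_H)=\varepsilon_R(r)$, and conversely $\upsilon_A(x\#h\otimes 1_A)=\upsilon(x\otimes\varepsilon_H(h)1_R)=\varepsilon_A(x\#h)$, the left-hand normalizations being handled symmetrically.

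The heart of the argument is the equivalence of the multiplicative cocycle identities, and this I expect to be the main obstacle. I would rewrite (\ref{form: cocycle1 Hopf}) in operator form as $(\varepsilon_A\otimes\gamma)\ast\gamma(A\otimes m_A)=(\gamma\otimes\varepsilon_A)\ast\gamma(m_A\otimes A)$ and evaluate both sides on triples $(r\#1_H,s\#1_H,t\#1_H)$ with $r,s,t\in R$. Using the coproduct (\ref{eq: DeltaA}) of $A$, the product formula (\ref{form: multi smash xi})--(\ref{eq: BmultPhi}) on $A=R\#_\xi H$, and the reduction identities of Lemma \ref{lem:RA} to strip off and reposition the $H$-legs produced by the braided coproduct on $R\otimes R$, I expect each side to collapse to the corresponding side of (\ref{form: cocycle1}) evaluated at $r\otimes s\otimes t$. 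The operator $\Phi(\xi)$ on the right of (\ref{form: cocycle1}) is forced here precisely because $m_A$ is twisted by $\xi$ (cf. (\ref{eq: BmultPhi})): computing $m_A(r\#1_H\otimes s\#1_H)$ produces an $H$-component $\xi(\cdots)$ which then acts on the remaining factor, and that action is exactly $\Phi(\xi)$ of Lemma \ref{lem: Phi}. Matching the Sweedler and Yetter--Drinfeld bookkeeping of the $\langle -1\rangle,\langle 0\rangle$ legs across the two sides is the delicate and error-prone computation.

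Finally, the backward inclusion requires promoting the identity from $R\#1_H$ to all of $A$; the forward inclusion needs no promotion, since restricting the $A$-identity (valid everywhere) to $R\#1_H$ already yields (\ref{form: cocycle1}) by the computation above. Because $A=R\#_\xi H$ is spanned by $r\#h=(r\#1_H)\sigma(h)$, and both $\gamma=\upsilon_A$ and, by Lemma \ref{lem: balanced}(ii), $\gamma^{-1}$ are $H$-bilinear and $H$-balanced, the $\sigma(h)$-factors in a general triple can be moved across the middle tensor slot (balancedness) and absorbed on the outer slots (bilinearity); equivalently, via Remark \ref{re: cocycle iff assoc}, associativity of $\ast_\gamma$ on $A$ follows, using $\sigma(h)\ast_\gamma a=\sigma(h)a$ and $a\ast_\gamma\sigma(h)=a\sigma(h)$, from its associativity on $R\#1_H$. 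This gives both $\Omega(Z_H^2(A,K))\subseteq Z_H^2(R,K)$ and $\Omega'(Z_H^2(R,K))\subseteq Z_H^2(A,K)$, and together with the bijectivity of $\Omega,\Omega'$ between $\mathcal{BB}(A)$ and $\mathcal{L}(R)$ already established, completes the proof that they restrict to inverse bijections between the two cocycle spaces.
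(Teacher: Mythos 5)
Your proposal is correct and shares the paper's overall skeleton (build on the Proposition's bijection $\mathcal{BB}(A)\leftrightarrow\mathcal{L}(R)$, dispose of the normalizations (\ref{form: cocycle3}), (\ref{form: cocycle2}) by the explicit formulas, then match the multiplicative identities (\ref{form: cocycle1 Hopf}) and (\ref{form: cocycle1}) by direct computation with (\ref{eq: DeltaA}) and (\ref{form: multi smash xi})), but it organizes the key computation differently. The paper evaluates both sides of (\ref{form: cocycle1 Hopf}) for $\gamma=\upsilon_A$ on \emph{general} elements $x\#h\otimes y\#h'\otimes z\#h''$ and shows each equals the corresponding side of (\ref{form: cocycle1}) at $(x\otimes h_{(1)}y\otimes h_{(2)}h'z)\,\varepsilon(h'')$; both implications then fall out of this single computation, with no promotion step, the converse being the specialization $h=h'=h''=1$. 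You instead compute only on triples from $R\#1_H$ and recover the general case by a separate promotion argument via Remark \ref{re: cocycle iff assoc}: associativity of the one-sided twist $\ast_{\upsilon_A}$ on all of $A$ follows from associativity on $R\#1_H$, using $H$-bilinearity, $H$-balancedness, the relation $\sigma(h)(r\#1_H)=(h_{(1)}r\#1_H)\sigma(h_{(2)})$, and linearity. This promotion does work (the three needed compatibilities $(u\sigma(h))\ast_{\upsilon_A}v=u\ast_{\upsilon_A}(\sigma(h)v)$, $(\sigma(h)u)\ast_{\upsilon_A}v=\sigma(h)(u\ast_{\upsilon_A}v)$, $u\ast_{\upsilon_A}(v\sigma(h))=(u\ast_{\upsilon_A}v)\sigma(h)$ are exactly bilinearity and balancedness of $\upsilon_A$), and it buys a lighter core computation, at the cost of an extra lemma that the paper's unified calculation renders unnecessary. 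Two small repairs to your write-up: the appeal to Lemma \ref{lem: balanced}(ii) for $\gamma^{-1}$ in the backward direction is circular as stated, since that lemma presupposes $\gamma\in Z^2_H(A,K)$, which is what you are trying to prove; but it is also dispensable, because the one-sided twist of Remark \ref{re: cocycle iff assoc} never involves $\gamma^{-1}$, and in any case $(\upsilon_A)^{-1}=(\upsilon^{-1})_A$ lies in $\mathcal{BB}(A)$ by Remark \ref{rem: v e v^-1 lin}(ii) together with the Proposition. Relatedly, your ``invertibility transfers for free'' step needs inverses to remain inside $\mathcal{BB}(A)$ and $\mathcal{L}(R)$, since the Proposition only establishes convolution-preservation on those subspaces; this is again supplied by Remark \ref{rem: v e v^-1 lin}(ii) on the $R$ side and by the argument proving Lemma \ref{lem: balanced}(ii) (which for this part uses only bilinearity and invertibility, not the cocycle identity) on the $A$ side. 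Finally, your central collapse computation is only sketched, but it is precisely the $h=h'=h''=1$ instance of the computation the paper carries out in full, including the use of left $H$-linearity of $\upsilon$ to absorb the Yetter--Drinfeld legs $x_{\langle -1\rangle}$ coming from $\Delta_{R\otimes R\otimes R}$, so the approach is sound.
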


\begin{proof}
First we note that clearly $\Omega, \Omega^{\prime }$ preserve the normality
conditions $(\ref{form: cocycle3})$ and $(\ref{form: cocycle2})$. It remains
to show that $\Omega, \Omega^{\prime }$ are compatible with the cocycle
conditions (\ref{form: cocycle1 Hopf}) and (\ref{form: cocycle1}).

Let $\upsilon \in Z_{H}^{2}\left( R,K\right) $. We will show that for $%
x,y,z\in R$, $h,h^{\prime },h^{\prime \prime }\in H$, then the left (right)
hand side of (\ref{form: cocycle1 Hopf}) for $\gamma =\upsilon _{A}$ acting
on $x\#h\otimes y\#h^{\prime }\otimes z\#h^{\prime \prime }$ equals the left
(right) hand side of (\ref{form: cocycle1}) applied to $(x\otimes
h_{1}y\otimes h_{2}h^{\prime }z)\varepsilon (h^{\prime \prime })$. Thus $%
\upsilon _{A}$ satisfies (\ref{form: cocycle1 Hopf}) if and only if $%
\upsilon $ satisfies (\ref{form: cocycle1}). (To see the \textquotedblleft
if\textquotedblright\ implication, let $1=h=h^{\prime }=h^{\prime \prime }$%
.) We start with the left hand side of $(\ref{form: cocycle1 Hopf})$ with $%
\gamma =\upsilon _{A}$.
\begin{eqnarray}
&& (\varepsilon _{A}\otimes \upsilon _{A} )\ast \upsilon _{A}(A\otimes
m_{A})(x\#h\otimes y\#h^{\prime }\otimes z\#h^{\prime \prime })
\label{eq: lhs} \\
&=&\upsilon _{A}\left( \left( y\#h^{\prime }\right) _{\left( 1\right)
}\otimes \left( z\#h^{\prime \prime }\right) _{\left( 1\right) }\right)
\upsilon _{A}\left( x\#h\otimes \left( y\#h^{\prime }\right) _{\left(
2\right) }\left( z\#h^{\prime \prime }\right) _{\left( 2\right) }\right)
\notag \\
&\overset{(\ref{eq: DeltaA})}{=}&\upsilon _{A}\left( y^{\left( 1\right)
}\#y_{\left\langle -1\right\rangle }^{\left( 2\right) }h_{\left( 1\right)
}^{\prime }\otimes z^{\left( 1\right) }\#z_{\left\langle -1\right\rangle
}^{\left( 2\right) }h_{\left( 1\right) }^{\prime \prime }\right) \upsilon
_{A}\left( x\#h\otimes \left( y_{\left\langle 0\right\rangle }^{\left(
2\right) }\#h_{\left( 2\right) }^{\prime }\right) \left( z_{\left\langle
0\right\rangle }^{\left( 2\right) }\#h_{\left( 2\right) }^{\prime \prime
}\right) \right).  \notag
\end{eqnarray}%
By (\ref{form: multi smash xi}) and the right $H$-linearity of $\upsilon
_{A} $, we have
\begin{equation}
\upsilon _{A}\left( x\#h\otimes \left( y_{\left\langle 0\right\rangle
}^{\left( 2\right) }\#h_{\left( 2\right) }^{\prime }\right) \left(
z_{\left\langle 0\right\rangle }^{\left( 2\right) }\#h_{\left( 2\right)
}^{\prime \prime }\right) \right) =\upsilon _{A}\left( x\#h\otimes
y_{\left\langle 0\right\rangle }^{\left( 2\right) }\left( h_{\left( 2\right)
}^{\prime }z_{\left\langle 0\right\rangle }^{\left( 2\right) }\right)
\#1_{H}\right) \varepsilon_H (h_{\left( 2\right) }^{\prime \prime }) .
\notag
\end{equation}%
Thus, from the definition of $\Omega ^{\prime }$, expression (\ref{eq: lhs})
is equal to
\begin{eqnarray*}
&&\upsilon \left( y^{\left( 1\right) }\otimes y_{\left\langle
-1\right\rangle }^{\left( 2\right) }h_{\left( 1\right) }^{\prime }z^{\left(
1\right) }\right) \varepsilon _{H}\left( z_{\left\langle -1\right\rangle
}^{\left( 2\right) }h_{\left( 1\right) }^{\prime \prime }\right) \upsilon
\left( x\otimes h\left( y_{\left\langle 0\right\rangle }^{\left( 2\right)
}\left( h_{\left( 2\right) }^{\prime }z_{\left\langle 0\right\rangle
}^{\left( 2\right) }\right) \right) \right) \varepsilon _{H}\left( h_{\left(
2\right) }^{\prime \prime }\right) \\
&=&\upsilon \left( y^{\left( 1\right) }\otimes y_{\left\langle
-1\right\rangle }^{\left( 2\right) }h_{\left( 1\right) }^{\prime }z^{\left(
1\right) }\right) \upsilon \left( x\otimes h\left( y_{\left\langle
0\right\rangle }^{\left( 2\right) }\left( h_{\left( 2\right) }^{\prime
}z^{\left( 2\right) }\right) \right) \right) \varepsilon _{H}\left(
h^{\prime \prime }\right).
\end{eqnarray*}%
Then we use left $H$-linearity to obtain that
\begin{eqnarray*}
&&\upsilon \left( y^{\left( 1\right) }\otimes y_{\left\langle
-1\right\rangle }^{\left( 2\right) }h_{\left( 1\right) }^{\prime }z^{\left(
1\right) }\right) \upsilon \left( x\otimes h\left( y_{\left\langle
0\right\rangle }^{\left( 2\right) }\left( h_{\left( 2\right) }^{\prime
}z^{\left( 2\right) }\right) \right) \right) \\
&=&\upsilon \left( x_{\left\langle -2\right\rangle }h_{\left( 1\right)
}y^{\left( 1\right) }\otimes x_{\left\langle -1\right\rangle }h_{\left(
2\right) }y_{\left\langle -1\right\rangle }^{\left( 2\right) }S\left(
h_{\left( 4\right) }\right) \left( h_{\left( 5\right) }h^{\prime }z\right)
^{\left( 1\right) }\right) \upsilon \left( x_{\left\langle 0\right\rangle
}\otimes \left( h_{\left( 3\right) }y_{\left\langle 0\right\rangle }^{\left(
2\right) }\right) \left( \left( h_{\left( 5\right) }h^{\prime }z\right)
^{\left( 2\right) }\right) \right) \\
&=&\upsilon \left( x_{\left\langle -2\right\rangle }\left( h_{\left(
1\right) }y\right) ^{\left( 1\right) }\otimes x_{\left\langle
-1\right\rangle }\left( h_{\left( 1\right) }y\right) _{\left\langle
-1\right\rangle }^{\left( 2\right) }\left( h_{\left( 2\right) }h^{\prime
}z\right) ^{\left( 1\right) }\right) \upsilon \left( x_{\left\langle
0\right\rangle }\otimes \left( h_{\left( 1\right) }y\right) _{\left\langle
0\right\rangle }^{\left( 2\right) }\left( h_{\left( 2\right) }h^{\prime
}z\right) ^{\left( 2\right) }\right)
\end{eqnarray*}%
and thus we have that (\ref{eq: lhs}) equals
\begin{equation*}
\left[ \left( \varepsilon _{R}\otimes \upsilon \right) \ast \upsilon \left(
R\otimes m_{R}\right) \right] \left( x\otimes h_{\left( 1\right) }y\otimes
h_{\left( 2\right) }h^{\prime }z\right) \varepsilon _{H}\left( h^{\prime
\prime }\right)
\end{equation*}%
as claimed. Now we tackle the right hand side of $(\ref{form: cocycle1 Hopf}%
) $ for $\gamma =\upsilon _{A}$.
\begin{eqnarray*}
&& (\upsilon _{A}\otimes \varepsilon _{A}) \ast \upsilon _{A}(m_{A}\otimes
A)(x\#h\otimes y\#h^{\prime }\otimes z\#h^{\prime \prime }) \\
&=&\upsilon _{A}\left( \left( x\#h\right) _{\left( 1\right) }\otimes \left(
y\#h^{\prime }\right) _{\left( 1\right) }\right) \upsilon _{A}\left[ \left(
x\#h\right) _{\left( 2\right) }\left( y\#h^{\prime }\right) _{\left(
2\right) }\otimes z\#h^{\prime \prime }\right] \\
&\overset{(\ref{eq: DeltaA})}{=}&\upsilon _{A}\left( x^{\left( 1\right)
}\#x_{\left\langle -1\right\rangle }^{\left( 2\right) }h_{\left( 1\right)
}\otimes y^{\left( 1\right) }\#y_{\left\langle -1\right\rangle }^{\left(
2\right) }h_{\left( 1\right) }^{\prime }\right) \upsilon _{A}\left[ \left(
x_{\left\langle 0\right\rangle }^{\left( 2\right) }\#h_{\left( 2\right)
}\right) \left( y_{\left\langle 0\right\rangle }^{\left( 2\right)
}\#h_{\left( 2\right) }^{\prime }\right) \otimes z\#h^{\prime \prime }\right]
\\
&=&\upsilon \left( x^{\left( 1\right) }\otimes x_{\left\langle
-1\right\rangle }^{\left( 2\right) }h_{\left( 1\right) }y^{\left( 1\right)
}\right) \varepsilon _{H}\left( y_{\left\langle -1\right\rangle }^{\left(
2\right) }h_{\left( 1\right) }^{\prime }\right) \upsilon _{A}\left[ \left(
x_{\left\langle 0\right\rangle }^{\left( 2\right) }\#h_{\left( 2\right)
}\right) \left( y_{\left\langle 0\right\rangle }^{\left( 2\right)
}\#h_{\left( 2\right) }^{\prime }\right) \otimes z\#h^{\prime \prime }\right]
\\
&=&\upsilon \left( x^{\left( 1\right) }\otimes x_{\left\langle
-1\right\rangle }^{\left( 2\right) }h_{\left( 1\right) }y^{\left( 1\right)
}\right) \upsilon _{A}\left[ \left( x_{\left\langle 0\right\rangle }^{\left(
2\right) }\#h_{\left( 2\right) }\right) \left( y^{\left( 2\right)
}\#h^{\prime }\right) \otimes z\#h^{\prime \prime }\right] .
\end{eqnarray*}%
But by (\ref{form: multi smash xi}),
\begin{eqnarray*}
\left( x_{\left\langle 0\right\rangle }^{\left( 2\right) }\#h_{\left(
2\right) }\right) \left( y^{\left( 2\right) }\#h^{\prime }\right) &=&\left(
R\otimes m_{H}\right) \left[ \left( m_{R}\otimes \xi \right) \Delta
_{R\otimes R}\otimes H\right] \left( x_{\left\langle 0\right\rangle
}^{\left( 2\right) }\otimes h_{\left( 2\right) }y^{\left( 2\right) }\otimes
h_{\left( 3\right) }h^{\prime }\right) \\
&=&\left( x_{\left\langle 0\right\rangle }^{\left( 2\right) }\right)
^{\left( 1\right) }\left( \left( x_{\left\langle 0\right\rangle }^{\left(
2\right) }\right) _{\left\langle -1\right\rangle }^{\left( 2\right)
}h_{\left( 2\right) }y^{\left( 2\right) }\right) \#\xi \left[ \left(
x_{\left\langle 0\right\rangle }^{\left( 2\right) }\right) _{\left\langle
0\right\rangle }^{\left( 2\right) }\otimes h_{\left( 3\right) }y^{\left(
3\right) }\right] h_{\left( 4\right) }h^{\prime },
\end{eqnarray*}%
and
\begin{eqnarray*}
&&\upsilon _{A}\left[ \left( x_{\left\langle 0\right\rangle }^{\left(
2\right) }\right) ^{\left( 1\right) }\left( \left( x_{\left\langle
0\right\rangle }^{\left( 2\right) }\right) _{\left\langle -1\right\rangle
}^{\left( 2\right) }h_{\left( 2\right) }y^{\left( 2\right) }\right) \#\xi %
\left[ \left( x_{\left\langle 0\right\rangle }^{\left( 2\right) }\right)
_{\left\langle 0\right\rangle }^{\left( 2\right) }\otimes h_{\left( 3\right)
}y^{\left( 3\right) }\right] h_{\left( 4\right) }h^{\prime }\otimes
z\#h^{\prime \prime }\right] \\
&=&\upsilon \left[ \left( x_{\left\langle 0\right\rangle }^{\left( 2\right)
}\right) ^{\left( 1\right) }\left( \left( x_{\left\langle 0\right\rangle
}^{\left( 2\right) }\right) _{\left\langle -1\right\rangle }^{\left(
2\right) }h_{\left( 2\right) }y^{\left( 2\right) }\right) \otimes \xi \left[
\left( x_{\left\langle 0\right\rangle }^{\left( 2\right) }\right)
_{\left\langle 0\right\rangle }^{\left( 2\right) }\otimes h_{\left( 3\right)
}y^{\left( 3\right) }\right] h_{\left( 4\right) }h^{\prime }z\right]
\varepsilon _{H}\left( h^{\prime \prime }\right)
\end{eqnarray*}%
Thus $\upsilon _{A}(A\otimes A\otimes \varepsilon _{A})\ast \upsilon
_{A}(m_{A}\otimes A)(x\#h\otimes y\#h^{\prime }\otimes z\#h^{\prime \prime
}) $ is exactly
\begin{equation*}
\left[ \left( \upsilon \otimes \varepsilon _{R}\right) \ast \left\{ \upsilon
\left( m_{R}\otimes R\right) \Phi \left( \xi \right) \right\} \right] \left(
x\otimes h_{\left( 1\right) }y\otimes h_{\left( 2\right) }h^{\prime
}z\right) \varepsilon _{H}\left( h^{\prime \prime }\right) .
\end{equation*}%
This proves the theorem.
\end{proof}

Recall from Lemma \ref{re: splitting} that if $(A,H,\pi,\sigma)$ is a
splitting datum with associated pre-bialgebra with cocycle $(R, \xi)$, and
if $\gamma \in Z^2_H(A,K)$, then $(A^\gamma, H,\pi,\sigma)$ is also a
splitting datum and has associated pre-bialgebra with cocycle $(R, \eta)$
for some $\eta$. The next theorem describes this relationship more precisely.

\begin{theorem}
\label{teo: smash gamma}Let $(R,\xi )$ be a pre-bialgebra with cocycle, and
let $\gamma \in Z_{H}^{2}(R\#_{\xi }H,K)$. Define $\xi _{\gamma
_{R}}:R^{\gamma _{R}}\otimes R^{\gamma _{R}}\rightarrow H$ by
\begin{equation*}
\xi _{\gamma _{R}}=u_{H}\gamma _{R}\ast \xi \ast \Psi (\gamma
_{R}^{-1})=u_{H}\gamma _{R}\ast \xi \ast \left( H\otimes \gamma
_{R}^{-1}\right) \rho _{R\otimes R}.
\end{equation*}
Let $(A:=R\#_{\xi }H,H,\pi ,\sigma )$ be the splitting datum of \ref{sec:
splittingdatum} so that by \ref{sec: correspondence}, the associated
pre-bialgebra with cocycle is $(R \otimes K, \xi (\theta \otimes \theta))$,
where $\theta:R\otimes K\rightarrow R$ is the usual isomorphism. Then $%
\left( A^{\gamma }=(R\#_{\xi }H\right) ^{\gamma },H,\pi ,\sigma )$ is also a
splitting datum whose associated pre-bialgebra with cocycle is $(R\otimes
K,\xi _{\gamma _{R}}(\theta\otimes \theta))$. Furthermore $(R^{\gamma },\xi
_{\gamma _{R}})$ is a pre-bialgebra with cocycle isomorphic to $(R\otimes
K,\xi _{\gamma _{R}})$ via $\theta$ and
\begin{equation*}
A^{\gamma }=\left( R\#_{\xi }H\right) ^{\gamma }=R^{\gamma _{R}}\#_{\xi
_{\gamma _{R}}}H.
\end{equation*}
\end{theorem}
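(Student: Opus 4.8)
The plan is to identify the pre-bialgebra with cocycle attached to the splitting datum $(A^{\gamma},H,\pi,\sigma)$ by the recipe of Section \ref{sec: A co H}, and then to check that, transported along $\theta$, it coincides with $(R^{\gamma_{R}},\xi_{\gamma_{R}})$. First, Lemma \ref{re: splitting} already gives that $(A^{\gamma},H,\pi,\sigma)$ is a splitting datum and that $\tau_{\gamma}=\tau$; since for $A=R\#_{\xi}H$ we computed in Section \ref{sec: correspondence} that $\tau=R\otimes\varepsilon_{H}$, the coalgebra of coinvariants of $A^{\gamma}$ in $_{H}^{H}\mathcal{YD}$ is again $R\otimes K$, identified with $R$ via $\theta$. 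Moreover the $H$-action $h\cdot r=\sigma(h_{(1)})r\sigma S(h_{(2)})$ is unchanged because $\sigma(h)\cdot_{\gamma}a=\sigma(h)a$ and $a\cdot_{\gamma}\sigma(h)=a\sigma(h)$ by Lemma \ref{re: splitting}, and the coaction and the comultiplication $\Delta_{R}=\tau(r_{(1)})\otimes r_{(2)}$ depend only on $\Delta_{A}$ and $\tau_{\gamma}=\tau$. Hence only the multiplication $m'$ and cocycle $\xi'$ can differ from those of $(R,\xi)$, and by the formulas of Section \ref{sec: A co H} these are
\[
m'(r\otimes s)=(R\otimes\varepsilon_{H})\bigl[(r\#1_{H})\cdot_{\gamma}(s\#1_{H})\bigr],\qquad \xi'(r\otimes s)=(\varepsilon_{R}\otimes H)\bigl[(r\#1_{H})\cdot_{\gamma}(s\#1_{H})\bigr].
\]

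Next I would expand $(r\#1_{H})\cdot_{\gamma}(s\#1_{H})=\gamma(x_{(1)}\otimes y_{(1)})\,m_{A}(x_{(2)}\otimes y_{(2)})\,\gamma^{-1}(x_{(3)}\otimes y_{(3)})$ for $x=r\#1_{H}$, $y=s\#1_{H}$, reading the iterated comultiplication off \eqref{form: Delta2Ar}. The scalar factors $\gamma$ and $\gamma^{-1}$ are $H$-bilinear and $H$-balanced by Lemma \ref{lem: balanced}, so by \eqref{form: utile gamma} of Lemma \ref{lem:RA} (together with $H$-bilinearity to discard the residual $H$-tensorands) every occurrence collapses to $\gamma_{R}$, resp.\ $\gamma_{R}^{-1}$, evaluated on elements of $R$; here $\gamma_{R}=\Omega(\gamma)$ and $\gamma_{R}^{-1}=\Omega(\gamma^{-1})$, the latter equality holding because $\Omega$ is convolution-preserving (Proposition preceding Theorem \ref{te:main1}). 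For the middle factor I would apply \eqref{form: varepsilonHmA} in the computation of $m'$ and \eqref{form: varepsilonRmA} in the computation of $\xi'$. Collecting the Yetter--Drinfeld coaction terms produced by $\Delta_{A}^{2}$ and comparing with $\Delta_{R\otimes R}^{2}$ and $\rho_{R\otimes R}$, I expect to arrive at exactly
\[
m'=(\gamma_{R}\otimes m_{R}\otimes\gamma_{R}^{-1})\Delta_{R\otimes R}^{2}=m_{R^{\gamma_{R}}},\qquad \xi'=u_{H}\gamma_{R}\ast\xi\ast\Psi(\gamma_{R}^{-1})=\xi_{\gamma_{R}}.
\]

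Finally, since $(R^{\gamma_{R}},\xi_{\gamma_{R}})=(R',\xi')$ is the pre-bialgebra with cocycle associated to the honest splitting datum $(A^{\gamma},H,\pi,\sigma)$, it is automatically a pre-bialgebra with cocycle in $_{H}^{H}\mathcal{YD}$, so none of the axioms of Definition \ref{def: Rcocycle} need be verified by hand. The map $\theta$, being an isomorphism of Yetter--Drinfeld coalgebras intertwining the multiplications and cocycles, upgrades to an isomorphism of pre-bialgebras with cocycle between $(R\otimes K,\xi_{\gamma_{R}}(\theta\otimes\theta))$ and $(R^{\gamma_{R}},\xi_{\gamma_{R}})$. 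The correspondence of Section \ref{sec: correspondence} applied to $(A^{\gamma},H,\pi,\sigma)$ then yields the bialgebra isomorphism $\omega\colon R^{\gamma_{R}}\#_{\xi_{\gamma_{R}}}H\to A^{\gamma}$, which is the asserted identity.

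The main obstacle will be the middle paragraph: the Sweedler bookkeeping is heavy because $\Delta_{A}^{2}(r\#1_{H})$ spreads the $H$-coaction of $R$ across all three legs, and one must track how these coaction tensorands thread through the three separate cocycle evaluations and through $m_{A}$ before recombining into the compact convolution expressions defining $m_{R^{\gamma_{R}}}$ and $\xi_{\gamma_{R}}$. The $H$-balanced reductions of Lemma \ref{lem:RA} are what keep this under control; the delicate point is to peel off the $H$-parts via $\varepsilon_{H}$ (for $m'$) and to collect them via the coaction/$\Psi$ (for $\xi'$) in precisely the order matching the definitions of $\Delta_{R\otimes R}^{2}$ and of $\Psi(\gamma_{R}^{-1})=(H\otimes\gamma_{R}^{-1})\rho_{R\otimes R}$.
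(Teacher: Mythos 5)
Your proposal is correct and follows essentially the same route as the paper's own proof: Lemma \ref{re: splitting} plus $\tau_\gamma=\tau=R\otimes\varepsilon_H$ to identify the coinvariants, then computing $m'$ and $\xi'$ by applying $(R\otimes\varepsilon_H)$ and $(\varepsilon_R\otimes H)$ to $(x\#1_H)\cdot_{A^\gamma}(y\#1_H)$ via \eqref{form: Delta2Ar}, \eqref{form: varepsilonHmA} and \eqref{form: varepsilonRmA}, and finally transporting along $\theta$ and invoking the correspondence $\omega$ of Section \ref{sec: correspondence}. The Sweedler computation you defer as the ``main obstacle'' is exactly the one the paper carries out, and it closes in the way you predict.
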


\begin{proof}
By Lemma \ref{lem: Psi}, $\Psi (\gamma _{R}^{-1})=\left( H\otimes \gamma
_{R}^{-1}\right) \rho _{R\otimes R} $ is convolution invertible with inverse
$\Psi (\gamma _{R})$. For $A:= R\#_{\xi }H $ with associated pre-bialgebra
with cocycle $(R \otimes K, \xi)$, let $\left( Q = R \otimes K,\zeta \right)
$ denote the pre-bialgebra with cocycle associated to $(A^{\gamma },H,\pi
,\sigma ) $. For $x\otimes 1,y \otimes 1 \in Q$, since $\tau = R \otimes
\varepsilon_H$ from Section \ref{sec: correspondence}, multiplication in $Q$
is given by
\begin{eqnarray*}
\lefteqn{ \left( x\otimes 1_{K}\right) \cdot _{Q}\left( y\otimes 1_{K}\right)%
} \\
&=&\tau \left[ \left( x\#1_{H}\right) \cdot _{A^{\gamma }}\left(
y\#1_{H}\right) \right] \\
& =&\left( R\otimes \varepsilon _{H}\right) \left[ \left( x\#1_{H}\right)
\cdot _{A^{\gamma }}\left( y\#1_{H}\right) \right] \\
&\overset{(\ref{form: varepsilonHmA})}{=}& \gamma _{R}\left( x^{(1)}\otimes
x_{\langle -1\rangle }^{(2)}x_{\langle -3\rangle }^{(3)}y^{(1)}\right)
(x_{\langle 0\rangle }^{(2)})\cdot _{R}(x_{\langle -1\rangle
}^{(3)}y^{(2)})\otimes \varepsilon _{H}(y_{\langle -1\rangle }^{(3)})\gamma
_{R}^{-1}\left( x_{\langle 0\rangle }^{(3)}\otimes y_{\langle 0\rangle
}^{(3)}\right) \\
&=&\gamma _{R}\left( x^{(1)}\otimes x_{\langle -1\rangle }^{(2)}x_{\langle
-2\rangle }^{(3)}y^{(1)}\right) x_{\langle 0\rangle }^{(2)}\cdot _{R}\left(
x_{\langle -1\rangle }^{(3)}y^{(2)}\right) \gamma _{R}^{-1}\left( x_{\langle
0\rangle }^{(3)}\otimes y^{(3)}\right) \otimes 1_{K} \\
&=&\left[ \left( \gamma _{R}\otimes m_{R}\otimes \gamma _{R}^{-1}\right)
\Delta _{R\otimes R}^{2}\left( x\otimes y\right) \right] \otimes 1_{K} \\
&=&m_{R^{\gamma _{R}}}\left( x\otimes y\right) \otimes 1_{K}.
\end{eqnarray*}

Furthermore, we have
\begin{eqnarray*}
\lefteqn{ \zeta \left( x\otimes 1_{K}\otimes y\otimes 1_{K}\right)} \\
&=&\pi \left[ \left( x\#1_{H}\right) \cdot _{A^{\gamma }}\left(
y\#1_{H}\right) \right] \\
&=&\left( \varepsilon _{R}\otimes H\right) \left[ \left( x\#1_{H}\right)
\cdot _{A^{\gamma }}\left( y\#1_{H}\right) \right] \\
&\overset{(\ref{form: varepsilonRmA})}{=}& \gamma _{R}\left( x^{(1)}\otimes
x_{\langle -1\rangle }^{(2)}x_{\langle -3\rangle }^{(3)}y^{(1)}\right) \xi
(x_{(0)}^{(2)}\otimes x_{\langle -2\rangle }^{(3)}y^{(2)})x_{\langle
-1\rangle }^{(3)}y_{\langle -1\rangle }^{(3)}\gamma _{R}^{-1}\left(
x_{\langle 0\rangle }^{(3)}\otimes y_{\langle 0\rangle }^{(3)}\right) \\
&=&\gamma _{R}\left( x^{(1)}\otimes x_{\langle -1\rangle }^{(2)}x_{\langle
-2\rangle }^{(3)}y^{(1)}\right) \xi \left( x_{\langle 0\rangle
}^{(2)}\otimes x_{\langle -1\rangle }^{(3)}y^{(2)}\right) \left( H\otimes
\gamma _{R}^{-1}\right) \rho _{R\otimes R}\left( x_{\langle 0\rangle
}^{(3)}\otimes y^{(3)}\right) \\
&=&\left[ u_{H}\gamma _{R}\ast \xi \ast \left( H\otimes \gamma
_{R}^{-1}\right) \rho _{R\otimes R}\right] \left( x\otimes y\right) \\
&=&\xi _{\gamma _{R}}\left( x\otimes y\right).
\end{eqnarray*}

Thus $\theta$ , the isomorphism of coalgebras in $_H^H\mathcal{YD}$ from
Section \ref{sec: correspondence}, induces the structure of a pre-bialgebra
with cocycle on the image $\theta(Q)$ and the image is exactly $R^{\gamma_R}$%
. Moreover $(R^{\gamma_R}, \xi_{\gamma_R})$ is indeed a pre-bialgebra with
cocycle as claimed, and $\zeta = \xi_{\gamma_R}$.

As in Section \ref{sec: A co H}, we can consider the bialgebra isomorphism
\begin{equation*}
\omega ^{-1}:A^{\gamma }\rightarrow Q\#_{\zeta }H\text{, }\omega
^{-1}(a)=\tau \left( a_{\left( 1\right) }\right) \otimes \pi \left(
a_{\left( 2\right) }\right) .
\end{equation*}
We have
\begin{eqnarray*}
\omega ^{-1}\left( r\otimes h\right) &=&\tau \left( r^{(1)}\#r_{\langle
-1\rangle }^{(2)}h_{(1)}\right) \otimes \pi \left( r_{\langle 0\rangle
}^{(2)}\#h_{\left( 2\right) }\right) \\
&=&\left( r^{(1)}\otimes \varepsilon _{H}\left( r_{\langle -1\rangle
}^{(2)}h_{(1)}\right) \right) \otimes \varepsilon _{R}\left( r_{\langle
0\rangle }^{(2)}\right) h_{\left( 2\right) } \\
&=&r\otimes 1_{K}\otimes h.
\end{eqnarray*}%
Since $\theta:\left( Q,\zeta \right) \rightarrow (R^{\gamma },\xi _{\gamma
_{R}})$ is an isomorphism of pre-bialgebras with cocycle (see Section \ref%
{sec: pre-bialg}) we get that
\begin{equation*}
\theta \otimes H:Q\#_{\zeta }H\rightarrow R^{\gamma _{R}}\#_{\xi _{\gamma
_{R}}}H
\end{equation*}%
is a bialgebra isomorphism. Therefore $\left( \theta \otimes H\right) \omega
^{-1}:A^{\gamma }\rightarrow R^{\gamma _{R}}\#_{\xi _{\gamma _{R}}}H$ is a
bialgebra isomorphism too. By the foregoing $\mathrm{Id}_{R\otimes H}=\left(
\theta \otimes H\right) \omega ^{-1}.$
\end{proof}

\begin{corollary}
Let $(A,H,\pi ,\sigma )$ be a splitting datum, let $(R,\xi )$ be the
associated pre-bialgebra with cocycle, and let $\gamma \in Z_{H}^{2}(A,K)$.
Then $(R^{\gamma _{R}},\xi _{\gamma _{R}})$ is exactly the pre-bialgebra
with cocycle associated to the splitting datum $(A^{\gamma },H,\pi ,\sigma )$%
.
\end{corollary}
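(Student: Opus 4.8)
The plan is to reduce the statement to Theorem~\ref{teo: smash gamma} by transporting the cocycle along the canonical isomorphism of Section~\ref{sec: correspondence}. Recall that the associated pre-bialgebra with cocycle $(R,\xi)$ of Section~\ref{sec: A co H} comes equipped with the bialgebra isomorphism $\omega:R\#_{\xi}H\rightarrow A$, $\omega(r\otimes h)=r\sigma(h)$. This $\omega$ is an isomorphism of splitting data: it commutes with the two embeddings of $H$ (since $\omega(1_{R}\otimes h)=\sigma(h)$) and with the two projections to $H$, and it restricts on the $\pi$-coinvariants to the inclusion $R\hookrightarrow A$, because $\omega(r\#1_{H})=r$ for $r\in R$.

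First I would set $\bar{\gamma}:=\gamma\circ(\omega\otimes\omega)$. Since $\omega$ is a bialgebra isomorphism intertwining the embeddings of $H$, the map $\bar{\gamma}$ lies in $Z^{2}_{H}(R\#_{\xi}H,K)$, and a direct check using that $\omega$ is simultaneously an algebra and coalgebra isomorphism shows that $\omega$ also defines a bialgebra isomorphism $\omega:(R\#_{\xi}H)^{\bar{\gamma}}\rightarrow A^{\gamma}$, again an isomorphism of splitting data for $(H,\pi,\sigma)$. Using $\omega(r\#1_{H})=r$ one verifies at once that $\bar{\gamma}_{R}=\gamma_{R}$ as maps on $R\otimes R$; consequently $R^{\bar{\gamma}_{R}}=R^{\gamma_{R}}$ and $\xi_{\bar{\gamma}_{R}}=\xi_{\gamma_{R}}$.

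Next I would apply Theorem~\ref{teo: smash gamma} to $(R,\xi)$ and $\bar{\gamma}$: it identifies the pre-bialgebra with cocycle associated to $((R\#_{\xi}H)^{\bar{\gamma}},H,\pi,\sigma)$ with $(R^{\bar{\gamma}_{R}},\xi_{\bar{\gamma}_{R}})=(R^{\gamma_{R}},\xi_{\gamma_{R}})$. Finally, the assignment of a pre-bialgebra with cocycle to a splitting datum in Section~\ref{sec: A co H} is built entirely from the data $A,\sigma,\pi$ --- the coinvariants, the map $\tau$, the multiplication $m(r\otimes s)=\tau(r\cdot_{A}s)$ and the cocycle $\xi(r\otimes s)=\pi(r\cdot_{A}s)$ --- so an isomorphism of splitting data carries the associated pre-bialgebra with cocycle to that of the isomorphic datum. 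Because $\omega$ restricts to the identification $r\#1_{H}\mapsto r$ on coinvariants, this transport is an equality and not merely an isomorphism, and applying it to $\omega:(R\#_{\xi}H)^{\bar{\gamma}}\rightarrow A^{\gamma}$ yields that the pre-bialgebra with cocycle associated to $(A^{\gamma},H,\pi,\sigma)$ is exactly $(R^{\gamma_{R}},\xi_{\gamma_{R}})$.

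The main obstacle I anticipate is justifying this last naturality step rather than any computation: one must verify that the restriction of $\omega$ to the $\pi$-coinvariants of $(R\#_{\xi}H)^{\bar{\gamma}}$ intertwines the multiplication $\tau(x\cdot_{\bar{\gamma}}y)$ and cocycle $\pi(x\cdot_{\bar{\gamma}}y)$ defining its associated pre-bialgebra with cocycle with the multiplication $\tau(x\cdot_{\gamma}y)$ and cocycle $\pi(x\cdot_{\gamma}y)$ defining that of $A^{\gamma}$. This follows from $\gamma=\bar{\gamma}\circ(\omega^{-1}\otimes\omega^{-1})$ together with the equalities $\tau_{\gamma}=\tau$ and $\tau_{\bar{\gamma}}=\tau_{R\#_{\xi}H}$ supplied by Lemma~\ref{re: splitting}, so no further hard computation is needed once Theorem~\ref{teo: smash gamma} is in hand.
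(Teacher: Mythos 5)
Your proof is correct, and its core is the same as the paper's: pull $\gamma$ back along the canonical isomorphism $\omega:R\#_{\xi}H\rightarrow A$ to the cocycle $\bar{\gamma}=\gamma\circ(\omega\otimes\omega)$ (the paper's $\alpha$), observe that $\bar{\gamma}_{R}=\gamma_{R}$ using $\omega(r\#1_{H})=r$, and apply Theorem \ref{teo: smash gamma}. The two arguments differ only in the concluding identification. Writing $(Q,\zeta)$ for the pre-bialgebra with cocycle associated to $(A^{\gamma},H,\pi,\sigma)$, the paper computes $r\cdot_{A^{\gamma}}\sigma(h)=r\cdot_{A}\sigma(h)$ to show that the canonical isomorphism $\omega':Q\#_{\zeta}H\rightarrow A^{\gamma}$ coincides with the twisted map $\omega^{\gamma}$, deduces that $\mathrm{Id}_{R\otimes H}:R^{\gamma_{R}}\#_{\xi_{\gamma_{R}}}H\rightarrow Q\#_{\zeta}H$ is a bialgebra isomorphism compatible with the canonical splittings, and then invokes \cite[Proposition 1.15]{AM-Small2} to conclude $(R^{\gamma_{R}},\xi_{\gamma_{R}})=(Q,\zeta)$. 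You instead remain at the level of coinvariants and prove the required naturality by hand: an isomorphism of splitting data intertwines $\tau$, the multiplication $\tau\circ m_{A}$ and the cocycle $\pi\circ m_{A}$, and since $\omega$ restricts to $\theta$ on coinvariants while $\tau_{\gamma}=\tau$ and $\tau_{\bar{\gamma}}=\tau_{R\#_{\xi}H}$ by Lemma \ref{re: splitting}, the transported structure is literally $(R^{\gamma_{R}},\xi_{\gamma_{R}})$. The two endings encode the same verification --- your functoriality check is in substance what the cited \cite[Proposition 1.15]{AM-Small2} provides --- so your version is more self-contained, at the price of spelling out a routine naturality argument that the paper outsources to a reference.
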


\begin{proof}
As in Section \ref{sec: A co H}, we can consider the bialgebra isomorphisms%
\begin{equation*}
\omega :R\#_{\xi }H\rightarrow A\text{, }\omega (r\otimes h)=r\cdot
_{A}\sigma (h).
\end{equation*}%
This gives a bialgebra isomorphism
\begin{equation*}
\omega ^{\gamma }:\left( R\#_{\xi }H\right) ^{\gamma \left( \omega \otimes
\omega \right) }\rightarrow A^{\gamma }\text{, }\omega ^{\gamma }(r\otimes
h)=\omega (r\otimes h).
\end{equation*}%
Let $\alpha :=\gamma \left( \omega \otimes \omega \right) .$ By Theorem \ref%
{teo: smash gamma}, we have
\begin{equation*}
\left( R\#_{\xi }H\right) ^{\alpha }=R^{\alpha _{R}}\#_{\xi _{\alpha _{R}}}H.
\end{equation*}%
We have
\begin{equation*}
\alpha _{R}\left( r\otimes s\right) =\alpha \left( r\otimes 1_{H}\otimes
s\otimes 1_{H}\right) =\gamma \left( \omega \otimes \omega \right) \left(
r\otimes 1_{H}\otimes s\otimes 1_{H}\right) =\gamma \left( r\otimes s\right)
=\gamma _{R}\left( r\otimes s\right)
\end{equation*}%
so that $\alpha _{R}=\gamma _{R}$ whence
\begin{equation*}
\left( R\#_{\xi }H\right) ^{\alpha }=R^{\gamma _{R}}\#_{\xi _{\gamma _{R}}}H.
\end{equation*}%
Denote by $\left( Q,\zeta \right) $ the pre-bialgebra associated to $%
(A^{\gamma },H,\pi ,\sigma ).$ We have%
\begin{equation*}
Q=\left( A^{\gamma }\right) ^{coH}=\left( A\right) ^{coH}=R
\end{equation*}%
so that $Q\#_{\zeta }H=R\otimes H$ as a vector space. The isomorphism
corresponding to the splitting datum $(A^{\gamma },H,\pi ,\sigma )$ is given
by%
\begin{equation*}
\omega ^{\prime }:Q\#_{\zeta }H\rightarrow A^{\gamma }\text{, }\omega
^{\prime }(r\otimes h)=r\cdot _{A^{\gamma }}\sigma (h).
\end{equation*}%
We have%
\begin{eqnarray*}
r\cdot _{A^{\gamma }}\sigma (h) &=&\gamma \left( r_{\left( 1\right) }\otimes
\sigma (h)_{\left( 1\right) }\right) r_{\left( 2\right) }\cdot _{A}\sigma
(h)_{\left( 2\right) }\gamma ^{-1}\left( r_{\left( 3\right) }\otimes \sigma
(h)_{\left( 3\right) }\right) \\
&=&\gamma \left( r_{\left( 1\right) }\otimes \sigma (h_{\left( 1\right)
})\right) r_{\left( 2\right) }\cdot _{A}\sigma (h_{\left( 2\right) })\gamma
^{-1}\left( r_{\left( 3\right) }\otimes \sigma (h_{\left( 3\right) })\right)
\\
&=&\gamma \left( r_{\left( 1\right) }\sigma (h_{\left( 1\right) })\otimes
1_{A}\right) r_{\left( 2\right) }\cdot _{A}\sigma (h_{\left( 2\right)
})\gamma ^{-1}\left( r_{\left( 3\right) }\sigma (h_{\left( 3\right)
})\otimes 1_{A}\right) \\
&=&r\cdot _{A}\sigma (h)
\end{eqnarray*}%
so that $\omega ^{\prime }=\omega ^{\gamma }.$ Hence we get that
\begin{equation*}
\mathrm{Id}_{R\otimes H}=\left( \omega ^{\prime }\right) ^{-1}\omega
^{\gamma }:R^{\gamma _{R}}\#_{\xi _{\gamma _{R}}}H\rightarrow Q\#_{\zeta }H
\end{equation*}%
is a bialgebra isomorphism. Now $( R^{\gamma _{R}}\#_{\xi _{\gamma
_{R}}}H,H,\pi ^{\prime },\sigma ^{\prime } ) $ and $\left( Q\#_{\zeta
}H,H,\pi ^{\prime },\sigma ^{\prime }\right) $ are both splitting data where
$\pi ^{\prime }:R\otimes H\rightarrow H,\pi ^{\prime }\left( r\otimes
h\right) =\varepsilon _{A}\left( r\right) h$ and $\sigma ^{\prime
}:H\rightarrow R\otimes H,\sigma ^{\prime }\left( h\right) =1_{R}\otimes h.$
Therefore, in view of \cite[Proposition 1.15]{AM-Small2}, one gets that $%
(R^{\gamma _{R}},\xi _{\gamma _{R}})=\left( Q,\zeta \right) $ as
pre-bialgebras with cocycle.
\end{proof}

We consider when the conditions which ensure associativity of $R$ also hold
for a cocycle twist $R^\upsilon$.

\begin{corollary}
Let $A = R \#_\xi H$, $\gamma \in Z^2_H(A,K)$, and $A^\gamma = R^{\upsilon}
\#_{\xi_\upsilon} H$ as in Theorem \ref{teo: smash gamma} where $\upsilon :=
\Omega(\gamma) = \gamma_R$ and $\xi_\upsilon := \xi_{ \gamma_R}$. By Theorem %
\ref{te: Rassoc}, we have

\textrm{{(a)} $\xi \left( z\right) t=\varepsilon \left( z\right) t,$ for
every $z\in R \otimes R,t\in R$ if and only if $\Phi \left( \xi \right) =%
\mathrm{Id} _{R^{\otimes 3}}$,\newline
and, }

\textrm{{(b)} $\xi_\upsilon \left( z\right) t=\varepsilon \left( z\right) t,
$ for every $z\in R \otimes R,t\in R$ if and only if $\Phi \left(
\xi_\upsilon \right) =\mathrm{Id} _{R^{\otimes 3}}$. }

\textrm{If $\Phi(\Psi(\upsilon)) = \Phi(u_H\upsilon)$ then {(a)} and {(b)}
are equivalent. Conversely, if {(a)} and {(b)} both hold, then $%
\Phi(\Psi(\upsilon)) = \Phi(u_H\upsilon)$. }
\end{corollary}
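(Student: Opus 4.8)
The plan is to factor $\Phi(\xi_\upsilon)$ and then read off both implications from that single identity. Recall from Theorem~\ref{teo: smash gamma}, with $\upsilon=\gamma_R$, that
\[ \xi_\upsilon = u_H\upsilon \ast \xi \ast \Psi(\upsilon^{-1}), \]
a convolution product in $\mathrm{Hom}(R\otimes R,H)$ computed with the coalgebra structure on $R\otimes R$ from Section~\ref{sec: ydcoalgebras}. First I would apply the map $\Phi$ of Lemma~\ref{lem: Phi}, taken with $C=R\otimes R$ and $M=R$ so that its image lies in $\mathrm{End}(R^{\otimes 3})$, to this equality.

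By Lemma~\ref{lem: Phi}, $\Phi$ is an algebra homomorphism from $(\mathrm{Hom}(R\otimes R,H),\ast)$ to $(\mathrm{End}(R^{\otimes 3}),\circ)$; hence $\Phi(\alpha\ast\beta)=\Phi(\alpha)\circ\Phi(\beta)$ and $\Phi$ carries convolution inverses to composition inverses. Since $\upsilon$ is convolution invertible (being a $2$-cocycle), so are $u_H\upsilon$, with inverse $u_H\upsilon^{-1}$, and, by Lemma~\ref{lem: Psi}, $\Psi(\upsilon)$; moreover that lemma shows $\Psi$ preserves convolution, so $\Psi(\upsilon^{-1})=\Psi(\upsilon)^{-1}$ and thus $\Phi(\Psi(\upsilon^{-1}))=\Phi(\Psi(\upsilon))^{-1}$. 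Combining these facts yields the key identity
\[ \Phi(\xi_\upsilon)=\Phi(u_H\upsilon)\circ\Phi(\xi)\circ\Phi(\Psi(\upsilon))^{-1}. \]

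For the first implication, assume $\Phi(\Psi(\upsilon))=\Phi(u_H\upsilon)$ and denote this common invertible endomorphism by $Q$; the identity then reads $\Phi(\xi_\upsilon)=Q\circ\Phi(\xi)\circ Q^{-1}$, a conjugation. As conjugation by an invertible map fixes $\mathrm{Id}_{R^{\otimes 3}}$, we get $\Phi(\xi)=\mathrm{Id}$ if and only if $\Phi(\xi_\upsilon)=\mathrm{Id}$, which by the two equivalences of Theorem~\ref{te: Rassoc} is exactly the equivalence of (a) and (b). Conversely, if (a) and (b) both hold, Theorem~\ref{te: Rassoc} gives $\Phi(\xi)=\mathrm{Id}=\Phi(\xi_\upsilon)$; substituting into the key identity leaves $\mathrm{Id}=\Phi(u_H\upsilon)\circ\Phi(\Psi(\upsilon))^{-1}$, i.e.\ $\Phi(\Psi(\upsilon))=\Phi(u_H\upsilon)$.

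The only substantive step is the factorization identity; once it is in hand, both directions are purely formal manipulations with invertible endomorphisms. The points requiring care are the equalities $\Psi(\upsilon^{-1})=\Psi(\upsilon)^{-1}$ and $\Phi(\alpha^{-1})=\Phi(\alpha)^{-1}$, which is precisely where the algebra-homomorphism properties of $\Psi$ (Lemma~\ref{lem: Psi}, applied to the comodule coalgebra $R\otimes R$) and of $\Phi$ (Lemma~\ref{lem: Phi}) are doing the work.
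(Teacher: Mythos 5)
Your proposal is correct and takes essentially the same route as the paper: both apply the algebra homomorphism $\Phi$ of Lemma \ref{lem: Phi} to the formula $\xi_\upsilon = u_H\upsilon \ast \xi \ast \Psi(\upsilon^{-1})$ from Theorem \ref{teo: smash gamma}, obtaining $\Phi(\xi_\upsilon)=\Phi(u_H\upsilon)\circ\Phi(\xi)\circ\Phi(\Psi(\upsilon))^{-1}$, and then read off the two implications from invertibility. Your write-up merely makes explicit the steps the paper leaves implicit, namely that $\Psi(\upsilon^{-1})=\Psi(\upsilon)^{-1}$ by Lemma \ref{lem: Psi} and that $\Phi$ sends convolution inverses to composition inverses, so the hypothesis $\Phi(\Psi(\upsilon))=\Phi(u_H\upsilon)$ turns the factorization into a conjugation.
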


\begin{proof}
Suppose first that $\Phi(\Psi(\upsilon)) = \Phi(u_H\upsilon)$. Since by
Theorem \ref{teo: smash gamma}, $\Phi(\xi_\upsilon) = \Phi(u_H \upsilon *
\xi * \Psi(\upsilon^{-1}))$, and by Lemma \ref{lem: Phi}, $\Phi$ is an
algebra map, then clearly $\Phi(\xi_\upsilon)$ is the identity if and only
if $\Phi(\xi )$ is. Conversely if $\Phi \left( \xi_\upsilon \right) =\mathrm{%
Id} _{R^{\otimes 3}} = \Phi \left( \xi \right) $, then by Theorem \ref{teo:
smash gamma}, $\Phi(u_H \upsilon * \Psi(\upsilon^{-1})) = \mathrm{Id}
_{R^{\otimes 3}}$.
\end{proof}

\section{Cocycle twists for Radford biproducts of quantum planes}

\label{sec: qlps}

\subsection{Construction of the cocycle}

Let $\Gamma$ be a finite abelian group, let $H = K[\Gamma]$ and let $V =
Kx_1 \oplus Kx_2 \in {_\Gamma^\Gamma \mathcal{YD}}$ be a quantum plane with $%
x_i \in V^{\chi_i}_{g_i}$ as in Definition \ref{de: qls}. Let $A $ be the
Radford biproduct $\mathcal{B}(V) \# H$. Suppose as well that $g_1g_2 \neq 1$%
, $\chi_1\chi_2 = \varepsilon$, $g_i^r \neq 1$ and $\chi_i^r = \varepsilon$
so that it makes sense for the scalars $a_i$ and $a$ to be nonzero.

\vspace{1mm}

Let $\chi := \chi_1 = \chi_2^{-1}$. Suppose that $\chi(g_1)=q$ is a
primitive $r$th root of unity. Then $\chi_1(g_1) = \chi_1(g_2) = q $ and $%
\chi_2(g_1) = \chi_2(g_2) = q^{-1}$.

\vspace{1mm}

Although it is known that liftings of the coradically graded Hopf algebra $A$
are isomorphic to cocycle twists of $A$, the explicit description of the
cocycle in the most general setting has not been given. In this section we
will describe this cocycle.

\begin{lemma}
\label{le: qlpfirstlemma} Let $\gamma \in \mathrm{{Hom}(A \otimes A , K)}$
be $H$-bilinear and $H$-balanced. If $q^{i+k} \neq q^{j+l}$ then $%
\gamma(x_1^ix_2^j \otimes x_1^kx_2^l) =0 $.
\end{lemma}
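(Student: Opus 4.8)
The plan is to exploit the fact that the grouplikes act diagonally on the PBW monomials, together with the three hypotheses on $\gamma$: left $H$-linearity and right $H$-linearity (the two halves of $H$-bilinearity, which for a $K$-valued form read $\gamma(\sigma(h)a\otimes a')=\varepsilon(h)\gamma(a\otimes a')$ and $\gamma(a\otimes a'\sigma(h))=\varepsilon(h)\gamma(a\otimes a')$, as in the proof of Lemma \ref{lem: balanced}), and $H$-balancedness, which transports a grouplike across the tensor sign. First I would record the commutation rule in $A$: from $hx_i=\chi_i(h)x_ih$ (Proposition \ref{pr: lift}) and the identifications $\chi=\chi_1=\chi_2^{-1}$, one obtains $\sigma(g)\,x_1^ix_2^j=\chi(g)^{\,i-j}\,x_1^ix_2^j\,\sigma(g)$ for every $g\in\Gamma$, so that conjugation by $g$ scales $x_1^ix_2^j$ by $\chi(g)^{i-j}$.

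Writing $\gamma_0:=\gamma(x_1^ix_2^j\otimes x_1^kx_2^l)$ and suppressing $\sigma$, the core computation inserts a grouplike, pushes it through both tensorands, and reabsorbs it. For any $g\in\Gamma$ it would run:
\begin{align*}
\gamma_0
&=\gamma\!\left(g\,x_1^ix_2^j\otimes x_1^kx_2^l\right)
  &&\text{(left $H$-linearity, }\varepsilon(g)=1\text{)}\\
&=\chi(g)^{i-j}\,\gamma\!\left(x_1^ix_2^j\,g\otimes x_1^kx_2^l\right)
  &&\text{(commutation rule)}\\
&=\chi(g)^{i-j}\,\gamma\!\left(x_1^ix_2^j\otimes g\,x_1^kx_2^l\right)
  &&\text{($H$-balanced)}\\
&=\chi(g)^{(i-j)+(k-l)}\,\gamma\!\left(x_1^ix_2^j\otimes x_1^kx_2^l\,g\right)
  &&\text{(commutation rule)}\\
&=\chi(g)^{(i-j)+(k-l)}\,\gamma_0
  &&\text{(right $H$-linearity).}
\end{align*}
Hence $\bigl(\chi(g)^{(i+k)-(j+l)}-1\bigr)\gamma_0=0$ for every $g\in\Gamma$.

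Finally I would specialize to $g=g_1$, where $\chi(g_1)=q$, to get $\bigl(q^{(i+k)-(j+l)}-1\bigr)\gamma_0=0$. Since $q^{(i+k)-(j+l)}=1$ exactly when $q^{i+k}=q^{j+l}$, the hypothesis $q^{i+k}\neq q^{j+l}$ makes the scalar factor nonzero, forcing $\gamma_0=0$, which is the claim.

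There is no serious obstacle here; the entire content is the bookkeeping in the displayed chain. The only two points needing care are (i) using the correct reading of $H$-bilinearity for a $K$-valued bilinear form, so that absorbing a grouplike at the outer end of each tensorand produces $\varepsilon(g)=1$ rather than a character value, and (ii) verifying the commutation scalar $\chi(g)^{i-j}$ from the conventions $\chi_1=\chi$, $\chi_2=\chi^{-1}$ and $\chi(g_1)=q$; both are immediate from the data recorded just before the lemma.
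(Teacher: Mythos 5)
Your proof is correct and uses exactly the same mechanism as the paper's: transport a grouplike across the tensor sign via $H$-balancedness, pick up the commutation scalars $\chi(g)^{i-j}$ and $\chi(g)^{k-l}$, and absorb the grouplike at the outer ends using $H$-bilinearity, forcing $\bigl(q^{(i+k)-(j+l)}-1\bigr)\gamma_0=0$. The paper runs the identical computation contrapositively with $g=g_1$ directly (assuming $\gamma_0\neq 0$ and concluding $q^{j-i}=q^{k-l}$), so the two arguments differ only in bookkeeping, not in substance.
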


\begin{proof}
Suppose $\gamma(x_1^ix_2^j \otimes x_1^kx_2^l)\neq 0$. Since $\gamma$ is $H $%
-balanced, $\gamma(x_1^ix_2^jg_1 \otimes x_1^kx_2^l) = \gamma(x_1^ix_2^j
\otimes g_1x_1^kx_2^l)$ so that from the $H$-bilinearity of $\gamma$ then $%
\chi^{j-i}(g_1) = \chi^{k-l}(g_1)$. Thus $q^{j-i} = q^{k-l}$ and the result
follows.
\end{proof}

\begin{corollary}
\label{co: qlpcocycle} Let $\gamma \in Z^2_H(A ,K)$. If $q^{i+k} \neq
q^{j+l} $, then $\gamma^{\pm1}(x_1^ix_2^j \otimes x_1^kx_2^l)=0.$
\end{corollary}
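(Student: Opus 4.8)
The plan is to reduce the corollary immediately to Lemma~\ref{le: qlpfirstlemma}, whose hypotheses are precisely that the map under consideration be $H$-bilinear and $H$-balanced. First I would observe that by definition of $Z_H^2(A,K)$ the cocycle $\gamma$ is $H$-bilinear, and that by Lemma~\ref{lem: balanced}(i) it is also $H$-balanced. Hence Lemma~\ref{le: qlpfirstlemma} applies verbatim to $\gamma$ itself, which settles the case of the exponent $+1$: whenever $q^{i+k}\neq q^{j+l}$ we obtain $\gamma(x_1^ix_2^j\otimes x_1^kx_2^l)=0$.

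For the exponent $-1$, the key input is Lemma~\ref{lem: balanced}(ii), which guarantees that the convolution inverse $\gamma^{-1}$ is again $H$-bilinear and $H$-balanced. Thus $\gamma^{-1}$ also satisfies the hypotheses of Lemma~\ref{le: qlpfirstlemma}, and the same vanishing conclusion holds for $\gamma^{-1}(x_1^ix_2^j\otimes x_1^kx_2^l)$ under the identical numerical condition $q^{i+k}\neq q^{j+l}$. Combining the two cases yields the stated result for $\gamma^{\pm 1}$.

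There is essentially no obstacle here: the corollary is a direct consequence of the two preceding lemmas once one notes that Lemma~\ref{lem: balanced} upgrades the $H$-bilinearity of $\gamma$ to the full pair of properties ($H$-bilinear and $H$-balanced) required by Lemma~\ref{le: qlpfirstlemma}, and that it transfers both properties to $\gamma^{-1}$. The only point worth checking is that the numerical hypothesis is genuinely symmetric when passing from $\gamma$ to $\gamma^{-1}$: Lemma~\ref{le: qlpfirstlemma} is stated for an arbitrary $H$-bilinear, $H$-balanced map, and its condition $q^{i+k}\neq q^{j+l}$ (equivalently $q^{\,j-i}\neq q^{\,k-l}$, since $q$ is a primitive $r$th root of unity) does not depend on which of the two maps we feed in, so no separate re-derivation is needed.
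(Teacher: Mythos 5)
Your proposal is correct and follows exactly the paper's own argument: invoke Lemma~\ref{lem: balanced} to get that $\gamma$ is $H$-balanced and that $\gamma^{-1}$ is $H$-bilinear and $H$-balanced, then apply Lemma~\ref{le: qlpfirstlemma} to both maps. Nothing is missing, and no further comment is needed.
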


\begin{proof}
By Lemma \ref{lem: balanced}, $\gamma$ is $H$-balanced, and $\gamma^{-1}$ is
$H$-bilinear and $H$-balanced also.
\end{proof}

\vspace{1mm} The next propositions will require the $q$-analogue of the
Chu-Vandermonde formula \cite[Proposition IV.2.3]{Kassel}
\begin{equation}  \label{eq: kassel}
\sum_{k=0}^r \binom{a}{k}_q \binom{b}{r-k}_q q^{(a-k)(r-k)} = \binom{a+b}{r}%
_q,
\end{equation}
as well as the fact that when $q$ is a primitive $r^{th}$ root of unity and $%
k \leq r$
\begin{equation}  \label{eq: gm equality}
\binom{r+k}{k}_q = 1.
\end{equation}

Also we will need the fact, which follows directly from the $q$-binomial
theorem, that
\begin{equation}  \label{eq: qminus1}
\binom{n}{k}_{q^{-1}}q^{k(n-k)} = \binom{n}{k}_q.
\end{equation}
If $n,i$ or $n-i$ is negative, we set $\binom{n}{i}_q =0$.

\vspace{1mm}

\begin{proposition}
\label{pr: gamma(ai)} Let $A = R \# H$ as above with $R = \mathcal{B}(V)$, $%
H = K[\Gamma]$. Define $H$-bilinear maps $\gamma_i$, $i=1,2$, from $A
\otimes A$ to $K$ as follows: $\gamma_i= \varepsilon$ on all $x_1^lx_2^m
\otimes x_1^n x_2^t$ except that
\begin{equation}  \label{eq: gammai}
\gamma_i(x_i^m \otimes x_i^{r-m}) = a_i,
\end{equation}
and $\gamma_i$ is then extended to $A \otimes A$ by $H$-bilinearity.

The maps $\gamma_1, \gamma_2$ lie in $Z^2_H(A,K)$ and furthermore these
cocycles commute.
\end{proposition}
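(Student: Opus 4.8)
The plan is to show each $\gamma_i$ is an $H$-bilinear unital $2$-cocycle and then that $\gamma_1\ast\gamma_2=\gamma_2\ast\gamma_1$. By construction $\gamma_i$ is $H$-bilinear, so by Lemma \ref{lem: balanced} it remains only to check convolution invertibility, the normalization \eqref{form: cocycle3}, and the cocycle identity \eqref{form: cocycle1 Hopf}. Convolution invertibility is immediate: $\gamma_i$ agrees with $\varepsilon_{A\otimes A}$ on the coradical $(A\otimes A)_0=K[\Gamma]\otimes K[\Gamma]$, since the value on any $g\otimes g'$ is forced by $H$-bilinearity to be $\gamma_i(1\otimes 1)=1$; hence $\gamma_i$ is invertible by the standard invertibility criterion for maps agreeing with $\varepsilon$ on the coradical (cf. Remark \ref{rem: v e v^-1 lin}(i)). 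The normalization \eqref{form: cocycle3} holds because the exceptional values occur only for $1\le m\le r-1$, so $\gamma_i(1\otimes -)=\gamma_i(-\otimes 1)=\varepsilon$.

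The key simplification is to factor $\gamma_i$ through a rank-one quotient. Let $p_i:A\to B_i:=\mathcal{B}(Kx_i)\#K[\Gamma]$ be the Hopf algebra surjection fixing $\Gamma$ and $x_i$ and sending $x_j\mapsto 0$ for $j\neq i$; this is well defined because the defining relations of $A$ (in particular $x_ix_j=\chi_j(g_i)x_jx_i$ and $x_j^r=0$) are preserved, and it is a coalgebra map since $x_j$ is $(1,g_j)$-primitive. A direct check on the PBW basis shows $\gamma_i=\bar\gamma_i\circ(p_i\otimes p_i)$, where $\bar\gamma_i\in\mathrm{Hom}(B_i\otimes B_i,K)$ is the $H$-bilinear map equal to $\varepsilon$ except that $\bar\gamma_i(x_i^m\otimes x_i^{r-m})=a_i$. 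Because $p_i$ is a bialgebra map fixing $H$, pulling a cocycle back along $p_i\otimes p_i$ preserves \eqref{form: cocycle1 Hopf}, \eqref{form: cocycle3}, and $H$-bilinearity, and sends inverses to inverses (so that $\gamma_i^{-1}=\bar\gamma_i^{-1}\circ(p_i\otimes p_i)$). Thus it suffices to prove $\bar\gamma_i\in Z^2_H(B_i,K)$, i.e. the statement in the rank-one case.

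In $B_i$ every element is a group element times a power of $x_i$, so by $H$-bilinearity it is enough to test \eqref{form: cocycle1 Hopf} on triples $x_i^a\otimes x_i^b\otimes x_i^c$. Expanding the comultiplications by the quantum binomial theorem and using that $\bar\gamma_i$ is supported on pairs $x_i^m\otimes x_i^{r-m}$, both sides become sums of $q$-binomial coefficients constrained by the requirement that the relevant exponents add to $r$; these collapse by the $q$-Chu--Vandermonde identity \eqref{eq: kassel}, while the terms in which an intermediate power reaches or exceeds $r$ (where $x_i^r=0$ in $B_i$ but the cocycle contributes $a_i$) are controlled by \eqref{eq: gm equality}, with \eqref{eq: qminus1} used to reconcile the $q$ and $q^{-1}$ conventions arising from $\chi_i(g_i)=q^{\pm 1}$. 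Alternatively, by Remark \ref{re: cocycle iff assoc} it is equivalent to check that $(B_i,\ast_{\bar\gamma_i})$ is associative and unital, which follows once one identifies it with the genuine (hence associative) rank-one lifting in which $x_i^r=a_i(1-g_i^r)$.

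Finally, for commutation write $\gamma_i=\varepsilon_{A\otimes A}+\delta_i$, where $\delta_i$ is supported on $x_i^m\otimes x_i^{r-m}$; since $\varepsilon_{A\otimes A}$ is the convolution unit, $\gamma_1\ast\gamma_2=\varepsilon_{A\otimes A}+\delta_1+\delta_2+\delta_1\ast\delta_2$ and likewise in the opposite order, so the claim reduces to $\delta_1\ast\delta_2=\delta_2\ast\delta_1$. Here I would use the $\mathbb{Z}^2$-grading of $A$ in which $x_1,x_2$ have degrees $(1,0),(0,1)$: $\delta_1$ is nonzero only on the $x_1$-axis and $\delta_2$ only on the $x_2$-axis, so evaluating either convolution on a basis element forces, because $\Delta$ is an algebra map, a unique bihomogeneous splitting of each argument. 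Both orders then produce $a_1a_2$ times a scalar coming only from moving grouplikes past monomials, and these scalars agree by the bicharacter relation $\chi_i(g_j)\chi_j(g_i)=1$ of Definition \ref{de: qls}. The main obstacle is the rank-one cocycle identity of the preceding paragraph: tracking the wrap-around contributions at $x_i^r$ is exactly where \eqref{eq: kassel} and \eqref{eq: gm equality} are needed, and keeping the normalizations of the $q$-binomials consistent (via \eqref{eq: qminus1}) is the most error-prone point.
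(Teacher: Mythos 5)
Your main line of argument is correct and is, at its core, the paper's own proof in different packaging: normalization is immediate from the support of $\gamma_i$, the $2$-cocycle identity \eqref{form: cocycle1 Hopf} is verified on one-variable monomial triples by collapsing $q$-binomial sums via \eqref{eq: kassel}, \eqref{eq: gm equality} and \eqref{eq: qminus1}, and commutation is checked by evaluating both convolutions on PBW basis pairs. What you do differently, and legitimately, is organizational: you settle convolution invertibility by the coradical criterion (the paper instead exhibits $\gamma_i^{-1}$ explicitly, after the proof), and you reduce to the quantum line by factoring $\gamma_i=\bar\gamma_i\circ(p_i\otimes p_i)$ through the Hopf algebra quotient $B_i=A/(x_j)$, $j\neq i$. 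This reduction is sound ($x_j$ is skew-primitive, so $(x_j)$ is a Hopf ideal; the factorization holds on the PBW basis; pullback along a bialgebra surjection preserves \eqref{form: cocycle1 Hopf}, \eqref{form: cocycle3}, $H$-bilinearity and inverses) and it cleanly replaces the paper's argument, via Lemma \ref{le: qlpfirstlemma} and the support of $\gamma_1$, that mixed monomials contribute nothing. Note, though, that for the rank-one identity itself you only sketch the computation that the paper actually carries out.

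Two specific steps are flawed as written. First, your \emph{alternative} route through Remark \ref{re: cocycle iff assoc} is wrong: the one-sided twist $(B_i,\ast_{\bar\gamma_i})$ is not the lifting with $x_i^r=a_i(1-g_i^r)$. Indeed $x_i\ast_{\bar\gamma_i}x_i^{r-1}=x_i^r+\bar\gamma_i(x_i\otimes x_i^{r-1})1=a_i1_{B_i}$, so in the one-sided twist $x_i^{\ast r}=a_i1$; it is the \emph{two-sided} twist $B_i^{\bar\gamma_i}$ that realizes the lifting (this is Proposition \ref{pr: cocycle}, proved later and using the present proposition). Moreover, any honest identification of $(B_i,\ast_{\bar\gamma_i})$ with a known associative algebra requires computing all products $x_i^k\ast_{\bar\gamma_i}x_i^l$, i.e.\ exactly the $q$-binomial computation being avoided, so this shortcut is circular; discard it and rely on the direct computation. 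Second, in the commutation step the scalars do not agree \emph{by the bicharacter relation alone}. On $x_1^ix_2^j\otimes x_1^kx_2^l$ the two convolutions produce (in the paper's normalization) $q^{-ij}\delta_{i+k,r}\delta_{j+l,r}a_1a_2$ and $q^{-kl}\delta_{i+k,r}\delta_{j+l,r}a_1a_2$; the relation $\chi_1(g_2)\chi_2(g_1)=1$ only serves to express both scalars as powers of the single root $q$, and the actual equality $q^{-ij}=q^{-(r-k)(r-l)}=q^{-kl}$ requires the support constraints $i+k=j+l=r$ together with $q^r=1$ (i.e.\ $\chi^r=\varepsilon$). The conclusion is true and the needed facts are available in this setting, but the justification you give would not stand on its own.
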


\begin{proof}
We show first that $\gamma_1 \in Z^2_H(A,K)$. Note that by the definition, $%
\gamma_i$ is $H$-balanced. By the definition of $\gamma_i$, condition $(\ref%
{form: cocycle3})$ holds and we check condition (\ref{form: cocycle1 Hopf})
for the triple $x_1^ix_2^j, x_1^kx_2^l, x_1^tx_2^s $. It is clear from the
definition of $\gamma_1 $ that both sides of (\ref{form: cocycle1 Hopf}) are
$0$ unless $j=l=s=0$ so that the triple to check is $x_1^i, x_1^k, x_1^t $.
By Lemma \ref{le: qlpfirstlemma}, both sides are $0$ unless $i+k+t = r$ or $%
i+k+t = 2r$. Suppose first that $i+k+t =r$. Then the left hand side equals $%
\gamma_1(g_1^k \otimes g_1^t)\gamma_1(x_1^i \otimes x_1^kx_1^t) = a_1$ and
similarly the right hand side is $\gamma_1(x_1^ix_1^k \otimes x_1^t) = a_1$.

Now let $i+k+t = 2r$. Then the left hand side of (\ref{form: cocycle1 Hopf})
is
\begin{eqnarray*}
&& \sum_{m \geq 0} \binom{k}{m}_{q^{-1}}\binom{t}{r-m}_{q} \gamma_1 (x_1^m
\otimes x_1^{r-m})\gamma_1(x_1^i \otimes x_1^{k-m}x_1^{t-(r-m)}) \\
&& \overset{(\ref{eq: qminus1})}{=} (\sum_{m \geq 0} q^{-m(k-m)} \binom{k}{m}%
_q \binom{t}{r-m}_{q} ) a_1^2 \\
&& \overset{(\ref{eq: kassel})}{=} \binom{k+t}{r}_q a_1^2 \overset{(\ref{eq:
gm equality})}{=} a_1^2.
\end{eqnarray*}

Similarly the right hand side of (\ref{form: cocycle1 Hopf}) is
\begin{equation*}
\sum_{n \geq 0} \binom{i}{n}_{q^{-1}}\binom{k}{r-n}_{q} \gamma_1 (x_1^n
\otimes x_1^{r-n})\gamma_1(x_1^{i-n}x_1^{k-(r-n)} \otimes x_1^{t }) = \binom{%
i+k}{r}_q a_1^2 = a_1^2.
\end{equation*}

The proof that $\gamma_2$ is a cocycle is analogous.

We show next that $\gamma_1 $ and $\gamma_2$ commute by applying $\gamma_1
\ast \gamma_2$ and $\gamma_2 \ast \gamma_1$ to $x_1^ix_2^j \otimes
x_1^kx_2^l $. 
Then
\begin{equation*}
\gamma_1 \ast \gamma_2(x_1^ix_2^j \otimes x_1^kx_2^l) = \gamma_1(x_1^ig_2^j
\otimes x_1^k)\gamma_2(x_2^j \otimes x_2^l) =
q^{-ij}\delta_{i+k,r}\delta_{j+l,r}a_1a_2,
\end{equation*}
while
\begin{equation*}
\gamma_2 \ast \gamma_1(x_1^ix_2^j \otimes x_1^kx_2^l)= \gamma_2( x_2^j
\otimes g_1^kx_2^l)\gamma_1(x_1^i \otimes x_1^k) =
q^{-kl}\delta_{i+k,r}\delta_{j+l,r}a_1a_2.
\end{equation*}
Since for $i+k = j+l =r$ then $q^{-ij} = q^{-(r-k)(r-l)} = q^{-kl}$, these
expressions are equal.
\end{proof}

Note that it is straightforward to check that $\gamma_i^{-1}$ is the $H$%
-bilinear map defined exactly as $\gamma_i$ is but with $a_i$ replaced by $%
-a_i$. Also note that the multiplication $m_i: A^{\gamma_i} \otimes
A^{\gamma_i} \rightarrow A^{\gamma_i}$ is the same as the multiplication on $%
A$ except that for $0 < m <r$,
\begin{equation*}
m_i(x_i^m \otimes x_i^{r-m}) = \gamma_i(x_i^m \otimes x_i^{r-m}) + x_i^r +
g_i^r \gamma_i^{-1}(x_i^m \otimes x_i^{r-m}) = a_i(1-g_i^r).
\end{equation*}

\begin{corollary}
\label{co: gammai cocycle}For $i \neq j$, $i,j = 1,2$, $\gamma_i \in
Z^2_H(A^{\gamma_j},K)$.
\end{corollary}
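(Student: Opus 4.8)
The plan is to reduce the statement to the single claim that $\gamma_i\ast\gamma_j$ is a $2$-cocycle for $A$ itself, and then to establish that claim by a computation on the PBW basis. First, the conditions not involving the multiplication come for free. Since $A^{\gamma_j}=A$ as a coalgebra and, by Lemma~\ref{re: splitting}, $\sigma(h)\cdot_{\gamma_j}a=\sigma(h)a$ and $a\cdot_{\gamma_j}\sigma(h)=a\sigma(h)$, the map $\gamma_i$ is still $H$-bilinear and $H$-balanced on $A^{\gamma_j}$ (using also Lemma~\ref{lem: balanced}), it is still normalized in the sense of \eqref{form: cocycle3}, and $\gamma_i^{-1}$ remains its convolution inverse there. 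Hence only the cocycle identity \eqref{form: cocycle1 Hopf} relative to $m_{A^{\gamma_j}}$ must be checked.

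For this I would pass to the associativity reformulation of Remark~\ref{re: cocycle iff assoc}, applied to the bialgebra $A^{\gamma_j}$: the map $\gamma_i$ satisfies \eqref{form: cocycle1 Hopf} and \eqref{form: cocycle3} for $A^{\gamma_j}$ if and only if ${}_{\gamma_i}(A^{\gamma_j})$ is associative and unital. Expanding the twisted product and using the commutativity $\gamma_i\ast\gamma_j=\gamma_j\ast\gamma_i$ of Proposition~\ref{pr: gamma(ai)}, one identifies ${}_{\gamma_i}(A^{\gamma_j})$ with the two-sided twist ${}_{\gamma_i\ast\gamma_j}A_{\gamma_j^{-1}}$. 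With the maps $X,Y$ of the proof of Lemma~\ref{lm: building cocycles} (here with $\beta=\gamma_i\ast\gamma_j$ and $\gamma=\gamma_j^{-1}$), associativity of ${}_{\gamma_i\ast\gamma_j}A_{\gamma_j^{-1}}$ holds once both $[X(\gamma_i\ast\gamma_j)]^{-1}\ast Y(\gamma_i\ast\gamma_j)$ and $[X(\gamma_j)]^{-1}\ast Y(\gamma_j)$ equal $\varepsilon$. The second is $\varepsilon$ because $\gamma_j\in Z^2_H(A,K)$, and the first is exactly the cocycle identity for $\gamma_i\ast\gamma_j$. Thus the corollary reduces to $\gamma_i\ast\gamma_j\in Z^2(A,K)$. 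Note that $\gamma_i\ast\gamma_j\in\mathcal{BB}(A)$ automatically, since $\mathcal{BB}(A)$ is closed under convolution, and it is normalized and convolution invertible, so again only \eqref{form: cocycle1 Hopf} is at issue.

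It remains to prove $\gamma_i\ast\gamma_j\in Z^2(A,K)$, and this is where the real work lies. I would verify \eqref{form: cocycle1 Hopf} directly on triples of PBW monomials $x_1^ax_2^b,\ x_1^cx_2^d,\ x_1^ex_2^f$. The key structural input is Corollary~\ref{co: qlpcocycle}: $\gamma_i^{\pm1}$ vanishes off pure $x_i$-arguments and $\gamma_j^{\pm1}$ off pure $x_j$-arguments, so in the convolution $\gamma_i\ast\gamma_j$ the two variables decouple. Applying the comultiplication formulas to the monomials and separating the $x_1$- and $x_2$-contributions, each side of \eqref{form: cocycle1 Hopf} should factor into an $x_i$-part and an $x_j$-part, which are precisely the single-variable expressions already shown to balance in Proposition~\ref{pr: gamma(ai)} by means of the $q$-Chu--Vandermonde identity \eqref{eq: kassel} and the relations \eqref{eq: gm equality} and \eqref{eq: qminus1}.

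The main obstacle is exactly this decoupling step. Because the coproduct of a mixed monomial $x_1^ax_2^b$ interleaves its $x_1$- and $x_2$-Sweedler components (with $q$-binomial coefficients and intervening grouplikes), the identity $X(\gamma_i\ast\gamma_j)=Y(\gamma_i\ast\gamma_j)$ does \emph{not} follow formally from $X(\gamma_i)=Y(\gamma_i)$ and $X(\gamma_j)=Y(\gamma_j)$; it is the disjointness of the two supports together with commutativity that forces the cancellations, the cross factor $q^{-ab}$ occurring in $\gamma_i\ast\gamma_j$ being what makes the two orderings agree, just as in the commutativity computation of Proposition~\ref{pr: gamma(ai)}. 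Carrying the $q$-binomial coefficients correctly through this separation is the only genuinely computational part of the argument.
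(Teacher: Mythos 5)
Your reduction is logically sound, but it inverts the paper's order of deduction and ends up deferring all of the content to a claim you never prove. Your first two paragraphs are fine: $H$-bilinearity, balancedness, normalization and invertibility transfer to $A^{\gamma_j}$ because $\sigma(h)\cdot_{\gamma_j}a=\sigma(h)a$ and $a\cdot_{\gamma_j}\sigma(h)=a\sigma(h)$ (Lemma \ref{re: splitting}), and by Remark \ref{re: cocycle iff assoc}, the identification ${}_{\gamma_i}(A^{\gamma_j})={}_{\gamma_i\ast\gamma_j}A_{\gamma_j^{-1}}$ and the $X,Y$ formulas in the proof of Lemma \ref{lm: building cocycles}, the corollary would indeed follow once $\gamma_i\ast\gamma_j$ satisfies \eqref{form: cocycle1 Hopf} for $A$; this is Corollary \ref{co: beattie} run in reverse. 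The trouble is that the statement $\gamma_i\ast\gamma_j\in Z^2_H(A,K)$ is Corollary \ref{co: building cocycles} of the paper, and there it is deduced \emph{from} the corollary you are proving (together with Proposition \ref{pr: gamma(ai)} and Corollary \ref{co: beattie}). So your route obliges you to supply an independent, direct proof of that convolution statement --- and that is exactly what your last two paragraphs do not contain.

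The verification of \eqref{form: cocycle1 Hopf} for $\gamma_i\ast\gamma_j$ is only asserted: you say each side ``should factor'' into an $x_1$-part and an $x_2$-part handled by Proposition \ref{pr: gamma(ai)}, and you then concede yourself that this decoupling ``does not follow formally'' from the cocycle identities for $\gamma_1$ and $\gamma_2$ and is ``the main obstacle.'' That concession is the gap: the cross terms produced by the interleaved coproducts of mixed monomials --- the $q$-binomial coefficients, the reordering $x_2x_1=qx_1x_2$ inside the middle tensorand, the factor $q^{-ab}$ in $(\gamma_1\ast\gamma_2)(x_1^ax_2^b\otimes x_1^{r-a}x_2^{r-b})$ --- are precisely what must be shown to cancel, and no argument is given. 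Note that the paper's own proof avoids this two-variable computation entirely: it checks \eqref{form: cocycle1 Hopf} for $\gamma_2$ against the multiplication of $A^{\gamma_1}$ directly on a triple $x_1^ix_2^j,\ x_1^kx_2^l,\ x_1^tx_2^s$; since $\gamma_2^{\pm 1}$ is supported on pure powers of $x_2$ (Corollary \ref{co: qlpcocycle}), both sides vanish unless $i=k=t=0$, and the only new feature of the twisted product, namely $x_1^{k+t}=a_1(1-g_1^r)$ when $k+t=r$, contributes zero because $g_1^r$ commutes with $x_2$ and $\gamma_2$ is $H$-bilinear; everything then collapses to the one-variable computation of Proposition \ref{pr: gamma(ai)}. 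Until you actually carry out (or otherwise justify) the cocycle identity for $\gamma_i\ast\gamma_j$ on $A$, the proposal is a reduction, not a proof; and the reduction is to a statement harder than the one the paper checks.
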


\begin{proof}
Basically the same proof as that of Proposition \ref{pr: gamma(ai)} shows
that $\gamma_i \in Z^2_H(A^{\gamma_j} ,K)$, $i \neq j$. For example, to show
that $\gamma_2 \in Z^2_H(A^{\gamma_1},K)$, we test the triple $x_1^ix_2^j,
x_1^kx_2^l, x_1^tx_2^s $ and find that the left hand side of (\ref{form:
cocycle1 Hopf}) is
\begin{eqnarray*}
&& \sum_m \binom{l}{m}_{q} \binom{s}{r-m}_{q^{-1}} \gamma_2(g_1^k x_2^m
\otimes g_1^tx^{r-m}_2)\gamma_2(x_1^ix_2^j \otimes x_1^k
x_2^{l-m}x_1^tx_2^{s-r+m}) \\
&& = \sum_m \binom{l}{m}_{q} \binom{s}{r-m}_{q^{-1}} q^{(l-m)t} \gamma_2(
x_2^m \otimes g_1^kx^{r-m}_2)\gamma_2(x_1^{i} x_2^{j } \otimes x_1^{k+t}
x_2^{l-m} x_2^{s -r+m}).
\end{eqnarray*}

Clearly this is $0$ unless $i=0$. If $0 < k+t \neq r$, then this expression
is also clearly $0$. If $k+t = r$ then $x_1^{k+t} = a_1(1-g_1^r)$ and since $%
g_1^r$ commutes with $x_2$ and $\gamma_2$ is $H$-bilinear, we have $0$ here
too. Thus the left hand side is $0$ unless $t=k=i=0$ and the right hand side
computation is similar. Thus the computation simplifies to that in
Proposition \ref{pr: gamma(ai)}.
\end{proof}

\begin{corollary}
\label{co: building cocycles} For $i,j = 1,2$ and $i \neq j$, then $\gamma_i
\ast \gamma_j \in Z^2_H(A,K)$.
\end{corollary}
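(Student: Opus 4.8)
The plan is to deduce the statement directly from Corollary \ref{co: beattie}, feeding it the two cocycle facts already established, and then to check that the resulting $2$-cocycle is in fact $H$-bilinear. Corollary \ref{co: beattie} says that if $\beta \in Z^2(A,K)$ and $\gamma \in Z^2(A^\beta,K)$ then $\gamma \ast \beta \in Z^2(A,K)$, so the strategy is to match $\gamma_i \ast \gamma_j$ to the shape $\gamma \ast \beta$ with an appropriate assignment of the two given cocycles.

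Concretely, I would set $\beta := \gamma_j$ and $\gamma := \gamma_i$. By Proposition \ref{pr: gamma(ai)} we have $\gamma_j \in Z_H^2(A,K) \subseteq Z^2(A,K)$, which supplies the first hypothesis of Corollary \ref{co: beattie}. By Corollary \ref{co: gammai cocycle} we have $\gamma_i \in Z_H^2(A^{\gamma_j},K) \subseteq Z^2(A^{\gamma_j},K)$, which supplies the second hypothesis; this is exactly the reason Corollary \ref{co: gammai cocycle} was established first. Applying Corollary \ref{co: beattie} then yields $\gamma_i \ast \gamma_j \in Z^2(A,K)$, i.e.\ $\gamma_i \ast \gamma_j$ is a (not yet known to be $H$-bilinear) unital $2$-cocycle for $A$.

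It remains to upgrade membership in $Z^2(A,K)$ to membership in $Z_H^2(A,K)$, that is, to verify $H$-bilinearity of the convolution product. Since $\sigma$ is a bialgebra homomorphism, $\Delta_A(\sigma(h)a) = \sigma(h_{(1)})a_{(1)} \otimes \sigma(h_{(2)})a_{(2)}$ and similarly on the right. Expanding $(\gamma_i \ast \gamma_j)(\sigma(h)a \otimes a'\sigma(h'))$ by the definition of convolution and using that each factor is $H$-bilinear into $K$, so that $\gamma_k(\sigma(h)a\otimes a'\sigma(h')) = \varepsilon_H(h)\,\gamma_k(a \otimes a')\,\varepsilon_H(h')$ for $k=i,j$, the $H$-parts collapse via the counit and one is left with $\varepsilon_H(h)\,(\gamma_i \ast \gamma_j)(a \otimes a')\,\varepsilon_H(h')$. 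Hence $\gamma_i \ast \gamma_j$ is $H$-bilinear, so $\gamma_i \ast \gamma_j \in Z_H^2(A,K)$.

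The only obstacle is the routine bookkeeping of this last Sweedler computation; there is no hard content, the real work having been front-loaded into Proposition \ref{pr: gamma(ai)} and Corollaries \ref{co: gammai cocycle} and \ref{co: beattie}. The one point to keep straight is the order of the convolution factors: Corollary \ref{co: beattie} produces $\gamma \ast \beta = \gamma_i \ast \gamma_j$ in that order, so the assignment $\beta = \gamma_j$, $\gamma = \gamma_i$ must be retained, though Proposition \ref{pr: gamma(ai)} already records that $\gamma_1$ and $\gamma_2$ commute, so either order in fact gives the same conclusion.
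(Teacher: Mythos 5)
Your proof is correct and is essentially the paper's own argument: the paper likewise takes $\beta=\gamma_j\in Z^2_H(A,K)$ from Proposition \ref{pr: gamma(ai)}, $\gamma=\gamma_i\in Z^2_H(A^{\gamma_j},K)$ from Corollary \ref{co: gammai cocycle}, and concludes via Corollary \ref{co: beattie}. Your extra verification that the convolution $\gamma_i\ast\gamma_j$ is $H$-bilinear is a worthwhile explicit touch, since Corollary \ref{co: beattie} only yields membership in $Z^2(A,K)$; the paper leaves this step implicit, it being contained in the earlier observation that the $H$-bilinear $H$-balanced maps $\mathcal{BB}(A)$ are closed under convolution.
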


\begin{proof}
  By Proposition \ref{pr: gamma(ai)}, $\gamma _{j}\in
Z_{H}^{2}(A,K)$ and by{\ Corollary \ref{co: gammai cocycle} we
have }$\gamma _{i}\in Z_{H}^{2}(A^{\gamma _{j}},K).$  The
statement then follows from Corollary \ref{co: beattie}.
\end{proof}

\vspace{1mm}

Note that the multiplication $m^\prime$ in $A^{\gamma_1 \ast \gamma_2}$ is
the same as that of $A$ except that for $0 < l,m <r $
\begin{equation*}
m^\prime(x_1^lx_2^m \otimes x_1^{r-l}x_2^{r-m}) =
q^{-lm}a_1a_2(1-g_1^r)(1-g_2^r).
\end{equation*}

Now we consider a cocycle which twists the multiplication of $x_1$ and $x_2$.

\begin{proposition}
\label{gamma a} Let $A = R \# H$ as above with $R = \mathcal{B}(V)$, $H =
K[\Gamma]$. Define the $H$-bilinear map $\gamma_a$ from $A \otimes A$ to $K$
as follows: $\gamma_a= \varepsilon$ on all $x_1^lx_2^m \otimes x_1^n x_2^t$
except that
\begin{equation}
\gamma_a(x_2^m \otimes x_1^m) = (m)!_q a^m,
\end{equation}
and $\gamma_a$ is then extended to all of $A \otimes A$ by $H$-bilinearity.
Then $\gamma_a \in Z^2_H(A^{\gamma_1 \ast \gamma_2},K)$.
\end{proposition}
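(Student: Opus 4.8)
The plan is to verify directly the four requirements for membership in $Z_{H}^{2}(A^{\beta},K)$, where $\beta:=\gamma_{1}\ast\gamma_{2}\in Z_{H}^{2}(A,K)$ by Corollary \ref{co: building cocycles}. By construction $\gamma_{a}$ is $H$-bilinear, and since $\gamma_{a}$ agrees with $\varepsilon_{A\otimes A}$ on the coradical $K[\Gamma]\otimes K[\Gamma]$ of $A\otimes A$, it is convolution invertible (cf. Remark \ref{rem: v e v^-1 lin}) and satisfies the normalization \eqref{form: cocycle3}. Hence the whole content is the cocycle identity \eqref{form: cocycle1 Hopf} for the deformed bialgebra $A^{\beta}$, whose multiplication $\cdot_{\beta}$ agrees with that of $A$ except that $x_{i}^{r}=a_{i}(1-g_{i}^{r})$ and $m'(x_{1}^{l}x_{2}^{m}\otimes x_{1}^{r-l}x_{2}^{r-m})=q^{-lm}a_{1}a_{2}(1-g_{1}^{r})(1-g_{2}^{r})$. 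First I would use the $H$-bilinearity of $\gamma_{a}$, together with the commutation rules $g_{j}x_{i}=\chi_{i}(g_{j})x_{i}g_{j}$, to move group elements to the far left of the first and the far right of the second tensorand (absorbing them with $\varepsilon$ and picking up characters $\chi_{i}(g_{j})$); this reduces \eqref{form: cocycle1 Hopf} to the PBW triples $X=x_{1}^{i_{1}}x_{2}^{j_{1}}$, $Y=x_{1}^{i_{2}}x_{2}^{j_{2}}$, $Z=x_{1}^{i_{3}}x_{2}^{j_{3}}$ and reduces every evaluation of $\gamma_{a}$ to the basic values $\gamma_{a}(x_{2}^{m}\otimes x_{1}^{m})=(m)!_{q}a^{m}$.

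The next step is a case split dictated by the support of $\gamma_{a}$. Since $\gamma_{a}(X\otimes -)$ vanishes unless $X$ is a power of $x_{2}$, and $\gamma_{a}(-\otimes Z)$ vanishes unless $Z$ is a power of $x_{1}$, the left-hand side of \eqref{form: cocycle1 Hopf} is $0$ unless $i_{1}=0$ and the right-hand side is $0$ unless $j_{3}=0$. In the off-diagonal situations I would show the other side also vanishes: when $i_{1}\neq0$, the factor $x_{1}^{i_{1}}$ is forced into $X_{(2)}$ by the coproduct, and in the product $X_{(2)}\cdot_{\beta}Y_{(2)}$ it either survives as a nonzero power of $x_{1}$ (so $\gamma_{a}$ kills the term by its support) or, when the $x_{1}$-degree reaches $r$, the deformation produces a factor $(1-g_{1}^{r})$. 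Here $\gamma_{a}\big((1-g_{1}^{r})u\otimes v\big)=0$, because $\varepsilon_{H}(g_{1}^{r})=1$ by left $H$-linearity; and the analogous mid-expression wrap-around terms vanish using in addition $q^{r}=1$. This $(1-g_{i}^{r})$-mechanism is the structural reason the vanishing cases work, and it is the first place one must be careful.

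It remains to treat the diagonal case $i_{1}=0$, $j_{3}=0$, i.e. $X=x_{2}^{j_{1}}$, $Y=x_{1}^{i_{2}}x_{2}^{j_{2}}$, $Z=x_{1}^{i_{3}}$. Here I would expand $\Delta$ by the quantum binomial theorem, taking base $q^{-1}$ for the powers of $x_{1}$ and base $q$ for the powers of $x_{2}$. The requirement that the inner $\gamma_{a}$ see a pure power of $x_{1}$ (resp. $x_{2}$) forces all the splitting indices, so that each side collapses to a single term which is nonzero exactly when $i_{2}+i_{3}=j_{1}+j_{2}$. Evaluating, the left-hand side becomes $\binom{i_{3}}{j_{2}}_{q^{-1}}q^{j_{2}(i_{3}-j_{2})}(j_{2})!_{q}(j_{1})!_{q}a^{j_{1}+j_{2}}$ and the right-hand side $\binom{j_{1}}{i_{3}-j_{2}}_{q}(i_{2})!_{q}(i_{3})!_{q}a^{j_{1}+j_{2}}$, with $i_{2}=j_{1}+j_{2}-i_{3}$; the two agree after using \eqref{eq: qminus1} to convert the $q^{-1}$-binomial, whereupon the spurious powers of $q$ produced when commuting $x_{2}$ past the group elements cancel exactly.

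I expect the main obstacle to be precisely this bookkeeping of powers of $q$: keeping straight the two different binomial bases in the coproduct and the commutation factors $\chi_{i}(g_{j})$ generated when normalizing the arguments of $\gamma_{a}$, together with the systematic use of the $(1-g_{i}^{r})$ vanishing (with $\varepsilon_{H}(g_{i}^{r})=1$ and $q^{r}=1$) to dispatch every wrap-around term. Once these are in hand, \eqref{form: cocycle1 Hopf} holds for $A^{\beta}$, and therefore $\gamma_{a}\in Z_{H}^{2}(A^{\gamma_{1}\ast\gamma_{2}},K)$.
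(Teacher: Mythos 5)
Your proposal is correct and follows essentially the same route as the paper's proof: verify \eqref{form: cocycle1 Hopf} for $A^{\gamma_1\ast\gamma_2}$ on PBW triples, use the support of $\gamma_a$ together with $H$-bilinearity (which annihilates every wrap-around term carrying a factor $(1-g_i^r)$) to force both sides to vanish except on triples of the form $x_2^{j},\, x_1^{k}x_2^{l},\, x_1^{t}$ with $k+t=j+l$, and finish with a $q$-binomial identity. Your closing identity is exactly the paper's $\binom{t}{l}_q (l)!_q\, (j)!_q=\binom{j}{k}_q (k)!_q\, (t)!_q$ once you apply \eqref{eq: qminus1} and the symmetry $\binom{j}{k}_q=\binom{j}{j-k}_q$, so the two arguments coincide in substance.
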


\begin{proof}
Let $\beta$ denote $\gamma_1 \ast \gamma_2 = \gamma_2 \ast \gamma_1$. We
check that $\gamma_a \in Z^2_H(A^\beta,K)$ by applying the left and right
hand sides of equation (\ref{form: cocycle1 Hopf}) to the triple $%
x_1^ix_2^j, x_1^kx_2^l, x_1^tx_2^s$.

The left hand side is equal to:
\begin{eqnarray*}
&& \sum_{m} \binom{l}{m}_q \binom{t}{m}_q \gamma_a(g_1^k x_2^m \otimes
x_1^mg_2^s)\gamma_a(x_1^i x_2^j \otimes x_1^k x_2^{l-m} x_1^{t-m } x_2^s) \\
&& = \delta_{i,0} \sum_{m} \binom{l}{m}_q \binom{t}{m}_q (m)!_q a^m
q^{(l-m)(t-m)}\gamma_a( x_2^j \otimes x_1^k x_1^{t-m }x_2^{l-m} x_2^s).
\end{eqnarray*}
This expression is $0$ unless $s=0$ and $l=m$. If $l-m+s \neq r$ this is
clear, and if $l-m+s = r$, then we have $\gamma_a(x_2^j \otimes x_1^k
x_1^{t-m}(a_2(1-g_2^r)))$ which is $0$ by the $H$-bilinearity of $\gamma_a$.
Thus :
\begin{equation*}
\mathit{lhs} = \delta_{i,0} \delta_{s,0} \delta_{k+t,l+j} \binom{t}{l}_q
(l)!_q (j)!_q a^{j+l}.
\end{equation*}

Similarly the right hand side equals:
\begin{equation*}
\delta_{i,0} \delta_{s,0}\binom{j}{k}_q \gamma_a(x_2^k \otimes
x_1^k)\gamma_a(x_2^{j-k} x_2^l \otimes x_1^t) = \delta_{i,0} \delta_{s,0}
\binom{j}{k}_q \delta_{j-k+l,t} (k)!_q (t)!_q a^{k+t},
\end{equation*}
and it is an easy exercise to see that $\binom{t}{l}_q (l)!_q (j)!_q =
\binom{j}{k}_q (k)!_q (t)!_q$. Thus $\gamma_a \in Z^2_H(A^\beta,K)$.
\end{proof}

We note that the cocycles $\gamma_i$ and $\gamma_a$ do not commute. For
example, consider
\begin{equation*}
\gamma_1 \ast \gamma_a(x_1^{r-1}x_2 \otimes x_1^2) = \binom{2}{1}%
_q\gamma_1(x_1^{r-1}g_2 \otimes x_1)\gamma_a(x_2 \otimes x_1) = q\binom{2}{1}%
_q a_1a,
\end{equation*}
while
\begin{equation*}
\gamma_a \ast \gamma_1(x_1^{r-1}x_2 \otimes x_1^2) = \binom{2}{1}%
_q\gamma_a(g_1^{r-1}x_2 \otimes x_1)\gamma_1(x_1^{r-1} \otimes x_1) = \binom{%
2}{1}_q a_1a.
\end{equation*}
Similar examples show that $\gamma_2$ and $\gamma_a$ do not commute.

\vspace{1mm}

\begin{corollary}
$\gamma_a \in Z^2_H(A,K)$ and $\gamma_a \in Z^2_H(A^{\gamma_i},K)$ for $i =
1,2$.
\end{corollary}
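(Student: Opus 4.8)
The plan is to transfer the cocycle identity for $\gamma_a$ from $A^{\beta}$, where $\beta := \gamma_1 \ast \gamma_2$ and the identity holds by Proposition \ref{gamma a}, to $A$ and to each $A^{\gamma_i}$. Convolution invertibility and the normalization (\ref{form: cocycle3}) for $\gamma_a$ depend only on the coalgebra structure of $A$ (and the counit), which is common to $A$, $A^{\gamma_i}$ and $A^\beta$; and $H$-bilinearity transfers because, by the proof of Lemma \ref{re: splitting}, multiplication by $\sigma(H)$ is unchanged under any cocycle twist, so that $\sigma(h)a$ and $a\sigma(h')$ denote the same elements in all three algebras. Thus $\gamma_a$ is automatically convolution invertible, $H$-bilinear and normalized on $A\otimes A$ and on $A^{\gamma_i}\otimes A^{\gamma_i}$, and it remains only to verify that the cocycle identity (\ref{form: cocycle1 Hopf}) for $\gamma_a$ is the same equation in all three algebras.

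The second step is the elementary remark that $\gamma_a$ annihilates any tensor one of whose legs lies in $\sigma(\ker\varepsilon_H)$: using right (resp. left) $H$-linearity together with (\ref{form: cocycle3}) one obtains $\gamma_a(x \otimes \sigma(h)) = \varepsilon_A(x)\varepsilon_H(h)$ and $\gamma_a(\sigma(h)\otimes z) = \varepsilon_H(h)\varepsilon_A(z)$ for all $h\in H$ and $x,z\in A$, and these vanish whenever $\varepsilon_H(h)=0$.

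The third step compares the three multiplications. In (\ref{form: cocycle1 Hopf}) the multiplication enters only through the products $y_{(2)}z_{(2)}$ and $x_{(2)}y_{(2)}$, each fed into a single slot of $\gamma_a$, while the comultiplication and hence the Sweedler components are identical in all three algebras. By the proof of Lemma \ref{re: splitting}, multiplication by $\sigma(H)$ is unchanged under twisting, so $m_A$, $m_{A^{\gamma_i}}$ and $m_\beta$ can differ only on products of $x$-monomials; there the explicit formulas recorded after Proposition \ref{pr: gamma(ai)} and after Corollary \ref{co: building cocycles} show that every difference is a scalar multiple of $(1-g_i^r)$ or of $(1-g_1^r)(1-g_2^r)$, hence an element of $\sigma(\ker\varepsilon_H)$ (recall $x_i^r=0$ in $A=\mathcal{B}(V)\#H$). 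Replacing one multiplication by another therefore alters $y_{(2)}z_{(2)}$ (resp. $x_{(2)}y_{(2)}$) by an element of $\sigma(\ker\varepsilon_H)$, which $\gamma_a$ sends to $0$ by the second step. Consequently both sides of (\ref{form: cocycle1 Hopf}) for $\gamma_a$ take the same value in $A$, in $A^{\gamma_i}$ and in $A^\beta$; since they agree in $A^\beta$ by Proposition \ref{gamma a}, the identity holds in $A$ and in $A^{\gamma_i}$, giving $\gamma_a\in Z^2_H(A,K)$ and $\gamma_a\in Z^2_H(A^{\gamma_i},K)$.

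The point needing the most care is the claim that the twist differences stay inside $\sigma(\ker\varepsilon_H)$ even when $y_{(2)}$ and $z_{(2)}$ are general PBW basis vectors carrying group-like factors: one must use $\sigma(g)\cdot_\gamma a=\sigma(g)a$ and $a\cdot_\gamma\sigma(g)=a\sigma(g)$ (again Lemma \ref{re: splitting}) to pull the group-like parts outside the product, and the fact that $\sigma(g)\,\sigma(\ker\varepsilon_H)\subseteq\sigma(\ker\varepsilon_H)$ since $\varepsilon_H$ is multiplicative. A less computational but weaker route would apply Corollary \ref{co: beattie} to $\beta\in Z^2_H(A,K)$ (from Corollary \ref{co: building cocycles}) and $\gamma_a\in Z^2_H(A^\beta,K)$; however this yields only $\gamma_a\ast\beta\in Z^2_H(A,K)$ rather than $\gamma_a$ itself, so the direct comparison above is preferable.
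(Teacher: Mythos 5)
There is a genuine gap in your third step, and it matters. Your argument rests on the premise that the multiplication maps $m_A$, $m_{A^{\gamma_i}}$ and $m_{A^{\beta}}$ differ only by elements of $\sigma(\ker\varepsilon_H)$, citing the notes after Proposition \ref{pr: gamma(ai)} and Corollary \ref{co: building cocycles}. Those notes cannot be read as a complete list of where the multiplication maps differ; they record the changed defining relation of the twisted algebra (its presentation as a lifting), not the values of the twisted product on every pair of basis tensors. In fact the products of mixed monomials are also deformed: writing $\gamma_1=\varepsilon_{A\otimes A}+\delta$ and $\gamma_1^{-1}=\varepsilon_{A\otimes A}-\delta$ with $\delta$ supported on the $x_1$-powers, a direct computation with $\Delta(x_1x_2)=g_1g_2\otimes x_1x_2+g_1x_2\otimes x_1+x_1g_2\otimes x_2+x_1x_2\otimes 1$ gives
\begin{equation*}
x_1x_2\cdot_{\gamma_1}x_1^{r-1}-x_1x_2\cdot_A x_1^{r-1}=q^{-1}a_1\,(1-g_1^r)\,x_2,
\end{equation*}
which is nonzero and does \emph{not} lie in $\sigma(H)$, let alone $\sigma(\ker\varepsilon_H)$. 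So your step 2 (vanishing of $\gamma_a$ on tensors with one leg exactly in $\sigma(\ker\varepsilon_H)$) does not kill these differences, and your closing remark about pulling group-like factors out of the product does not address the problem: the obstruction is a factor $(1-g_i^r)$ multiplying a nontrivial monomial such as $x_2$, not a group-like prefactor. The comparison could in principle be repaired — $g_i^r$ is central because $\chi_j^r=\varepsilon$, and $\gamma_a$ is $H$-bilinear and $H$-balanced, so $\gamma_a(u\otimes g_i^r w)=\gamma_a(ug_i^r\otimes w)=\gamma_a(g_i^r u\otimes w)=\gamma_a(u\otimes w)$, whence $\gamma_a$ annihilates anything of the form $(1-g_i^r)w$ in either slot — but then you must also prove that \emph{all} differences between the three multiplications lie in $(1-g_1^r)A+(1-g_2^r)A$, which your proposal neither states nor proves.

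The paper's own proof avoids all of this with a one-line specialization of parameters that your proposal misses: the scalars $a_1,a_2$ are arbitrary throughout Section \ref{sec: qlps}, and $\gamma_a$ does not involve them. Setting $a_1=a_2=0$ makes $\gamma_1=\gamma_2=\varepsilon_{A\otimes A}$, so $A^{\gamma_1\ast\gamma_2}=A$ and Proposition \ref{gamma a} (whose proof never uses $a_i\neq 0$) yields $\gamma_a\in Z^2_H(A,K)$; setting only $a_j=0$ ($j\neq i$) gives $A^{\gamma_1\ast\gamma_2}=A^{\gamma_i}$ and hence $\gamma_a\in Z^2_H(A^{\gamma_i},K)$. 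You correctly observed that Corollary \ref{co: beattie} alone only produces $\gamma_a\ast\beta\in Z^2_H(A,K)$, but the remedy is not the multiplication-comparison you propose; it is this specialization argument.
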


\begin{proof}
Note that the $a_i$ are any scalars. If $a_i = 0$ then $A^{\gamma_i\ast
\gamma_j} = A^{\gamma_j}$ and if $a_1=a_2 =0$ then $A^{\gamma_i\ast
\gamma_j} = A$.
\end{proof}

Note that the cocycles $\gamma_a \in Z^2_H(A,K) $ and $\gamma_i $ (for a
quantum line) were described in \cite[Section 5.3]{Grunenfelder-Mastnak} in
terms of Hochschild cohomology.

\begin{corollary}
 $\alpha:= \gamma_a \ast \gamma_1 \ast \gamma_2 \in Z^2_H(A,K).$

\end{corollary}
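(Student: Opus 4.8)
The plan is to recognize $\alpha=\gamma_a\ast\gamma_1\ast\gamma_2$ as the cocycle produced by twisting $A$ first by $\beta:=\gamma_1\ast\gamma_2$ and then by $\gamma_a$, and to invoke Corollary \ref{co: beattie}, which packages exactly this composition of successive twists. First I would set $\beta:=\gamma_1\ast\gamma_2$. By Corollary \ref{co: building cocycles} we have $\beta\in Z_H^2(A,K)$, so in particular $\beta\in Z^2(A,K)$ and $A^\beta$ is a genuine bialgebra. By Proposition \ref{gamma a}, $\gamma_a\in Z_H^2(A^\beta,K)\subseteq Z^2(A^\beta,K)$, since $\beta=\gamma_1\ast\gamma_2=\gamma_2\ast\gamma_1$.

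Next I would apply Corollary \ref{co: beattie} with this $\beta$ and with $\gamma=\gamma_a$, which yields $\gamma_a\ast\beta\in Z^2(A,K)$. As $\gamma_a\ast\beta=\gamma_a\ast\gamma_1\ast\gamma_2=\alpha$, this already shows that $\alpha$ is a normalized, convolution-invertible $2$-cocycle for $A$ over $K$. The only point requiring care is that Corollary \ref{co: beattie} (and the underlying Lemma \ref{lm: building cocycles}) is phrased for ordinary $K$-bilinear cocycles and so does not by itself deliver $H$-bilinearity.

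To upgrade to $Z_H^2(A,K)$, I would observe that each of $\gamma_a,\gamma_1,\gamma_2$ is $H$-bilinear and that the convolution product of $H$-bilinear maps is again $H$-bilinear. Indeed, since $\sigma$ is a coalgebra homomorphism one has $\Delta_A(\sigma(h)a)=\sigma(h_{(1)})a_{(1)}\otimes\sigma(h_{(2)})a_{(2)}$, and similarly for $a'\sigma(h')$, so for $H$-bilinear $\phi,\psi$,
\begin{equation*}
(\phi\ast\psi)(\sigma(h)a\otimes a'\sigma(h'))=\phi(\sigma(h_{(1)})a_{(1)}\otimes a'_{(1)}\sigma(h'_{(1)}))\,\psi(\sigma(h_{(2)})a_{(2)}\otimes a'_{(2)}\sigma(h'_{(2)}))=\varepsilon_H(h)\varepsilon_H(h')(\phi\ast\psi)(a\otimes a').
\end{equation*}
Hence $\alpha=\gamma_a\ast\gamma_1\ast\gamma_2$ is $H$-bilinear, and combined with $\alpha\in Z^2(A,K)$ this gives $\alpha\in Z_H^2(A,K)$. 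There is no serious obstacle: the substantive work—that $\gamma_1\ast\gamma_2$ is a cocycle for $A$ and that $\gamma_a$ is a cocycle for the twisted bialgebra $A^{\gamma_1\ast\gamma_2}$—has already been done in Corollary \ref{co: building cocycles} and Proposition \ref{gamma a}, so the present statement is a direct application of Corollary \ref{co: beattie} together with the elementary closure of $H$-bilinearity under convolution.
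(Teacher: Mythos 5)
Your proof is correct and follows essentially the same route as the paper: the paper's proof likewise combines Corollary \ref{co: building cocycles} (giving $\gamma_1\ast\gamma_2\in Z^2_H(A,K)$) with Proposition \ref{gamma a} (giving $\gamma_a\in Z^2_H(A^{\gamma_1\ast\gamma_2},K)$) and then invokes Corollary \ref{co: beattie}. Your extra paragraph upgrading from $Z^2(A,K)$ to $Z^2_H(A,K)$ fills in a point the paper leaves implicit (it records elsewhere, in the proposition on $\mathcal{BB}(A)$ and $\mathcal{L}(R)$, that convolution products of $H$-bilinear maps are $H$-bilinear), so it is a welcome but not essentially different addition.
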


\begin{proof}
  By Corollary {\ref{co: building cocycles}, }$\gamma
_{1}\ast \gamma _{2}\in Z_{H}^{2}(A,K)$ and, by Proposition
\ref{gamma a}, $\gamma _{a}\in Z_{H}^{2}(A^{\gamma _{1}\ast \gamma
_{2}},K).$  The statement then follows from  Corollary \ref{co:
beattie}.
\end{proof}

We now describe the cocycle twist of $A^{\alpha }$ of $%
A$. We will need the fact that $\gamma_a^{-1}(x_2 \otimes x_1) =
-a$; this is easy to check.

\begin{proposition}
\label{pr: cocycle} Let $\alpha = \gamma_a \ast \gamma_1 \ast \gamma_2 \in
Z^2_H(A,K)$. Then $A^{\alpha}$ is isomorphic to the lifting $A(a_1,a_2,a)$
of $A$ described in Proposition \ref{pr: lift}.
\end{proposition}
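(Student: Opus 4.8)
The plan is to verify that $A^{\alpha}$ satisfies exactly the defining relations of the lifting $A(a_1,a_2,a)$ of Proposition~\ref{pr: lift}, and then to finish by a presentation-plus-dimension argument. Since $A^{\alpha}=A$ as a coalgebra, $\Gamma$ still consists of grouplikes and each $x_i$ is still $(1,g_i)$-primitive, with $\Delta^2(x_i)=x_i\otimes 1\otimes 1+g_i\otimes x_i\otimes 1+g_i\otimes g_i\otimes x_i$. Because $\alpha\in Z_H^2(A,K)$, products with $\sigma(H)$ are undeformed (Lemma~\ref{re: splitting}), so the relation $hx_i=\chi_i(h)x_ih$ passes to $A^{\alpha}$ unchanged. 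It therefore remains to establish the two families $x_i^{\,\cdot_\alpha r}=a_i(1-g_i^{r})$ and $x_2\cdot_\alpha x_1=q\,(x_1\cdot_\alpha x_2)+a(1-g_1g_2)$, where $\cdot_\alpha$ denotes multiplication in $A^{\alpha}$.

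For the powers I would exploit that $\alpha=\gamma_a\ast\gamma_1\ast\gamma_2$ has small, disjoint support. On any tensor whose two factors are (grouplike multiples of) powers of a single $x_i$, both $\gamma_a$ and $\gamma_j$ ($j\neq i$) equal $\varepsilon$, so $\alpha$ restricts there to $\gamma_i$. Consequently $x_i^{\,\cdot_\alpha k}=x_i^{k}$ for $k<r$ (every $\gamma$-value occurring has total $x_i$-degree $<r$, hence is trivial), while $x_i\cdot_\alpha x_i^{r-1}=x_i\cdot_{\gamma_i}x_i^{r-1}$ is precisely the product $m_i(x_i\otimes x_i^{r-1})=a_i(1-g_i^{r})$ recorded after Proposition~\ref{pr: gamma(ai)} (using $x_i^{r}=0$ in $\mathcal B(V)$). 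Thus $x_i^{\,\cdot_\alpha r}=a_i(1-g_i^{r})$.

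For the commutation relation I would compute $x_2\cdot_\alpha x_1$ directly from the definition $x\cdot_\alpha y=\alpha(x_{(1)}\otimes y_{(1)})\,x_{(2)}y_{(2)}\,\alpha^{-1}(x_{(3)}\otimes y_{(3)})$. The only special value of $\alpha$ reachable by the relevant pairings is $\alpha(x_2\otimes x_1)=\gamma_a(x_2\otimes x_1)=a$ (the $\gamma_1,\gamma_2$-contributions collapse to $\varepsilon$ because one factor always carries an $x_2$ and the other an $x_1$), and likewise $\alpha^{-1}(x_2\otimes x_1)=\gamma_a^{-1}(x_2\otimes x_1)=-a$. Expanding the nine terms of $\Delta^2(x_2)\otimes\Delta^2(x_1)$, only three survive: the $(x_2\otimes x_1\mid 1\otimes 1\mid 1\otimes 1)$ term gives $a$, the $(g_2\otimes g_1\mid x_2\otimes x_1\mid 1\otimes 1)$ term gives $x_2x_1=q\,x_1x_2$, and the $(g_2\otimes g_1\mid g_2\otimes g_1\mid x_2\otimes x_1)$ term gives $-a\,g_1g_2$ (all remaining terms vanish by $H$-bilinearity, since $\alpha$ and $\alpha^{-1}$ kill any grouplike-times-primitive pairing). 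Hence $x_2\cdot_\alpha x_1=q\,x_1x_2+a(1-g_1g_2)$; the analogous computation for $x_1\cdot_\alpha x_2$ produces no $\gamma_a$-term (its support is $x_2\otimes x_1$, not $x_1\otimes x_2$), so $x_1\cdot_\alpha x_2=x_1x_2$, and combining the two yields the desired relation, matching Example~\ref{ex: qlp}.

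Finally, the relations just verified give a Hopf algebra map $A(a_1,a_2,a)\to A^{\alpha}$ sending each generator to the same-named element; it is surjective because $\Gamma$, $x_1$, $x_2$ generate $A^{\alpha}$ under $\cdot_\alpha$ (indeed $x_1^{\,\cdot_\alpha i}\cdot_\alpha x_2^{\,\cdot_\alpha j}=x_1^{i}x_2^{j}$, again by disjointness of supports), and it is an isomorphism since $\dim_K A^{\alpha}=\dim_K A=r^{2}|\Gamma|=\dim_K A(a_1,a_2,a)$. I expect the main obstacle to be the bookkeeping in the commutation computation: one must justify, using the precise $H$-bilinear values of $\gamma_a$ (and hence of $\alpha$ and $\alpha^{-1}$) on mixed grouplike/primitive tensors, that exactly the three listed terms contribute and that the residual $\gamma_1,\gamma_2$-factors really collapse to $\varepsilon$ on every low-degree term that appears; the power relations are comparatively safe once the single-generator supports isolate $\gamma_i$, though one should still check that no stray $\gamma_a$- or $\gamma_j$-term intervenes.
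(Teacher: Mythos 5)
Your proposal is correct and follows essentially the same route as the paper: the paper likewise computes $x_i^n\cdot_\alpha x_i^m=x_i^{n+m}$ for $n+m<r$, $x_i^n\cdot_\alpha x_i^m=a_i(1-g_i^r)$ for $n+m=r$, $x_1^n\cdot_\alpha x_2^m=x_1^nx_2^m$, and $x_2\cdot_\alpha x_1=\alpha(x_2\otimes x_1)+x_2x_1+g_2g_1\alpha^{-1}(x_2\otimes x_1)=q\,x_1\cdot_\alpha x_2+a(1-g_1g_2)$, exactly as you do. The only difference is cosmetic: where the paper closes tersely with ``since multiplication is associative, this completes the proof,'' you spell out the implicit generation-plus-dimension argument identifying $A^\alpha$ with the presented lifting $A(a_1,a_2,a)$, which is a sound (and welcome) elaboration rather than a different method.
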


\begin{proof}
We must describe the multiplication $\cdot_{\alpha}$ in the Hopf algebra $%
A^{\alpha}$. Note that $x_i^n \cdot_\alpha x_i^m = x_i^{n+m}$ for $n+m <r$
since each of the cocycles $\gamma_i$ and $\gamma_a$, and their inverses,
are $0$ on $x_i^k \otimes x_i^l$ when $j+l<r$. If $n+m = r$ then
\begin{equation*}
x_i^n \cdot_\alpha x_i^m = \alpha(g_i^n \otimes g_i^m)x_i^r \alpha^{-1}(1
\otimes 1) + \alpha(x_i^n \otimes x_i^m) + g_i^r \alpha^{-1}(x_i^n \otimes
x^m_i) = a_i(1 - g_i^r).
\end{equation*}
Now note that by the definition of $\alpha$, then $x_1^n \cdot_\alpha x_2^m
= x_1^n x_2^m$. However
\begin{equation*}
x_2 \cdot_\alpha x_1 = \alpha(x_2 \otimes x_1) + x_2x_1 + g_2g_1
\alpha^{-1}(x_2 \otimes x_1) = q x_1 \cdot_\alpha x_2 + a(1 - g_2g_1)
\end{equation*}
as required. Since multiplication is associative, this completes the proof.
\end{proof}

We summarize the action of the cocycle $\alpha$ on $A \otimes A$. For $0 <
i,k,m,n,t <r$ we have

\begin{itemize}
\item[(i)] $\alpha(z \otimes 1) = \alpha(1 \otimes z) = \varepsilon(z)$ for
all $z \in A$.

\item[(ii)] $\alpha(x_i^n \otimes x_i^m ) = \delta_{n+m,r} a_i .$

\item[(iii)] $\alpha (x_1^m \otimes x_2^k) = 0.$

\item[(iv)] $\alpha(x_1^i \otimes x_1^k x_2^m) = 0 = \alpha(x_1^ix_2^k
\otimes x_2^m).$

\item[(v)] $\alpha(x_2^m \otimes x_1^n) = \delta_{n,m} (m)!_qa^m.$

\item[(vi)] $\alpha(x_2^i \otimes x_1^kx_2^m) = \delta_{i+m,r+k}\binom{i}{k}%
_q(k)!_qa^ka_2.$

\item[(vii)] $\alpha (x_1^ix_2^k \otimes x_1^m) = \delta_{i+m,r+k}\binom{m}{k%
}_q(k)!_qa^ka_1.$

\item[(viii)] $\alpha (x_1^ix_2^k \otimes x_1^m x_2^t) =
\delta_{i+m,k+t}(i+m-r)!_q\binom{k}{r-t}_q \binom{m}{r-i}_q
q^{it}a^{i+m-r}a_1a_2.$
\end{itemize}

\begin{example}
\label{ex: dim81} Let us describe $\alpha$ completely for the Hopf algebras
of dimension $81$ which were among the first counterexamples to Kaplansky's
Tenth Conjecture. Here $\Gamma = \langle c \rangle$ is the cyclic group of
order $9$, $g_1 = c = g_2$, $\chi(c)=q$ where $q$ is a primitive cube root
of $1$, $r=3$. By \cite{Mas1}, there exists a cocycle $\alpha$ such that $%
A(a_1,a_2,a) \cong A^\alpha$. Here we supply $\gamma$ explicitly for $a_i$
and $a$ nonzero.

From the preceding computations, we see that $\alpha = \varepsilon$ except
for the following cases:
\begin{eqnarray*}
&& \alpha(x_2 \otimes x_1) = (1)!_qa^1 = a; \\
&& \alpha(x_i \otimes x_i^2)= \alpha(x_i^2 \otimes x_i) = a_i \text{ for }i
= 1,2; \\
&& \alpha(x_2^2 \otimes x_1^2) = (2)!_qa^2 = (1+q)a^2. \\
&& \alpha(x_2^2 \otimes x_1x_2^2) = {\binom{{2}}{{1}}}_q (1)!_q aa_2 =
(2)!_qaa_2 = (1 + q)aa_2; \\
&& \alpha(x_1^2x_2 \otimes x_1^2) = {\binom{{2}}{{1}}}_q (1)!_q aa_1 =
(2)!_qaa_1 = (1 + q)aa_1. \\
&& \alpha(x_1^2x_2^2 \otimes x_1x_2) = (0)!_q {\binom{{2}}{ 2} }_q {\binom{{1%
}}{{1}}}_q q^2 a^0a_1a_2 = q^2a_1a_2 =-(1+q)a_1a_2; \\
&& \alpha(x_1^2x_2 \otimes x_1x_2^2) = (0)!_q {\binom{{1}}{ 1} }_q {\binom{{1%
}}{{1}}}_q q^4 a^0a_1a_2 = q a_1a_2; \\
&& \alpha(x_1x_2^2 \otimes x_1^2x_2) = (0)!_q {\binom{{2}}{ 2} }_q {\binom{{2%
}}{{2}}}_q q^{1} a^0a_1a_2 = qa_1a_2; \\
&& \alpha(x_1 x_2 \otimes x_1^2x_2^2) = (0)!_q {\binom{{1}}{ 1} }_q {\binom{{%
2}}{{2}}}_q q^2 a^0a_1a_2 = q^2a_1a_2 =-(1+q)a_1a_2. \\
&& \alpha(x_1^2x_2^2 \otimes x_1^2x_2^2) = (1)!_q {\binom{{2}}{{1}}}_q {\
\binom{{2}}{{1}}}_q q^4 a a_1a_2 = (2)!_q (2)!_q qaa_1a_2 \\
&& \hspace{3cm} = (1+q)^2qaa_1a_2 =-(1+q)aa_1a_2.
\end{eqnarray*}

In the last case, $i + m = k+t = 4$, and in the four preceding cases, $%
i+m=k+t = r=3$. \qed
\end{example}

We ask whether, in the example above, it is possible to find $\eta$ such
that $\alpha = e^\eta = \varepsilon + \eta + \frac{\eta^{2}}{2!}+\frac{%
\eta^{3}}{3!}+\cdots$. This is the approach of \cite{Grunenfelder-Mastnak}
to construct cocycles where it may be simpler to construct $\eta$. Then one
expects that
\begin{equation*}
\eta=ln\alpha=ln\left(\varepsilon+\left(\alpha-\varepsilon\right)\right)=
\left(\alpha -\varepsilon\right)-\frac{\left(\alpha-\varepsilon\right)^{2}}{2%
}+ \frac{\left(\alpha-\varepsilon\right)^{3}}{3}-\cdots
\end{equation*}
and one checks (using a computer algebra system) that $(\alpha-\varepsilon
)^{3}=0 $ so that $\eta=ln\alpha=ln\left(\varepsilon+\left(\alpha-%
\varepsilon\right)\right)= \left(\alpha -\varepsilon\right)-\frac{%
\left(\alpha-\varepsilon\right)^{2}}{2}$. The map $\eta$ is explicitly given
by the table:

\begin{equation*}
\eta=\left[%
\begin{array}{c|c|c|c|c|c|c|c|c|c}
\eta(u\otimes v) & v=1 & x_{1} & x_{2} & x_{1}x_{2} & x_{1}^{2} & x_{2}^{2}
& x_{1}x_{2}^{2} & x_{1}^{2}x_{2} & x_{1}^{2}x_{2}^{2} \\ \hline
u=1 & 0 & 0 & 0 & 0 & 0 & 0 & 0 & 0 & 0 \\ \hline
x_{1} & 0 & 0 & 0 & 0 & a_{1} & 0 & 0 & 0 & 0 \\ \hline
x_{2} & 0 & a & 0 & 0 & 0 & a_{2} & 0 & 0 & 0 \\ \hline
x_{1}x_{2} & 0 & 0 & 0 & 0 & 0 & 0 & 0 & 0 & 0 \\ \hline
x_{1}^{2} & 0 & a_{1} & 0 & 0 & 0 & 0 & 0 & 0 & 0 \\ \hline
x_{2}^{2} & 0 & 0 & a_{2} & 0 & (1+\frac{q}{2})a^{2} & 0 & (\frac{1}{2}%
+q)aa_{2} & 0 & 0 \\ \hline
x_{1}x_{2}^{2} & 0 & 0 & 0 & 0 & 0 & 0 & 0 & 0 & 0 \\ \hline
x_{1}^{2}x_{2} & 0 & 0 & 0 & 0 & (\frac{1}{2}+q)aa_{1} & 0 & 0 & 0 & 0 \\
\hline
x_{1}^{2}x_{2}^{2} & 0 & 0 & 0 & 0 & 0 & 0 & 0 & 0 & -\frac{1}{2}aa_{1}a_{2}%
\end{array}%
\right]
\end{equation*}

\vspace{2.5mm} and $e^\eta = \alpha$.

\subsection{Pointed Hopf algebras of dimension 32}

In the next example, we study three infinite families of pointed Hopf
algebras of dimension $32$. Let $\Gamma$ be a finite abelian group of order $%
8$ and let $V \in {^\Gamma_\Gamma \mathcal{YD}}$ be a quantum linear space
as in Definition \ref{de: qls} with $r_i = 2$, $i=1,2$. For each of the $%
\Gamma, V$ listed below, the families of pointed Hopf algebras obtained by
lifting the Radford biproduct yield infinite families of non-isomorphic Hopf
algebras \cite[Section 5]{grana}; family $(F2)$ is also mentioned in \cite%
{beattieiso}. We have shown by the above explicit computation of the cocycle
that the pointed Hopf algebras in each of the families of liftings are
isomorphic to a twisting of the Radford biproduct. The same result, without
an explicit formula for the cocycle, can also be found as follows. In view
of \cite[Theorem 3.1]{eg} each element in a family $(Fi)$ is of the form $%
A(G_i,V_i,u_i,B)^*$ for some datum $(G_i,V_i,u_i,B)$. The construction of
the Hopf algebra $A(G_i,V_i,u_i,B)$ can be found at the beginning of \cite[%
Section 2]{eg}: it can be obtained by applying \cite[Theorem 3.1.1]{AEG} to
the datum $((\mathbb{C}[G_i]\ltimes\wedge V_i)^{e^B},u_i)$. Now, by
construction, for each $B,B^{\prime}$ the Hopf superalgebras $(\mathbb{C}%
[G_i]\ltimes\wedge V_i)^{e^B}$ and $(\mathbb{C}[G_i]\ltimes\wedge
V_i)^{e^{B^{\prime}}}$ are twist equivalent by twisting the
comultiplication. Thus, by \cite[Proposition 3.2.1]{AEG}, $A(G_i,V_i,u_i,B)$
and $A(G_i,V_i,u_i,B^{\prime})$ are also twist equivalent by twisting the
comultiplication. Thus their duals are quasi-isomorphic in our sense.

Etingof and Gelaki have shown in \cite[Corollary 4.3]{eg} that the families
of duals contain infinitely many quasi-isomorphism classes of Hopf algebras.
\vspace{1mm}

The three families are the pointed Hopf algebras of liftings of biproducts {%
\ corresponding to $\Gamma, V$ as follows: }

\begin{enumerate}
\item[$(F1)$] $\Gamma = C_8 = \langle g \rangle$, the cyclic group of order
8 with generator $g$ and $\eta \in \widehat{\Gamma}$ with $\eta(g) = q$, $q$
a primitive $8$th root of unity. $V = Kx_1 \oplus Kx_2$ where $x_1 \in V^{
\eta^4}_g$ and $x_2 \in V^{ \eta^4}_{g^5}$.

\item[$(F2)$] $\Gamma = C_8 = \langle g\rangle$, and $\eta \in \hat{\Gamma}$
as above. $V = Kx_1 \oplus Kx_2$ where $x_1 \in V^{ \eta^4}_g$ and $x_2 \in
V^{ \eta^4}_{g^3}$.

\item[$(F3)$] $\Gamma = C_2 \times C_4$ where $C_2 = \langle g \rangle$ and $%
C_4 = \langle h \rangle$. Let $\eta \in \hat{\Gamma}$ be defined by $\eta(g)
= 1$, $\eta(h) = q$ where $q$ is a primitive $4^{th}$ root of unity. $V =
Kx_1 \oplus Kx_2$ where $x_1 \in V^{ \eta^2}_h$ and $x_2 \in V^{
\eta^2}_{gh} $.
\end{enumerate}

\vspace{2mm}

\begin{example}
\label{ex: dim32} Let $A = \mathcal{B}(V) \# K[\Gamma]$ for any of the $%
V,\Gamma$ in $(F1)-(F3)$. Let $H$ denote $K[\Gamma]$ and let $a_1,a_2,a \in
K $. Define $\gamma = \gamma(a_1,a_2,a):A \otimes A \longrightarrow K$ by $%
\gamma(x_1^ix_2^j \otimes x_1^kx_2^l) = 0 $ if $i+j \neq k+l$,
\begin{eqnarray*}
&& \gamma (1 \otimes -)= \gamma( - \otimes 1) = \varepsilon; \\
&& \gamma( x_i \otimes x_i)= a_i; \hspace{2mm} \gamma (x_2 \otimes x_1) = a;
\hspace{2mm} \gamma(x_1 \otimes x_2) = 0; \\
&& \gamma (x_1x_2 \otimes x_1x_2) = -a_1a_2,
\end{eqnarray*}
and extend $\gamma$ to $A \otimes A$ by $H$-bilinearity. Then by Proposition %
\ref{pr: cocycle}, $\gamma \in Z^2_H(A ,K)$ and $A^\gamma \cong A(a_1,a_2,a)$%
.

We note that for $\gamma$ to be a cocycle we must have that $\gamma (x_1x_2
\otimes x_1x_2) = -a_1a_2$; to see this, apply (\ref{form: cocycle1 Hopf})
to the triple $x_1,x_2,x_1x_2$. \qed
\end{example}

\begin{remark}
In general, for $\gamma \in Z^2_H(A , K)$, the convolution inverse of $%
\gamma $ will be a cocycle for $A^\gamma$, but need not be a cocycle for $A$%
. In Example \ref{ex: dim32}, however, $\gamma^{-1}$ is also a cocycle for $%
A $ with the scalars $a_i,a$ replaced by their negatives.
\end{remark}

\begin{remarks}
\label{rem: 32}

(i) Let $B := A(a_1,a_2,a)$ be a lifting of the Radford biproduct $A =
\mathcal{B}(V) \# K[\Gamma]$ , from Example \ref{ex: dim32}. Then $B$ gives
a splitting datum $(B \cong R \#_\xi H, H , \pi_B, \sigma)$ where $\sigma$
is inclusion, $H = K[\Gamma] $ and $R = B^{co\pi_B}$ with the projection $%
\pi_B$ as described in Example \ref{ex: pbw}. From the definition of the
cocycle $\xi$ in \ref{sec: A co H}, we have that (taking $H$-bilinearity
into account) $\xi = \varepsilon \otimes \varepsilon$ except for the
following:
\begin{eqnarray*}
&& \xi(x_i \otimes x_i) = a_i(1-g_i^2); \hspace{2mm} \xi(x_2 \otimes x_1) =
a(1-g_1g_2 ); \\
&& \xi(x_1x_2 \otimes x_1x_2) = \pi(x_1(-x_1x_2 + a(1-g_1g_2))x_2 ) =
-a_1a_2(1-g_1^2)(1-g_2^2).
\end{eqnarray*}

By (\ref{form: xi inverse}), the inverse to $\xi$ is given by
\begin{eqnarray*}
&& \xi^{-1} = -\xi \text{ on } R_i \otimes R_j \text{ for } i+j <4 \text{
and } \xi^{-1}(x_1x_2 \otimes x_1x_2) = \xi(x_1x_2 \otimes x_1x_2).
\end{eqnarray*}
Since the image of the cocycle $\xi $ is in the centre of $B$, then $%
R=B^{co\pi_B}$ is associative. \vspace{1mm}

(ii) For the cocycle $\gamma$ defined in Example \ref{ex: dim32}, $R \#_\xi
H \cong \mathcal{B}(V)^\gamma \#_{\varepsilon_\gamma}H$, so that we have
splitting data $(R \#_\xi H , H, \pi_B,\sigma)$ and $( \mathcal{B}(V)^\gamma
\#_{\varepsilon_\gamma}H, H, \pi_A, \sigma)$. Since $\gamma^{\pm 1}(x_1
\otimes x_2) = 0$, then $m_A(x_1 \otimes x_2) = m_{A^\gamma}(x_1 \otimes
x_2) $ and $\pi_A = \pi_B$, $\mathcal{B}(V)^\gamma = R$, and $\xi =
\varepsilon_\gamma =\gamma_R \ast (H \otimes \gamma_R^{-1})\rho_{R \otimes
R} $. \vspace{1mm}

Let $\Lambda$ denote the total integral from $H$ to $K$; $\Lambda(g) =
\delta_{g,1}$. Then
\begin{equation*}
\Lambda \circ \xi = \Lambda \circ \varepsilon_\gamma = \Lambda \circ
(\gamma_R \ast (H \otimes \gamma^{-1}) \rho_{R \otimes R}) = (\gamma_R \ast
(\Lambda \otimes \gamma^{-1}) \rho_{R \otimes R}).
\end{equation*}
In family $(F1)$, $\rho_{R \otimes R}(z) = 1 \otimes z $ only when $z = 1
\otimes 1 $. Thus here $\Lambda \circ \pi \circ m_B = \Lambda \circ \xi =
\gamma_R$.

However in families $(F2)$ and $(F3)$, we have $\rho_{R \otimes R}(z) = 1
\otimes z $ for $z = 1 \otimes 1$ and also for $z = x_1x_2 \otimes x_1 x_2$
so that
\begin{equation*}
\Lambda \circ \xi (x_1x_2 \otimes x_1 x_2) = \gamma_R(x_1x_2 \otimes x_1
x_2) + \gamma_R^{-1}(x_1x_2 \otimes x_1 x_2) = 2 \gamma_R(x_1x_2 \otimes x_1
x_2).
\end{equation*}
One can also see this directly by applying $\Lambda$ to $\xi$ as described
above.

(iii) In Example \ref{ex: dim81}, $( \Lambda \otimes \gamma^{-1}) \circ
\rho_{R \otimes R}$ is nonzero only on $1 \otimes 1 $ since $\rho_{R \otimes
R}(x_1^ix_2^j \otimes x_1^kx_2^m) = c^{i+j+k+m} \otimes x_1^ix_2^j \otimes
x_1^kx_2^m$ and $i + j + k + m \leq 8$. Thus $\Lambda \circ \xi = \gamma$. %
\qed
\end{remarks}

Even though $\Lambda \circ \xi$ might not be a cocycle for $R$ above, $%
\Lambda \circ \xi$ is still a left $H$-linear map since $\Lambda$ is
ad-invariant and $\xi$ is left $H$-linear with respect to the adjoint
action. The next lemma will apply to this situation. Coalgebras and the
braiding in the category ${_H^H{\mathcal{YD}}} $ are described in Section %
\ref{sec: ydcoalgebras}.

\begin{lemma}
\label{lm: cocommutative} For $H$ a Hopf algebra, let $R$ be a coalgebra in $%
{_H^H{\mathcal{YD}}}$. Let $c$ be the braiding in ${_H^H{\mathcal{YD}}} $.
Suppose that $x,y$ are elements of $R$ such that

\begin{itemize}
\item[(i)] $c \Delta_R (x)= \Delta_R(x)$ and $c \Delta_R (y)= \Delta_R(y)$,

\item[(ii)] $(R \otimes c^2 \otimes R )(\Delta_R \otimes \Delta_R)(x \otimes
y) = (\Delta_R \otimes \Delta_R)(x \otimes y)$.
\end{itemize}

Let $\omega \in Hom(R \otimes R ,K)$ be left $H$-linear and let $%
\mu:R\otimes R\rightarrow R$ be a linear map. Then
\begin{equation*}
(\omega \ast \mu)( x \otimes y ) = (\mu \ast \omega)( x \otimes y ).
\end{equation*}
\end{lemma}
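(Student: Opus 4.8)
The plan is to recast both sides as $(\omega\otimes\mu)$ applied to the comultiplication of $C:=R\underline{\otimes}R$, and then to peel off the discrepancy between the two orders using the $H$-linearity of $\omega$ and a cocommutativity that (i) and (ii) are designed to guarantee. Write $z=x\otimes y$ and $z^{(1)}\otimes z^{(2)}=\Delta_C(z)$, where by Section \ref{sec: ydcoalgebras}
\begin{equation*}
\Delta_C(x\otimes y)=\left(x^{(1)}\otimes x^{(2)}_{\langle-1\rangle}y^{(1)}\right)\otimes\left(x^{(2)}_{\langle0\rangle}\otimes y^{(2)}\right).
\end{equation*}
Since $\omega$ takes values in the field $K$, scalars commute past elements of $R$, so
\begin{equation*}
(\omega\ast\mu)(x\otimes y)=\omega(z^{(1)})\mu(z^{(2)}),\qquad (\mu\ast\omega)(x\otimes y)=\omega(z^{(2)})\mu(z^{(1)}).
\end{equation*}
Thus the claim is equivalent to $(\omega\otimes\mu)\Delta_C(z)=(\omega\otimes\mu)\kappa\,\Delta_C(z)$, where $\kappa$ denotes the ordinary flip on $C\otimes C$, both expressions being composed with the scalar action $K\otimes R\rightarrow R$.

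The first step is to replace this ordinary flip by the genuine braiding $c_{C,C}$ of the category, at the cost of nothing because $\omega$ is scalar-valued and left $H$-linear. For $u,v\in C$ one has $c_{C,C}(u\otimes v)=u_{\langle-1\rangle}\cdot v\otimes u_{\langle0\rangle}$, and using $\omega(h\cdot v)=\varepsilon_H(h)\omega(v)$ together with the counit property $\varepsilon_H(u_{\langle-1\rangle})u_{\langle0\rangle}=u$ of the coaction,
\begin{equation*}
(\omega\otimes\mu)c_{C,C}(u\otimes v)=\omega(u_{\langle-1\rangle}\cdot v)\,\mu(u_{\langle0\rangle})=\varepsilon_H(u_{\langle-1\rangle})\,\omega(v)\,\mu(u_{\langle0\rangle})=\omega(v)\,\mu(u)=(\omega\otimes\mu)\kappa(u\otimes v).
\end{equation*}
Summing this identity over $u\otimes v=\Delta_C(z)$ reduces the lemma to showing that $x\otimes y$ is cocommutative in $C$, namely $c_{C,C}\Delta_C(x\otimes y)=\Delta_C(x\otimes y)$.

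This cocommutativity is the heart of the matter and the step I expect to be the main obstacle. Here I would expand $c_{C,C}=c_{R\otimes R,R\otimes R}$ into a word of four elementary braidings $c=c_{R,R}$ on adjacent tensorands, using the hexagon identities $c_{R,R\otimes R}=(R\otimes c)(c\otimes R)$ and $c_{R\otimes R,-}=(c\otimes R)(R\otimes c)$, and then resolve the crossings one strand at a time. Hypothesis (i), $c\Delta_R(x)=\Delta_R(x)$ and $c\Delta_R(y)=\Delta_R(y)$, trivializes the crossings internal to the pair $x^{(1)},x^{(2)}$ and to the pair $y^{(1)},y^{(2)}$; the single braiding already present inside $\Delta_C$ (that of $x^{(2)}$ past $y^{(1)}$) then combines with the crossing produced by $c_{C,C}$ into the double braiding of $x^{(2)}$ past $y^{(1)}$, which is exactly the monodromy killed by hypothesis (ii). The bookkeeping must be handled carefully, since at each crossing the Yetter--Drinfeld compatibility $\rho(h\cdot v)=h_{(1)}v_{\langle-1\rangle}S(h_{(3)})\otimes h_{(2)}\cdot v_{\langle0\rangle}$ couples the action and the coaction; for that reason I would carry out the reduction diagrammatically, tracking precisely where (i) and (ii) are consumed, rather than by a single symbolic manipulation. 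Once $c_{C,C}\Delta_C(x\otimes y)=\Delta_C(x\otimes y)$ is in hand, the two convolutions above coincide and the lemma is proved.
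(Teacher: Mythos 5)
Your proposal is correct and takes essentially the same approach as the paper: reduce the claim, via the left $H$-linearity of the scalar-valued $\omega$, to the cocommutativity $c_{C,C}\Delta_C(x\otimes y)=\Delta_C(x\otimes y)$ in $C=R\underline{\otimes}R$, and then obtain that cocommutativity from the decomposition $c_{C,C}=(R\otimes c\otimes R)(c\otimes c)(R\otimes c\otimes R)$, whose inner factor meets the braiding already inside $\Delta_C$ to form the $c^2$ killed by hypothesis (ii), while hypothesis (i) disposes of the remaining $c\otimes c$. The step you deferred as the main obstacle is exactly this short formal composition of maps (it is the paper's entire first display) --- the Yetter--Drinfeld compatibility never needs to be unpacked at the element level --- so your plan goes through verbatim.
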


\begin{proof}
Set $z:=x \otimes y$. Recall that $c_{R\otimes R,R\otimes R}=\left( R\otimes
c\otimes R\right) \left( c\otimes c\right) \left( R\otimes c\otimes R\right)
$ and thus
\begin{eqnarray*}
z_{\left\langle -1\right\rangle }^{\left( 1\right) }\cdot z^{\left( 2\right)
}\otimes z_{\left\langle 0\right\rangle }^{\left( 1\right) } &=&c_{R\otimes
R,R\otimes R} ( z^{\left( 1\right) }\otimes z^{\left( 2\right) } ) \\
&=&\left( R\otimes c\otimes R\right) \left( c\otimes c\right) \left(
R\otimes c\otimes R\right) ( z^{\left( 1\right) }\otimes z^{\left( 2\right)
} ) \\
&=&\left( R\otimes c\otimes R\right) \left( c\otimes c\right) \left(
R\otimes c\otimes R\right) \left( R\otimes c\otimes R\right) \left( \Delta
_{R}\left( x\right) \otimes \Delta _{R}\left( y\right) \right) \\
&=&\left( R\otimes c\otimes R\right) \left( c\otimes c\right) \left( \Delta
_{R}\left( x\right) \otimes \Delta _{R}\left( y\right) \right) \\
&=&\left( R\otimes c\otimes R\right) \left( c\Delta _{R}\left( x\right)
\otimes c\Delta _{R}\left( y\right) \right) \\
&=&\left( R\otimes c\otimes R\right) \left( \Delta _{R}\left( x\right)
\otimes \Delta _{R}\left( y\right) \right) =z^{\left( 1\right) }\otimes
z^{\left( 2\right) }.
\end{eqnarray*}%
Hence%
\begin{eqnarray*}
\left( w\ast \mu \right) \left( z\right) &=&w ( z^{\left( 1\right) } )\mu (
z^{\left( 2\right) } ) =w ( z_{\left\langle -1\right\rangle }^{\left(
1\right) }\cdot z^{ ( 2 ) } ) \mu ( z_{\left\langle 0\right\rangle }^{\left(
1\right) } ) \\
&=&\varepsilon _{H} ( z_{\left\langle -1\right\rangle }^{ ( 1 ) } ) w (
z^{\left( 2\right) } ) \mu ( z_{\left\langle 0\right\rangle }^{\left(
1\right) } ) =w ( z^{\left( 2\right) } ) \mu ( z^{\left( 1\right) } ) \\
&=& \left( \mu \ast w\right) \left( z\right).
\end{eqnarray*}
\end{proof}

For example, if $V= Kx \oplus Ky$ is a quantum linear plane and $R
= \mathcal{B}(V)$, then the conditions of Lemma \ref{lm:
cocommutative} apply to $x,y$ with $\mu = m_R$. In the examples of
dimension $32$ in this section, $c^2$ is the identity on $R \otimes R$ and $%
c \Delta_R = \Delta_R$. If $\omega$ is convolution invertible and left $H$%
-linear, then
\begin{equation*}
\omega \ast m_R \ast \omega^{-1} = m_R.
\end{equation*}
In particular, $\omega$ could be $\Lambda \circ \xi$. \vspace{3mm}

\subsection{Some general remarks}

Given a general splitting datum $(A = R \#_\xi H,H,\pi,\sigma)$, one problem
is to find $\omega \in Z^2_H(A,K)$ such that $(A^\omega,H,\pi,\sigma)$ is a
trivial splitting datum, in other words, such that $\xi_{\omega_R}$ is
trivial.

As in Remarks \ref{rem: 32}, from the definition in Theorem \ref{teo: smash
gamma}, if $\xi_{\omega_R} = \varepsilon$, then:
\begin{eqnarray*}
&&\xi=u_{H}\omega_{R}^{-1}\ast \left( H\otimes \omega_{R}\right) \rho
_{R\otimes R} = u_{H}\omega_{R}^{-1}\ast \Psi(\omega_R),
\end{eqnarray*}
and then for any $f \in \mathrm{{Hom}(H,K)}$,
\begin{equation*}
f \circ \xi = \omega_R^{-1} \ast (f \otimes \omega_R)(\rho_{R \otimes R}),
\end{equation*}
so that
\begin{equation}  \label{eq: lambdaxi}
f \circ \xi =\omega_{R}^{-1} \text{ if and only if } \left( f \otimes
\omega_{R}\right) \rho _{R\otimes R}=\varepsilon _{R}\otimes \varepsilon
_{R}.
\end{equation}

Similarly
\begin{equation}  \label{eq: lambdaxi-1}
f \circ \xi ^{-1}=\omega_{R} \text{ if and only if } \left( f \otimes
\omega_{R}^{-1}\right) \rho _{R\otimes R}=\varepsilon _{R}\otimes
\varepsilon _{R}.
\end{equation}

Even though we know that that $\Psi(\omega_R^{-1}) = \left( H\otimes
\omega_{R}^{-1}\right) \rho _{R\otimes R} $ is the convolution inverse to $%
\Psi(\omega_R ) = \left( H\otimes \omega_{R}\right) \rho _{R\otimes R}$, it
is not clear if the equalities in (\ref{eq: lambdaxi}) and (\ref{eq:
lambdaxi-1}) are equivalent.

\vspace{2mm} If $f$ above is an integral $\lambda$ for $H$, we know from the
examples of dimension 32 that $\lambda \circ \xi$ is not always a cocycle.
Nevertheless, if it is a cocycle, then twisting by $\lambda \circ \xi$
yields a trivial splitting datum.

\begin{proposition}
Let $(A,H,\pi,\sigma)$ be a splitting datum with associated pre-bialgebra $%
(R,\xi)$. Let $\lambda \ $ be a left integral for $H$ in $H^{\ast }.$ Then%
\begin{equation*}
\xi \ast \left[ \left( H\otimes \lambda \xi \right) \rho _{R\otimes R}\right]
=u_{H}\lambda \circ \xi .
\end{equation*}%
For $A = R \#_\xi H$, if $\gamma \in Z^2_H(A,K)$ such that $\lambda \circ
\xi =\gamma _{R}^{-1},$ then $\xi_{\gamma_R}$ is trivial and $\left(
R\#_{\xi }H\right) ^{\gamma } = R^{\gamma _{R}}\#H.$
\end{proposition}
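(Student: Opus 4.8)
The plan is to prove the convolution identity first and then substitute it into the formula for $\xi_{\gamma_R}$ furnished by Theorem \ref{teo: smash gamma}. The first observation is that $\left(H\otimes\lambda\xi\right)\rho_{R\otimes R}$ is exactly $\Psi(\lambda\xi)$ in the notation of Definition \ref{def: Psi}, where $\lambda\xi:=\lambda\circ\xi\in\mathrm{Hom}(R\otimes R,K)$. Writing $C=R\otimes R$ with its coalgebra structure in $_H^H\mathcal{YD}$ and taking $z\in C$, I would compute the convolution in $\mathrm{Hom}(C,H)$:
\[
\left[\xi\ast\Psi(\lambda\xi)\right](z)=\xi(z^{(1)})\,z^{(2)}_{\langle -1\rangle}\,\lambda\xi(z^{(2)}_{\langle 0\rangle}).
\]
The key step is to recognize the right-hand side through the dual Sweedler $1$-cocycle property of $\xi$: condition (\ref{eq: Sweedler 1-cocycle}) says precisely that $\xi(z^{(1)})\,z^{(2)}_{\langle -1\rangle}\otimes\xi(z^{(2)}_{\langle 0\rangle})=\xi(z)_{(1)}\otimes\xi(z)_{(2)}=\Delta_H(\xi(z))$. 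Applying $H\otimes\lambda$ to this equality collapses the displayed expression to $\xi(z)_{(1)}\lambda(\xi(z)_{(2)})$.

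At this point I would invoke the defining property of the left integral $\lambda$. From $f\ast\lambda=f(1_H)\lambda$ for all $f\in H^\ast$, pairing against an arbitrary $f$ and using that $H^\ast$ separates points gives $h_{(1)}\lambda(h_{(2)})=\lambda(h)1_H$ for every $h\in H$. Taking $h=\xi(z)$ yields $\xi(z)_{(1)}\lambda(\xi(z)_{(2)})=\lambda(\xi(z))1_H=(u_H\lambda\xi)(z)$, which is exactly the asserted identity $\xi\ast\left[(H\otimes\lambda\xi)\rho_{R\otimes R}\right]=u_H\lambda\circ\xi$.

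For the second statement, suppose $\gamma\in Z^2_H(A,K)$ with $\lambda\circ\xi=\gamma_R^{-1}$. By Theorem \ref{teo: smash gamma}, $\xi_{\gamma_R}=u_H\gamma_R\ast\xi\ast\Psi(\gamma_R^{-1})$. Since $\gamma_R^{-1}=\lambda\xi$, the last two factors are $\xi\ast\Psi(\lambda\xi)$, which by the first part equals $u_H\lambda\xi=u_H\gamma_R^{-1}$. Hence $\xi_{\gamma_R}=u_H\gamma_R\ast u_H\gamma_R^{-1}$; as both factors take values in $K1_H$, this convolution equals $u_H(\gamma_R\ast\gamma_R^{-1})=u_H\varepsilon_{R\otimes R}$, so $\xi_{\gamma_R}$ is the trivial cocycle $\varepsilon 1_H$ of (\ref{eq:YD10'}). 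Feeding $\xi_{\gamma_R}=u_H\varepsilon$ back into the isomorphism $A^\gamma\cong R^{\gamma_R}\#_{\xi_{\gamma_R}}H$ of Theorem \ref{teo: smash gamma}, the smash product with trivial cocycle is the ordinary Radford biproduct, whence $\left(R\#_\xi H\right)^\gamma=R^{\gamma_R}\#H$.

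The computations are all routine once the pieces are aligned; the only point requiring genuine care is the bookkeeping of the Yetter--Drinfeld comodule legs in the first display, and, relatedly, ensuring the integral identity is used in the form $h_{(1)}\lambda(h_{(2)})=\lambda(h)1_H$ appropriate to a \emph{left} integral rather than its right-integral mirror. I do not anticipate any real obstacle beyond this index management, since the Sweedler $1$-cocycle condition does all the essential work in converting the comodule structure into the comultiplication of $\xi(z)$.
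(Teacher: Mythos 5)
Your proof is correct and follows essentially the same route as the paper's: expand the convolution, use the dual Sweedler $1$-cocycle condition \eqref{eq: Sweedler 1-cocycle} to convert the comodule legs into $\Delta_H\xi$, and apply the left-integral identity $h_{(1)}\lambda(h_{(2)})=\lambda(h)1_H$ to collapse the result to $u_H\lambda\xi$. The second part — substituting $\gamma_R^{-1}=\lambda\xi$ into the formula for $\xi_{\gamma_R}$ from Theorem \ref{teo: smash gamma} and cancelling $u_H\gamma_R\ast u_H\gamma_R^{-1}=u_H\varepsilon_{R\otimes R}$ — is also exactly the paper's argument.
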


\begin{proof}
We have%
\begin{equation*}
\xi \ast \left[ \left( H\otimes \lambda \xi \right) \rho _{R\otimes R}\right]
=(m_{H}\otimes \lambda \xi )(\xi \otimes \rho _{R\otimes R})\Delta
_{R\otimes R}\overset{\left( \ref{eq: Sweedler 1-cocycle}\right) }{=}\left(
H\otimes \lambda \right) \Delta _{H}\xi =u_{H}\lambda \xi .
\end{equation*}%
If $\gamma \in Z^2_H(A,K)$ such that $\lambda \xi =\gamma _{R}^{-1}$, then $%
\xi \ast \left( H\otimes \gamma _{R}^{-1}\right) \rho _{R\otimes R}=\xi \ast
\left( H\otimes \lambda \xi \right) \rho _{R\otimes R}=u_{H}\lambda \xi
=u_{H}\gamma _{R}^{-1}$ so that
\begin{equation*}
\xi_{\gamma_R}=u_{H}\gamma _{R}\ast \xi \ast \left( H\otimes \gamma
_{R}^{-1}\right) \rho _{R\otimes R}=u_{H}\gamma _{R}\ast u_{H}\gamma
_{R}^{-1}=u_{H}\varepsilon _{R\otimes R}
\end{equation*}
\end{proof}

The properties of $\lambda\xi$ will be investigated in a forthcoming paper.
\vspace{2mm}

\newpage

\begin{center}
\ Acknowledgement: \\[0pt]
\textit{Our thanks to the referee for his or her careful reading
of this paper.}
\end{center}

\vspace{3mm}

\end{document}